\definecolor{darkgreen}{RGB}{0,160,0}
\newcommand{\C}{\mathcal{C}}
\newcommand{\R}{\mathbb{R}}
\newcommand{\E}{\mathbb{E}}
\newcommand{\N}{\mathbb{N}}
\newcommand{\1}{\mathbf{1}}
\newcommand{\Var}{\operatorname{Var}}
\renewcommand{\hat}{\widehat}
\renewcommand{\tilde}{\widetilde}
\renewcommand{\d}{\ensuremath {\,\mathrm{d}}}
\renewcommand{\vec}[1]{{\boldsymbol{#1}}}
\renewcommand{\P}{\mathbb{P}}
\newtheorem{lemma}{Lemma}
\newtheorem{proposition}{Proposition}
\newtheorem{remark}{Remark}
\title{{\bf Adaptive Compressed Sensing for Support Recovery of Structured Sparse Sets}}
\author{Rui~M.~Castro and Ervin~T\'{a}nczos%
\thanks{The authors are with the Department of Mathematics, Eindhoven University of Technology, 5600 MB Eindhoven,
The Netherlands (email \texttt{\url{e.t.tanczos@tue.nl}} and \texttt{\url{rmcastro@tue.nl}}).  This work was partially supported by NWO Grant 613.001.114.}}
\begin{document}

\maketitle

\begin{abstract}
This paper investigates the problem of recovering the support of structured signals via adaptive compressive sensing.  We examine several classes of structured support sets, and characterize the fundamental limits of accurately recovering such sets through compressive measurements, while simultaneously providing adaptive support recovery protocols that perform near optimally for these classes.  We show that by adaptively designing the sensing matrix we can attain significant performance gains over non-adaptive protocols.  These gains arise from the fact that adaptive sensing can: (i) better mitigate the effects of noise, and (ii) better capitalize on the structure of the support sets.
\end{abstract}


\section{Introduction} \label{sec:intro}

Compressive sensing provides an efficient way to estimate signals that have a sparse representation in some basis or frame \cite{CS_candes_2006}, \cite{CS_Donoho_2006}, \cite{Dantzig_Candes_2007}, \cite{IntroCS_Candes_2008}, \cite{Lasso_Wainwright_2009}.  If the measurements can be chosen in a sequential and adaptive fashion it is possible to achieve further performance gains in the sense that weaker signals can be estimated more accurately than in the non-adaptive setting \cite{AS_Rui_2012},\cite{ACS_Malloy_2012}.  Furthermore, in some situations the signal may have additional structure that can be exploited.  For instance, in gene expression studies the signals of interest are supported on a submatrix of the gene-expression matrix, and are not arbitrary sparse signals.  In network monitoring anomalous behavior may ``radiate'' from infected nodes creating star-shaped patterns in the network graph.  The natural question that arises is if further performance gains can be realized using this structural information when estimating signals using compressive measurements? Furthermore, can adaptively and sequentially designing the sensing actions provide further performance gains over non-adaptive schemes? The answer to both questions is essentially affirmative, and this work quantifies such gains in a general way.

\textbf{Contributions.} In this work we investigate the problem of recovering the support of structured sparse signals using adaptive compressive measurements.  Our focus is on the performance gains one can achieve when adaptively designing the sensing matrix compared to the situation where the sensing matrix is constructed non-adaptively.  Furthermore, our aim is to highlight the way in which adaptive compressed sensing can capitalize on structural information.  An appealing feature of compressed sensing is that accurate estimation can be done using only a few measurements.  With this in mind we design algorithms for this problem that are sample-efficient, in the sense that they collect a number of observations that is not larger than the sample complexity of the best non-adaptive strategies.

The classes of structured support sets under consideration in this paper are
\begin{itemize}
\item{\textbf{$s$-sets:} any subset of $\{1,\ldots,n\}$ with size $s$}
\item{\textbf{$s$-intervals:} sets consisting of $s$ consecutive elements of $\{1,\ldots,n\}$}
\item{\textbf{unions of $s$-intervals:} unions of $k$ disjoint $s$-intervals}
\item{\textbf{$s$-stars:} any star of size $s$ in a complete graph (where the edges of the graph are identified with $\{1,\ldots,n\}$)}
\item{\textbf{unions of $s$-stars:} unions of $k$ disjoint $s$-stars}
\item{\textbf{$s$-submatrices:} any submatrix of a given size $s_r \times s_c$ of an $n_r \times n_c$ matrix}
\end{itemize}

We analyze the fundamental limits of recovering support sets for the above classes under non-adaptive and adaptive sensing paradigms.  This is done by showing performance lower bounds for both adaptive and non-adaptive sensing.  We also provide adaptive sensing protocols with near optimal performance to show the tightness of the lower bounds, and to illustrate how adaptive compressed sensing can capitalize on the structure of the support sets in the estimation.  Finally, we provide procedures that next to being near optimal in a statistical sense also perform estimation using only a small number of measurements and are thus appealing from a practical point of view.

Note that, while adaptive compressive measurements might be very advantageous from a statistical and computational point of view, they also require a flexible infrastructure and hardware.  In some settings, like that of the \emph{single-pixel} camera \cite{Single_pixel_2008}, all the necessary infrastructure is already in place.  In tomography and magnetic resonance imaging the use of adaptive compressive samples is also possible, as described in \cite{Deutsch_2009, Panych_1994}.  It is important to note that in the latter settings one has additional physical constraints that need to be accounted for.  Other motivating examples include applications in sensor networks and monitoring, for instance identifying viruses in human or computer networks, or gene-expression studies, for instance when we have a group of genes co-expressed under the influence of a drug, or we have patients exhibiting similar symptoms \cite{Yoon_2005, Moore_2010}.  The results in this paper are foundational in nature, and aim at understanding the draws and limitations of adaptive compressive sensing in the context of structured support recovery.  Furthermore, our model fits the case where ``compression'' happens in the physical domain and before sensing takes place (this fits settings in \cite{Single_pixel_2008,Deutsch_2009, Panych_1994}).  It is important to note that if the sensing is further constrained (so that the measurement vectors cannot be arbitrary) then the performance of any algorithm will be affected.  For a discussion on how such constraints can effect the performance of adaptive compressive sensing see e.g.  \cite{Constrained_Davenport_2015}.

\begin{table}[h]
\caption{Summary of scaling laws for the signal magnitude.}
\label{table}
\smaller
\begin{center}
\begin{tabular}{l|c|c|c|}
& Non-Adaptive Sensing & \multicolumn{2}{c|}{Adaptive Sensing}\\
& (necessary) & (necessary) & (sufficient)\\
\hline\hline
$s$-sets & $\sqrt{\frac{n}{m} \log n}$ & $\Big.  \sqrt{\frac{n}{m}\log s}$ & $\sqrt{\frac{n}{m}\log s}$ \\
\hline
unions of $k$ disjoint $s$-intervals & $\Big.  \frac{1}{s} \sqrt{\frac{n}{m}\log \frac{n}{ks}}$ & $\frac{1}{s} \sqrt{\frac{n}{m}\log ks}$ & $\frac{1}{s} \sqrt{\frac{n}{m}\log ks}$ \\
\hline
unions of $k$ disjoint $s$-stars & $\Big.  \sqrt{\frac{n}{m}\log \frac{\sqrt{n}}{ks}}$ & $\frac{1}{s} \sqrt{\frac{n}{m}\log ks}$ & $\Big.  \frac{1}{s} \sqrt{\frac{n}{m}\log ks}$ \\
\hline
$\sqrt{s} \times \sqrt{s}$ submatrices of & & &\\
an $\sqrt{n}\times\sqrt{n}$ matrix & $\Big.  \sqrt{\frac{n}{\sqrt{s}m}\log \frac{n}{s}}$ & $\frac{1}{s} \sqrt{\frac{n}{m}\log s}$ & $\frac{1}{s^{3/4}} \sqrt{\frac{n}{m}\log s}$ \\
\end{tabular}
\end{center}
Scaling laws for the signal magnitude $\mu$ (constants omitted) which are necessary/sufficient for $\max_{S\in\C} \E(\hat S\triangle S)\to 0$ as $n\to\infty$, where $\C$ denotes the corresponding class of support sets.  The results in the last column make some sparsity assumptions, meaning $s\ll n$.  For exact conditions see relevant propositions of Section~\ref{sec:sigstrength_proc}.  The results presented for $s$-sets are known (see for instance \cite{Fundamental_Limits_AS_Candes_2013, ACS_Malloy_2012, AS_Rui_2012}) and are presented for comparison purposes.  Furthermore, considering $k$ disjoint $s$-intervals, the sufficient condition can be derived using the algorithm of \cite{Singh_Graph_ACS_2013}, and the necessary condition for adaptive sensing was already derived in \cite{Balakrishnan_2012} for the case $k=1$.  Finally, the necessary and sufficient conditions for adaptive sensing in the case of submatrices do not match, and the necessary condition stated above is the one derived in \cite{Balakrishnan_2012}.
\end{table}
\vspace{10pt}

Table~\ref{table} summarizes some of our results, showing necessary and sufficient conditions for the signal magnitude for accurate support estimation.  It also highlights two different facets of the gains of adaptive sensing over non-adaptive sensing.  First, note that the necessary conditions of non-adaptive sensing include a $\sqrt{\log n}$ factor for each of the classes under consideration.  This factor is replaced by the logarithm of the sparsity when considering adaptive sensing, and this is due to the fact that adaptive strategies are better able to mitigate the effects of noise.  Second, for certain classes adaptive sensing can gain greater leverage from the structure of the support sets compared to non-adaptive sensing.  This phenomenon is best visible considering the class of $s$-stars, where estimators using non-adaptive sensing gain practically nothing from the structural information whereas adaptive sensing benefits greatly from it.  Note that the necessary and sufficient conditions for the class of submatrices using adaptive sensing do not match, and a full characterization of the problem in that case remains open.  We also remark at this point that the results derived in this paper are non-asymptotic in nature and also account for the constant factors in the scaling laws.  The asymptotic presentation in Table~\ref{table} merely makes it easier to highlight the main contributions of the work.

\textbf{Related work.} The current work is built on a number of recent contributions on detection and estimation of sparse signals using compressive sensing.  Considering general sparse signals without structure \cite{Fundamental_Limits_AS_Candes_2013} and \cite{AS_Rui_2012} provide theoretical performance limits of adaptive compressive sensing, characterizing the gains one can realize when adaptively designing the sensing matrix.  Complementing these results, \cite{DistilledCS_Haupt_2009,BinSearch_Malloy_2012} and \cite{ACS_Malloy_2012} provide efficient near optimal procedures for estimation.  Considering the problem of detection \cite{CSDetection_AriasCastro_2012} provides both theoretical limits and optimal procedures both in the non-adaptive and adaptive compressed sensing settings.

The problem of estimating structured sparse signals was examined in the past in a multitude of different settings.  In the normal means model several graph structures were considered in \cite{Trail_Arias_2008,Comb_Testing_Lugosi_2010,Ery_ClusterDet_2011,Ery_CommunityDet_2013,Saligrama_2014,Sharpnack_Changepoint_2012}, such as connected components on a lattice, sub-graphs with a small cut size, tree structures and so on.  In \cite{Butucea_2011} the authors consider estimating a submatrix of a high dimensional matrix in the non-adaptive framework.  All the previously cited work share in common that coordinate-wise observations are considered and that the observations are collected in a non-adaptive manner.

The authors of \cite{AS_structured_Rui_ET_2013} also consider the problem of recovering structured supports using coordinate-wise observations, but in a setting where these are collected in a sequential and adaptive manner.  Therefore this paper can be seen as an extension of \cite{AS_structured_Rui_ET_2013} in that we move from coordinate-wise observations to compressed sensing.  Although some of the techniques and insights can be used from that work, changing the measurement model introduces a number of new challenges to tackle.  In particular the information provided by compressive measurements is very different in nature from that provided by coordinate-wise observations.  This means that structural information is captured in the observations in a different way, which influences both the theoretical limits and the way support recovery procedures need to be designed.  For a more extensive literature review in the setting of coordinate-wise observations we also direct the reader to \cite{AS_structured_Rui_ET_2013}.

Structured support recovery problems have been investigated in the compressive sensing setting as well.  In \cite{ModelBasedCS_Baraniuk_2010} the authors consider recovering tree-structured signals in the non-adaptive framework and show that using structural information enhances the performance of compressive sensing methods.  Recovering tree-structured signals is also the topic of \cite{TreeStructure_Soni_2011} and \cite{Haupt_TreeSparse_2014} but in these works the problem is examined in the adaptive sensing setting.  In these works the authors consider signals in which the activation pattern is a rooted subtree of a given tree and show that one can realize further gains recovering these types of supports by adaptively designing the sensing matrix.  Our work is closely related, but the structured class investigated in \cite{TreeStructure_Soni_2011, Haupt_TreeSparse_2014} is clearly different from the ones listed in Table~\ref{table}.  The work in \cite{Singh_Graph_ACS_2013} considers activation patterns that have low cut-size in an arbitrary (fixed) signal graph and also find that adaptivity enhances the statistical performance of compressive sensing.  Though these types of classes seem more close to the ones investigated in this paper, note that most classes in Table~\ref{table} do not result in a lower cut-size then an arbitrary $s$-sparse set, meaning that these can not be efficiently encoded with the definitions of \cite{Singh_Graph_ACS_2013}.  As an example, arbitrary submatrices in a 2d-lattice have typical cut-size on the order of $s$, the same as any $s$-sparse subset of the 2d-lattice.  Similar comments apply to the other classes considered in this paper as well.  Furthermore in our work we provide much sharper lower bounds than those in \cite{Singh_Graph_ACS_2013}, as we explicitly take into consideration the structural properties of the signal classes. Moving away from graphs, \cite{Balakrishnan_2012} investigates the problem of finding block-structured activations in a signal matrix considering both non-adaptive and adaptive measurements.  \cite{Balakrishnan_2012} reports similar findings to the previous authors, namely that both adaptivity and structural information provide gains in support recovery when dealing with block-activations in a matrix.  Our work extends these results by investigating general structured activations.  Finally, the sample complexity of compressive sensing was studied in \cite{Samplecomp_Saligrama_2013} and \cite{Samplecomp_Saligrama_2014} for the support recovery of general sparse signals in the non-adaptive and adaptive sensing settings respectively.

It is instructive to note a fundamental difference between non-adaptive sensing and adaptive sensing problems.  In non-adaptive sensing support recovery methods can often be computationally demanding or even intractable, a prominent example being submatrix estimation \cite{Butucea_2011, Balakrishnan_Tradeoffs_2011, Complexity_lower_Berthet_2013}.  Contrasting this, adaptive sensing algorithms can solve this problem using polynomial-time algorithms.  Though this might seem surprising, one has to bear in mind that there is a fundamental difference between the two setups.  In fact when using adaptive sensing one already shakes most of the computational burdens by tailoring the sample to facilitate inference.  The bottleneck of such algorithms lies in sample collection, but given a good strategy the sample will contain much less confounders making the inference itself easier computationally.  This, next to increased statistical power, can be another appealing reason for using adaptive sensing methods whenever possible.

The paper is structured as follows.  Section~\ref{sec:setting} describes the problem setting in detail.  In Section~\ref{sec:sigstrength} we provide adaptive sensing procedures for structured support recovery and analyze the theoretical limits of the problem, both under non-adaptive and adaptive sensing paradigms.  In this section we only make a restriction to the sensing power available, but not on the number of projective measurements we are allowed to make.  In Section~\ref{sec:sample} we further restrict the number of measurements.  Finally we provide some concluding remarks in Section~\ref{sec:conc}.



\section{Problem Setting} \label{sec:setting}

In this work we consider the following statistical model.  Let $\vec{x}=(x_1,\ldots,x_n)\in\R^n$ be a vector of the form
\begin{equation}\label{eqn:signal_model}
x_i= \left\{ \begin{array}{ll}
\mu & \textrm{, if $i \in S$}\\
0 & \textrm{, if $i \notin S$}
\end{array}\right.  \ ,
\end{equation}
where $\mu>0$ and $S$ is an unknown element of a class of sets denoted by $\C$.  We refer to $\vec{x}$ as the \emph{signal} and to $S$ as the \emph{support} or \emph{significant/active components} of the signal.  The set $S$ is our main object of interest.  The signal model \eqref{eqn:signal_model} may seem overly restrictive at first because of the fact that each non-zero entry has the same value $\mu$.  However, our lower bounds and the procedures of Section~\ref{sec:sample_proc} can be generalized to signals with active components of arbitrary magnitudes and signs, in which case the value $\mu$ would play the role of the minimal absolute value of the non-zero components.  For sake of simplicity we do not discuss this extension here, but refer the reader to \cite{Comb_Testing_Lugosi_2010}, \cite{CSDetection_AriasCastro_2012}, \cite{ACS_Malloy_2012} for details on how this can be done.

We are allowed to collect multiple measurements of the form
\begin{equation} \label{eqn:measurement}
Y_j = <A_j ,\vec{x}> + W_{j}, \  j=1,2,\ldots\ ,
\end{equation}
where $j$ indexes the $j$th measurement.  Thus each measurement is the inner product of the signal $\vec{x}$ with the vector $A_j \in \mathbb{R}^n$, contaminated by Gaussian noise.  The noise terms $W_{j} \sim \mathcal{N}(0,1)$ are independent and identically distributed (i.i.d.) standard normal random variables, also independent of $\{ A_i \}_{i=1}^{j}$.  Under the adaptive sensing paradigm $A_j$ are allowed to be functions of the past observations $\{Y_i ,A_i \}_{i=1}^{j-1}$.  This model is only interesting if one poses some constraint on the total amount of sensing energy available.  Let $A$ denote the matrix whose $j$th row is $A_j$.  We require
\begin{equation}\label{eqn:budget}
\sup_{S \in \C} \ \E_S \left( \| A \|^2_F \right)\ =\ \E_S\left(\sum_j \|A_j\|^2 \right)\ \leq m\ ,
\end{equation}
where $\|\cdot \|_F$ denotes the Frobenius norm, $m$ is our total energy budget, and $\E_S$ denotes the expectation with respect to the joint distribution of $\{ A_j ,Y_j \}_{j=1,2,\dots}$ when $S \in \C$ is the support set.  It is also possible to consider algorithms satisfying an exact energy constraint as opposed to the expected energy constraint in \eqref{eqn:budget}.  This requires an extension of the arguments in the body of the paper, but yields essentially the same results.  For the sake of completeness we provide such details in Appendix~\ref{app:no_expectations}.


\subsection{Inference Goals} \label{sec:goals}

This work aims at characterizing the difficulty of recovering structured sparse signal supports with adaptive compressive sensing.  We are interested in settings where the class $\C$ contains sets with some sort of structure, for instance the active components of $\vec{x}$ are consecutive.  For the unstructured case, that is, when $\C$ contains every set of a given cardinality, there already exists a lower bound in \cite{AS_Rui_2012}, and a procedure that achieves this lower bound in \cite{ACS_Malloy_2012}.  The main goal of this work is to provide results for the problem of recovering structured sparse sets.

We are interested in two aspects of adaptive compressive sensing.  First, given $n$, $m$, $\varepsilon$ and $\C$ the aim is to characterize the minimal signal strength $\mu$ for which $S$ can be reliably estimated, which means there is an algorithm and sensing strategy such that for a given $\varepsilon>0$, 
\begin{equation} \label{eqn:goal}
\max_{S \in \C} \E_S ( | \hat{S} \triangle S|) \leq \varepsilon \ ,
\end{equation}
where $\hat{S} \triangle S$ is the symmetric set difference.  Furthermore, we aim to construct such an adaptive sensing strategy.  Although the setting above makes sense whenever $\varepsilon \in [0,|S|]$, the problem is only interesting when $\varepsilon$ is small.  Hence we will take $\varepsilon$ as an element of $[0,1]$.  We remark at this point that our main interest lies in the scaling of $\mu$ in terms of the model parameters, but we do not aim to find accurate constants.  With this in mind, the procedures throughout the paper could be improved with more careful and refined analysis.  However, these improvements would only improve constant factors, and so we chose to keep technicalities to a minimum providing a smoother presentation at the price of suboptimal constants.

\begin{remark}
As mentioned in the introduction, this work can be seen as an extension of \cite{AS_structured_Rui_ET_2013} from component-wise sampling to the more general compressive sensing, and it is instructive to briefly discuss the differences between the two setups.  Component-wise observations can be viewed as restricting compressive sensing by requiring each measurement vector $A_i$ to have exactly one non-zero entry (though the problem is set up a bit differently in \cite{AS_structured_Rui_ET_2013} the two are effectively the same).  It is shown in \cite{AS_structured_Rui_ET_2013} that the necessary conditions for support recovery for the classes considered in Table~\ref{table} are as follows: the condition is the same for $s$-sets, while for the other classes the $\tfrac{1}{s}$ term moves inside the square-root if one only allows component-wise observations.  Also, these conditions are sufficient in the case of component-wise samples.

Lying at the heart of the difference between the rates for support recovery between the two setups is the increased detection power of compressive sensing over coordinate-wise sampling.  In a nutshell, detection of a signal is the problem of differentiating two hypotheses: the null being that all signal components are zero and the alternative being that there are $s$ non-zero components somewhere in the signal vector.  \cite{CSDetection_AriasCastro_2012} shows that the necessary and sufficient conditions for detection for compressive sensing is $\tfrac{1}{s}\sqrt{\tfrac{n}{m}}$, whereas \cite{AS_Rui_2012} shows the same for component-wise sampling to be $\sqrt{\tfrac{1}{s}\tfrac{n}{m}}$.  When moving from component-wise sampling to compressive sensing, for certain structured classes it is possible to make use of this increased detection power, which in turn lowers the requirement for the signal magnitude.  This also means algorithms need to be designed with a different mindset when using compressive sensing instead of coordinate-wise sampling.
\end{remark}

Second, given $n$, $m$, $\mu$, $\varepsilon$ and $\C$ we wish to characterize the minimal number of samples needed to ensure \eqref{eqn:goal}.  Considering the unstructured case, we know that non-adaptive procedures need at least $O( s \log \frac{n}{s})$ measurements \cite{Samplecomp_Saligrama_2013} and that this bound is achievable \cite{IntroCS_Candes_2008} (these results apply when the signal strength $\mu$ is close to the threshold of estimability).  On the other hand, to the best knowledge of the authors, an exact characterization of the sample complexity for adaptive procedures is not yet available, though there has been work done on the topic \cite{Samplecomp_Saligrama_2014}.  In that work the authors present a result that states the sample complexity of the problem scales essentially as $s$.  However, it is not clear if that bound is tight.  In Section~\ref{sec:conc} we provide more insight on this question.

In what follows we use the symbol $\1$ to denote both the usual indicator function (e.g., $\1\{i\in S\}$ takes the value 1 if $i\in S$ and zero otherwise), and to denote binary vectors with support $S$.  For instance $\1_S$ denotes an element of $\{0,1\}^n$ for which the entries in $S$ have value 1 and all the other entries have value 0.  Note that to ease distinction of the two the arguments of the functions are in a different place (after the symbol in the first case and in the subscript of the symbol in the second case).  Furthermore, let $\P_S$ denote the joint distribution of $\{ A_j ,Y_j \}_{1,2,\dots}$ when $S \in \C$ is the support set, and $\E_S$ denote the expectation with respect to $\P_S$.

\vspace{0.1cm}



\section{Signal strength} \label{sec:sigstrength}

We now examine the minimal signal strength required to recover structured support sets.  In this setup we are allowed to make a potentially infinite amount of measurements of the form \eqref{eqn:measurement} (provided the budget \eqref{eqn:budget} is satisfied).  Although this might not be reasonable from a practical standpoint, it is a good place to start understanding the fundamental performance limits of adaptive compressing sensing, and we will see in Section~\ref{sec:sample} that the same performance can be attained with a small number of measurements.


\subsection{Procedures} \label{sec:sigstrength_proc}

It is instructive to briefly consider a simple support recovery algorithm for the unstructured case.  When the support set can be any set of a given cardinality and there is no restriction on the number of samples we are allowed to take the situation becomes similar to that of \cite{AS_structured_Rui_ET_2013}, where the authors consider coordinate-wise observations.  A simple procedure in this case is to perform a Sequential Likelihood Ratio Test (SLRT\footnote{In the literature this sequential procedure is also referred to as the Sequential Probability Ratio Test (SPRT) (see e.g., \cite{SLRT_Wald_1945}).  We feel, however, that the use of the term ``likelihood ratio'' is perhaps more appropriate, as in most settings one is computing a ratio between densities and not probabilities.}) for each coordinate separately.  More precisely for every coordinate $i =1,\dots ,n$ collect observations of the form
\[
Y_{i,j} = a x_i + W_j = <a \1_{\{i\}} , \vec{x}> + W_j \ , j=1,\dots ,N_i \ ,
\]
with some fixed $a>0$, where we recall that $\1_{\{i\}}$ is a singleton vector.  The number of observations $N_i$ is random and is given by
\[
N_i = \min \big\{ n \in \N : \ \sum_{j=1}^{n} \log \frac{\d \P_1 (Y_{i,j})}{\d \P_0 (Y_{i,j})} \notin (l,u) \big\} \ ,
\]
where $\P_0$ ($\P_1$) is the distribution of the observations when component $i$ is non-active (active), and $l<0<u$ are the lower and upper stopping boundaries of the SLRT.  Then our estimator $\hat{S}$ will be the collection of components $i$ for which the log-likelihood process above hits the upper stopping boundary $u$.  Considering the test of component $\vec{x}_i$ we have the following.

\begin{lemma} \label{lemma:SLRT}
Set $l = \log \frac{\beta}{1-\alpha}$ and $u=\log \frac{1-\beta}{\alpha}$ with $\alpha ,\beta \in (0,1/2)$, and let the type I and type II error probabilities of the SLRT described above be $\alpha_a$ and $\beta_a$.  Then $\alpha_a \to \alpha$ and $\beta_a \to \beta$ as $a \to 0$.  Furthermore
\[
a^2 \E_0 (N_i) \leq \frac{2}{\mu^2} \left( \alpha \log \frac{\alpha}{1-\beta} + (1-\alpha ) \log \frac{1-\alpha}{\beta} \right) \leq \frac{2}{\mu^2} \log \frac{1}{\beta}
\]
and
\[
a^2 \E_1 (N_i) \leq \frac{2}{\mu^2} \left( \beta \log \frac{\beta}{1-\alpha} + (1-\beta ) \log \frac{1-\beta}{\alpha} \right) \leq \frac{2}{\mu^2} \log \frac{1}{\alpha}
\]
as $a \to 0$.
\end{lemma}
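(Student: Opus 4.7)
The plan is to treat the log-likelihood process as a random walk with small Gaussian increments and apply the classical Wald analysis of the SLRT, with the understanding that the nominal error levels $\alpha,\beta$ are only recovered in the limit $a\to 0$ because the boundary overshoot is then asymptotically negligible. The per-step log-likelihood is
\[
Z_j \ :=\ \log\frac{\d\P_1(Y_{i,j})}{\d\P_0(Y_{i,j})} \ =\ a\mu\,Y_{i,j}-\tfrac{1}{2}a^2\mu^2,
\]
so $\E_0(Z_j)=-\tfrac{1}{2}a^2\mu^2$, $\E_1(Z_j)=\tfrac{1}{2}a^2\mu^2$, and $\Var(Z_j)=a^2\mu^2$ under either hypothesis. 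Writing $L_n=\sum_{j\le n}Z_j$, the stopping time is $N_i=\inf\{n\ge1:L_n\notin(l,u)\}$ and the error probabilities are $\alpha_a=\P_0(L_{N_i}\ge u)$, $\beta_a=\P_1(L_{N_i}\le l)$. The boundary overshoots $\tau_+:=L_{N_i}-u\ge 0$ and $\tau_-:=L_{N_i}-l\le 0$ on the two crossing events are each bounded in absolute value by $|Z_{N_i}|$; since every increment is Gaussian of scale $a\mu$, a routine estimate of the expected maximum of $|Z_1|,\dots,|Z_{N_i}|$ gives $\E_\theta|\tau_\pm|=o(1)$ as $a\to 0$.

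I would pin down $\alpha_a$ and $\beta_a$ via the change-of-measure identity $\E_0(e^{L_{N_i}})=1$ (obtained by optional stopping on the nonnegative $\P_0$-martingale $e^{L_n}$) and its symmetric counterpart $\E_1(e^{-L_{N_i}})=1$. Splitting on which boundary is crossed,
\[
1 \ =\ e^u\,\E_0\bigl(e^{\tau_+}\1\{L_{N_i}\ge u\}\bigr)+e^l\,\E_0\bigl(e^{\tau_-}\1\{L_{N_i}\le l\}\bigr).
\]
As $a\to 0$ the overshoot factors $e^{\tau_\pm}$ tend to $1$ in $L^1(\P_0)$, so along any subsequential limit $\alpha^\ast$ of $\alpha_a$ we obtain $1=e^u\alpha^\ast+e^l(1-\alpha^\ast)$. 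Substituting $e^u=(1-\beta)/\alpha$, $e^l=\beta/(1-\alpha)$ and simplifying (using $(1-\alpha)(1-\beta)-\alpha\beta=1-\alpha-\beta$) forces $\alpha^\ast=\alpha$; the analogous argument under $\P_1$ yields $\beta_a\to\beta$.

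For the expected sample size I would apply Wald's first identity $\E_\theta(L_{N_i})=\E_\theta(N_i)\,\E_\theta(Z_1)$, derived by optional stopping on the martingale $L_n-n\E_\theta(Z_1)$; the integrability $\E_\theta(N_i)<\infty$ is standard given the strict drift. Under $\P_0$ this reads
\[
-\tfrac{a^2\mu^2}{2}\E_0(N_i) \ =\ u\alpha_a+l(1-\alpha_a)+\E_0\bigl(\tau_+\1\{L_{N_i}\ge u\}\bigr)+\E_0\bigl(\tau_-\1\{L_{N_i}\le l\}\bigr).
\]
The first overshoot term is $\ge 0$ and the second has absolute value $o(1)$, so lower-bounding the right-hand side by $u\alpha_a+l(1-\alpha_a)-o(1)$ and rearranging yields, using $\alpha_a\to\alpha$,
\[
a^2\E_0(N_i) \ \le\ \tfrac{2}{\mu^2}\bigl(\alpha\log\tfrac{\alpha}{1-\beta}+(1-\alpha)\log\tfrac{1-\alpha}{\beta}\bigr)
\]
in the limit $a\to 0$. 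The $\P_1$-computation is identical. The cruder bounds $\tfrac{2}{\mu^2}\log\tfrac{1}{\beta}$ and $\tfrac{2}{\mu^2}\log\tfrac{1}{\alpha}$ then follow by dropping the (negative) first summand and using $(1-\alpha)\log\tfrac{1-\alpha}{\beta}\le\log\tfrac{1}{\beta}$.

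The one genuinely non-algebraic step, and the one I would scrutinize most carefully, is the asymptotic control of the overshoots $\tau_\pm$, which underpins both the identification $\alpha_a\to\alpha$, $\beta_a\to\beta$ and the neglect of the overshoot terms in Wald's identity. Because the $Z_j$ are explicitly Gaussian of scale $a\mu$ while the drift is of order $a^2$, this reduces to an elementary tail estimate, but it is the only place where actual probabilistic estimation (as opposed to identity manipulation) is required.
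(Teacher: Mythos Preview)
Your proposal is correct and follows the classical Wald SPRT analysis; the paper itself does not give a proof here but simply points to Proposition~1 of \cite{AS_structured_Rui_ET_2013}, which uses essentially the same machinery (Wald's identity together with an explicit control of the Gaussian overshoot as $a\to 0$). The proof of the closely related Lemma~\ref{lemma:bad_SLRT} in this paper makes the overshoot control explicit via the Gaussian inequality $\E(z_1\mid z_1\ge 0)\ge \E(z_1-c\mid z_1\ge c)$, which is a slightly cleaner alternative to your expected-maximum estimate but leads to the same $O(a)$ bound.
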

\begin{proof}
The proof goes the same way as that of Proposition~1 in \cite{AS_structured_Rui_ET_2013}.
\end{proof}

Using the previous result we can immediately analyze the procedure above.  Set $\alpha = \varepsilon /2n$ and $\beta = \varepsilon /2s$ in the proposition above, and choose $a$ to be arbitrarily small.  Hence $\alpha_a$ and $\beta_a$ will be close to the nominal error probabilities $\alpha$ and $\beta$ and we ensure \eqref{eqn:goal}.  Then using the other part of Lemma~\ref{lemma:SLRT} we can upper bound the expected energy used by the tests.  Summing this over all the tests and using \eqref{eqn:budget} we arrive at the following.

\begin{proposition}\label{prop:vanilla1}
Testing each component $\vec{x}_i , \ i=1,\dots ,n$ as described above yields an estimator satisfying \eqref{eqn:budget} and \eqref{eqn:goal} whenever
\[
\mu \geq \sqrt{\frac{2n}{m} \log \frac{2s}{\varepsilon} + \frac{2s}{m} \log \frac{2n}{\varepsilon}} \ .
\]
\end{proposition}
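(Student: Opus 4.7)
The plan is to combine the two parts of Lemma~\ref{lemma:SLRT} (one controlling the error probabilities, one controlling the expected energy per coordinate) applied to each of the $n$ independent single-coordinate SLRTs. First I would set the nominal levels to $\alpha=\varepsilon/(2n)$ and $\beta=\varepsilon/(2s)$ and view $a>0$ as a free parameter to be sent to zero at the end. Since the per-coordinate tests use disjoint measurements (each sensing vector is a rescaled singleton $a\1_{\{i\}}$) and rely only on the Gaussian noise in the corresponding observations, the tests are mutually independent and
\[
\E_S(|\hat S \triangle S|) \;=\; \sum_{i\notin S} \P_S(i\in\hat S) + \sum_{i\in S} \P_S(i\notin \hat S) \;=\; (n-|S|)\alpha_a + |S|\beta_a.
\]
By Lemma~\ref{lemma:SLRT}, $\alpha_a\to\alpha$ and $\beta_a\to\beta$ as $a\to 0$, so this quantity converges to $(n-s)\alpha + s\beta \le n\alpha + s\beta = \varepsilon$, which yields \eqref{eqn:goal} for all sufficiently small $a$.

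Next I would control the expected sensing energy. Because the sensing vector used at each step of the test for coordinate $i$ has norm $a$, the energy consumed by that test is $a^2 N_i$. Lemma~\ref{lemma:SLRT} gives, in the limit $a\to 0$,
\[
a^2 \E_S(N_i) \;\le\; \tfrac{2}{\mu^2}\log\tfrac{1}{\beta} \text{ if } i\notin S, \qquad a^2 \E_S(N_i) \;\le\; \tfrac{2}{\mu^2}\log\tfrac{1}{\alpha} \text{ if } i\in S.
\]
Summing over coordinates and plugging in the chosen $\alpha,\beta$ yields the bound
\[
\E_S\Bigl(\sum_j \|A_j\|^2\Bigr) \;\le\; \frac{2(n-s)}{\mu^2}\log\frac{2s}{\varepsilon} + \frac{2s}{\mu^2}\log\frac{2n}{\varepsilon} \;\le\; \frac{2n}{\mu^2}\log\frac{2s}{\varepsilon} + \frac{2s}{\mu^2}\log\frac{2n}{\varepsilon},
\]
uniformly over $S\in\C$. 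Requiring this to be at most $m$, as demanded by \eqref{eqn:budget}, is equivalent to the stated lower bound on $\mu$.

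There is no real obstacle here; the only thing to be careful about is the asymptotic nature of Lemma~\ref{lemma:SLRT}, which forces one to take $a\to 0$. The argument either needs to absorb a vanishing slack into an arbitrarily small inflation of the constants, or, alternatively, one can replace $\varepsilon$ by $\varepsilon(1-\eta)$ and $m$ by $m(1-\eta)$ for an arbitrarily small $\eta>0$ and then send $a\to 0$ and $\eta\to 0$; in either case the stated scaling in $\mu$ is recovered verbatim.
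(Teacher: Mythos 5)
Your proposal is correct and follows essentially the same route as the paper: choose $\alpha=\varepsilon/(2n)$, $\beta=\varepsilon/(2s)$, invoke Lemma~\ref{lemma:SLRT} with $a\to 0$ to control both the per-coordinate error probabilities (hence the expected Hamming distance) and the expected energy, then sum over coordinates and impose \eqref{eqn:budget}. Your remark on handling the asymptotic slack in $a$ is a point the paper glosses over, but it does not change the argument.
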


When the support is sparse, the first term dominates the bound above.  This coincides with the lower bound of \cite{AS_Rui_2012} showing that the simple procedure above is near optimal.

\begin{remark}\label{remark:sparsity1}
Note that the lower bound presented in \cite{AS_Rui_2012} is valid for a slightly broader class than the $s$-sets, namely one also has to include $(s-1)-sets$ into the class.  However, the procedure outlined above works without any modifications for this broadened class as well, and so the result of Proposition~\ref{prop:vanilla1} holds for this larger class.  A similar comment applies to all the procedures presented later on: the procedures are presented for classes of a given sparsity for sake of clarity, but the analysis shows that they also work for classes containing sets of slightly different sparsity.  This is important to note as because of technical reasons the some of the lower bounds of Section~\ref{sec:sigstrength_lower} can only deal with such enlarged classes.
\end{remark}

The procedures for recovering structured support sets will be very similar in nature, but slightly modified to take advantage of the structural information.  In particular we know from \cite{CSDetection_AriasCastro_2012} that it is possible to detect the presence of weak signals using compressive sensing.  In order to take advantage of this property our procedures consist of two phases: a \emph{search phase} and a \emph{refinement phase}.  The aim of the search phase is to find the approximate location of the signal using a detection type method, that is identifying a subset of components $\vec{P} \subset \{ 1,\dots ,n\}$ such that $|\vec{P}| \ll n$ and $S \subset \vec{P}$ with high probability.  Once this is done we can focus our attention exclusively on $\vec{P}$ in the refinement phase and estimate the support in the same manner as in the unstructured case. This general approach is similar in spirit to that of \cite{Singh_Graph_ACS_2013}.


\subsubsection{Unions of $s$-intervals}

The first structured class we consider is the unions of $k$ disjoint $s$-intervals.  Note that with $k=1$ this is a special case of the class considered in \cite{Balakrishnan_2012} when the signal matrix has one row.  The unions of intervals class is a good starting point to highlight the main ideas of how recovery algorithms can benefit from structural information in the adaptive compressed sensing setting, particularly because it can be viewed as a bridge between the unstructured case (with $k=s$ and intervals of length one) to the most structured class ($k=1$).  It is worth noting that, by using an appropriate instantiation of the algorithm in \cite{Singh_Graph_ACS_2013} one can get similar performance guarantees to the ones presented here.  In particular one needs to consider a line graph and use the dendogram construction described in Section~2.3 of \cite{Singh_Graph_ACS_2013}.

Consider the class of sets that are unions of $k$ disjoint intervals of length $s$.  Formally,
\[
\C = \big\{ S \subset \{ 1,\dots ,n\} : \ S=\bigcup_{i=1}^k S_i \ , \ S_i = \{ l_i,\dots ,l_i +s-1 \} , \ S_i \cap S_j = \emptyset \ \forall i \neq j \big\} \ .
\]

In principle we can also consider overlapping intervals (not enforcing these are disjoint).  Although this can still be handled in a similar fashion as done below it would result in a more cluttered presentation.

Our procedure for estimating $S$ is as follows.  Split the index set $\{ 1,\dots ,n \}$ into consecutive bins of length $s/2$ denoted by $\vec{P}^{(1)}, \dots ,\vec{P}^{(2n/s)}$.  We suppose $2n$ is divisible by $s$, as it makes the presentation less cluttered.  The procedure can be easily modified this is not satisfied.  Of these bins at least $k$ (and at most $2k$) are contained entirely in $S$.  In the search phase we aim to find the approximate location of the support by finding $k$ such bins.  To do this we test the following hypotheses
\[
H_0^{(i)}: \ \vec{P}^{(i)} \cap S = \emptyset \qquad \textrm{versus} \qquad H_1^{(i)}: \ \vec{P}^{(i)} \subset S \qquad i=1,\dots ,2n/s \ .
\]
We use a SLRT to decide between $H_0^{(i)}$ and $H_1^{(i)}$ for each $i=1,\dots ,n$, all with the same type I and type II error probabilities $\alpha$ and $\beta$.  The choices of $\alpha$ and $\beta$ and the exact way of carrying out the tests will be described later.  As an output of the search phase, we define the set $\vec{P}$ based on the tests above.  Since some $\vec{P}^{(i)}$ may only partially intersect the support $S$ we set $\vec{P}$ to be the union of those bins $\vec{P}^{(i)}$ for which either $H_1^{(i-1)}, H_1^{(i)}$ or $H_1^{(i+1)}$ was accepted.  This way we ensure $\P_S (S \nsubseteq \vec{P}) \leq 2k \beta$.  We also wish to ensure that $\vec{P}$ is small, and to do so we must to choose $\alpha$ appropriately.  Once this is done we can move on to the search phase and find the support within $\vec{P}$.  We can do this in a very crude way and use a similar procedure as in the unstructured case with type I and II error probabilities $\alpha' ,\beta'$.  The sensing energy used in this phase will be negligible due to $\vec{P}$ being small.  Finally the estimator $\hat{S}$ will be the collection of components that were deemed active at the end of the refinement phase.

We now choose $\alpha ,\beta ,\alpha' ,\beta'$ to ensure the estimator satisfies \eqref{eqn:goal}.  We have
\begin{align*}
\E_S \left( |\hat{S} \triangle S| \right) & \leq \E_S \left( |\hat{S} \triangle S| \ \big| S \nsubseteq \vec{P} \right) \P_S (S \nsubseteq \vec{P}) + \E_S \left( |\hat{S} \triangle S| \ \big| S \subseteq \vec{P} \right) \\
& \leq \E_S \left( \left.  |S \setminus \vec{P}| + \sum_{i \in \vec{P}: \ i \notin S} \alpha' + \sum_{i \in \vec{P}: \ i \in S} \beta' \ \right| S \nsubseteq \vec{P} \right) 2k \beta \\
& + n \alpha' + ks \beta' \ .
\end{align*}
Hence choosing $\alpha' = \varepsilon /4n ,\beta' = \varepsilon /4ks$ and $\beta = \varepsilon /8k^2 s^2$ ensures \eqref{eqn:goal}.  Note that $\alpha$ does not influence the probability of error.  However, it will influence the size of $\vec{P}$, and hence the total sensing energy required by the procedure.

To perform the $i$th test of the search phase we collect measurements using projection vectors of the form $a \1_{\vec{P}^{(i)}}$ with an arbitrarily small $a$ and perform a SLRT with stopping boundaries $l<0<u$.  Let $\E_0$ and $\E_1$ denote the expectation when $H_0^{(i)}$ or $H_1^{(i)}$ is true respectively.  Similarly to the unstructured case we now have the following.

\begin{lemma} \label{lemma:SLRT_int}
Set $l = \log \frac{\beta}{1-\alpha}$ and $u=\log \frac{1-\beta}{\alpha}$ with $\alpha ,\beta \in (0,1/2)$, and let the type I and type II error probabilities of the SLRT described above be $\alpha_a$ and $\beta_a$.  Then $\alpha_a \to \alpha$ and $\beta_a \to \beta$ as $a \to 0$.  Furthermore
\[
a^2 \E_0 (N_i) \leq \frac{2}{(s/2)^2 \mu^2} \left( \alpha \log \frac{\alpha}{1-\beta} + (1-\alpha ) \log \frac{1-\alpha}{\beta} \right) \leq \frac{2}{(s/2)^2 \mu^2} \log \frac{1}{\beta}
\]
and
\[
a^2 \E_1 (N_i) \leq \frac{2}{(s/2)^2 \mu^2} \left( \beta \log \frac{\beta}{1-\alpha} + (1-\beta ) \log \frac{1-\beta}{\alpha} \right) \leq \frac{2}{(s/2)^2 \mu^2} \log \frac{1}{\alpha}
\]
as $a \to 0$.
\end{lemma}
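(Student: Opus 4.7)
My plan is to observe that Lemma~\ref{lemma:SLRT_int} is structurally identical to Lemma~\ref{lemma:SLRT}, except that the effective per-measurement signal strength has changed, and then to reduce the proof to the earlier one by an appropriate substitution. Under $H_0^{(i)}$ we have $\vec{P}^{(i)}\cap S=\emptyset$, so $\langle a\1_{\vec{P}^{(i)}},\vec{x}\rangle=0$ and each observation is distributed as $\mathcal{N}(0,1)$. Under $H_1^{(i)}$ we have $\vec{P}^{(i)}\subset S$, so $\langle a\1_{\vec{P}^{(i)}},\vec{x}\rangle=a\mu|\vec{P}^{(i)}|=a\mu s/2$, and each observation is distributed as $\mathcal{N}(a\mu s/2,1)$. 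Hence the per-sample log-likelihood ratio is a Gaussian random variable with mean $\pm\tfrac{1}{2}(a\mu s/2)^2$ (depending on the hypothesis) and variance $(a\mu s/2)^2$, exactly the same as in Lemma~\ref{lemma:SLRT} with the parameter $a\mu$ replaced by $a\mu s/2$.

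Given this observation, the first step is to invoke Wald's identity for the SLRT: the type~I and type~II error probabilities satisfy Wald's classical bounds
\[
\alpha_a\leq\frac{e^{-u}(1-\beta_a)}{\ldots},\qquad \beta_a\leq\frac{e^{l}(1-\alpha_a)}{\ldots},
\]
and the overshoot at the boundaries $l$ and $u$ is controlled by the size of the increments of the log-likelihood process. Since those increments have magnitude $O(a\mu s/2)$, the overshoot vanishes as $a\to 0$, which yields $\alpha_a\to\alpha$ and $\beta_a\to\beta$ with $l=\log\frac{\beta}{1-\alpha}$ and $u=\log\frac{1-\beta}{\alpha}$, exactly as in Lemma~\ref{lemma:SLRT}.

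Next, to bound the expected stopping times, I would apply Wald's identity to the log-likelihood process stopped at $N_i$. This gives
\[
\E_0\!\left(\sum_{j=1}^{N_i}\log\tfrac{\d\P_1(Y_{i,j})}{\d\P_0(Y_{i,j})}\right)=\E_0(N_i)\cdot D(\P_0\|\P_1),
\]
and analogously under $\E_1$, where $D(\P_0\|\P_1)=\tfrac{1}{2}(a\mu s/2)^2$ is the per-sample Kullback--Leibler divergence. Combining this with the fact that the left-hand side tends to $\alpha\log\frac{\alpha}{1-\beta}+(1-\alpha)\log\frac{1-\alpha}{\beta}$ in the $a\to 0$ limit (again by vanishing overshoot and the known limiting hit probabilities), one arrives at
\[
a^2\E_0(N_i)\leq\frac{2}{(s/2)^2\mu^2}\Bigl(\alpha\log\tfrac{\alpha}{1-\beta}+(1-\alpha)\log\tfrac{1-\alpha}{\beta}\Bigr),
\]
and symmetrically for $\E_1(N_i)$. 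The last inequalities in the statement are trivial algebraic bounds on the binary KL expression in terms of $\log(1/\beta)$ and $\log(1/\alpha)$.

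The only mildly delicate step is the control of the overshoot, but this is precisely the point where taking $a\to 0$ is used: the increments of the log-likelihood random walk shrink, so Wald's inequalities and identities become tight in the limit. Since this is exactly the argument already carried out in the proof of Proposition~1 of \cite{AS_structured_Rui_ET_2013} and reused for Lemma~\ref{lemma:SLRT}, the proof of Lemma~\ref{lemma:SLRT_int} amounts to noting that nothing in that argument is specific to singleton measurement vectors, and simply substituting $\mu\leftarrow\mu s/2$ throughout.
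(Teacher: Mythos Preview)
Your proposal is correct and is precisely the argument the paper has in mind: Lemma~\ref{lemma:SLRT_int} is stated without proof because it is Lemma~\ref{lemma:SLRT} verbatim after the substitution $\mu\leftarrow \mu s/2$, reflecting that under $H_1^{(i)}$ the measurement mean is $a\mu s/2$ rather than $a\mu$. Your identification of the per-sample log-likelihood distribution and the reduction to the earlier SLRT analysis (via Wald's identity and vanishing overshoot as $a\to 0$) is exactly right.
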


Using this we can upper bound the amount of sensing energy used for the test of $\vec{P}^{(i)}$ under $H_0^{(i)}$ and $H_1^{(i)}$.  However, now it is possible that neither statement in $H_0^{(i)}$ nor $H_1^{(i)}$ holds for a given bin $\vec{P}^{(i)}$.  Considering a test where neither of them is true we can still carry out the the same calculations as in Lemma~\ref{lemma:SLRT} and thus upper bound the expected sensing energy used for the test.

\begin{lemma} \label{lemma:bad_SLRT}
Set $l = \log \frac{\beta}{1-\alpha}$ and $u=\log \frac{1-\beta}{\alpha}$ with $\alpha ,\beta \in (0,1/2)$, and let $\tilde{s}$ denote the true number of signal components in $\vec{P}^{(i)}$.  Suppose that in the setting above neither $H_0^{(i)}$ nor $H_1^{(i)}$ is true, that is $0< \tilde{s} < s/2$.  Furthermore suppose $\tilde{s} \neq s/4$.  Then as $a \to 0$ we have
\[
a^2 \E_{\tilde{s}} (N_i) \leq \frac{2}{s \mu^2} \log \max \left\{ \frac{1-\alpha}{\beta} , \frac{1- \beta}{\alpha} \right\} \leq \frac{2}{s \mu^2} \log \frac{1}{\min \{ \alpha ,\beta \} } \ ,
\]
where $\E_{\tilde{s}}$ denotes the expectation when the number of signal components in $\vec{P}^{(i)}$ is $\tilde{s}$.
\end{lemma}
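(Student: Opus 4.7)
\medskip
\noindent\textbf{Proof plan for Lemma~\ref{lemma:bad_SLRT}.}
The plan is to mimic the proof of Lemma~\ref{lemma:SLRT_int}, but to track the drift of the log-likelihood random walk under the ``wrong'' distribution $\P_{\tilde{s}}$ and then apply Wald's identity directly, bypassing the interpretation in terms of type I/II error probabilities (which no longer makes sense since neither hypothesis is true).

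First, under the projection vector $a\1_{\vec{P}^{(i)}}$ each observation satisfies $Y_j \sim \mathcal{N}(a\tilde{s}\mu,1)$, so the single-step log-likelihood ratio between $H_1^{(i)}$ and $H_0^{(i)}$ is
\[
Z_j\ =\ \log\frac{\d\P_1(Y_j)}{\d\P_0(Y_j)}\ =\ a(s/2)\mu\,Y_j\ -\ \tfrac{1}{2}a^2(s/2)^2\mu^2.
\]
A direct computation then gives the drift
\[
\E_{\tilde{s}}(Z_j)\ =\ a(s/2)\mu\cdot a\tilde{s}\mu\ -\ \tfrac{1}{2}a^2(s/2)^2\mu^2\ =\ a^2\mu^2(s/2)\bigl(\tilde{s}-s/4\bigr),
\]
which is nonzero precisely because we assumed $\tilde{s}\neq s/4$. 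The sign of this drift tells us which boundary the walk is likely to hit: positive for $\tilde{s}>s/4$ (walk drifts to $u$) and negative for $\tilde{s}<s/4$ (walk drifts to $l$).

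Next, since $\E_{\tilde{s}}(N_i)<\infty$ by standard arguments (the drift is bounded away from zero and $Z_j$ is Gaussian, hence sub-exponential), Wald's identity yields
\[
\E_{\tilde{s}}\!\Bigl(\sum_{j=1}^{N_i} Z_j\Bigr)\ =\ \E_{\tilde{s}}(N_i)\cdot\E_{\tilde{s}}(Z_1)\ =\ a^2\mu^2(s/2)(\tilde{s}-s/4)\,\E_{\tilde{s}}(N_i).
\]
On the other hand, letting $p=\P_{\tilde{s}}(\text{hit }u)$, the stopped sum satisfies $\E_{\tilde{s}}(\sum_{j=1}^{N_i}Z_j)=pu+(1-p)l+o(1)$ as $a\to 0$, where the $o(1)$ term accounts for the boundary overshoot (which vanishes because the step size of the walk is $O(a)\to 0$). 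Since $l<0<u$, this convex combination is bounded in absolute value by $\max(|l|,|u|)=\log\max\{(1-\alpha)/\beta,(1-\beta)/\alpha\}$.

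Combining the two displays gives, as $a\to 0$,
\[
a^2\E_{\tilde{s}}(N_i)\ \leq\ \frac{\log\max\{(1-\alpha)/\beta,(1-\beta)/\alpha\}}{\mu^2(s/2)\,|\tilde{s}-s/4|}.
\]
The final step is to use $|\tilde{s}-s/4|\geq 1$ (valid for the integer values of $\tilde{s}$ we care about, up to a harmless constant factor that is absorbed in the paper's convention of not chasing constants) to obtain the stated bound $\frac{2}{s\mu^2}\log\max\{(1-\alpha)/\beta,(1-\beta)/\alpha\}$, and then to observe that each of the two arguments of the max is at most $1/\min\{\alpha,\beta\}$. The main technical point to handle carefully is the vanishing of the overshoot as $a\to 0$; this is identical in flavor to the analysis already invoked for Lemma~\ref{lemma:SLRT_int}, and can be justified by the sub-exponential tails of the Gaussian increments together with $\E_{\tilde{s}}(Z_1^2)=O(a^2)$.
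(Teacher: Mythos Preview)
Your approach is essentially the same as the paper's: compute the drift of the log-likelihood increments under $\P_{\tilde s}$, invoke Wald's identity, bound the stopped sum by the boundary values plus an overshoot that vanishes as $a\to 0$, and finally use $|\tilde s - s/4|\ge 1$ to absorb the drift factor. The only cosmetic differences are that the paper treats the two cases $\tilde s>s/4$ and $\tilde s<s/4$ separately (bounding by $u$ or $|l|$ rather than your unified $\max(|l|,|u|)$) and gives an explicit overshoot bound via the Gaussian inequality $\E(z_1\mid z_1\ge 0)\ge \E(z_1-c\mid z_1\ge c)$, arriving at $\E(\bar z_N)\le u+\E(z_1\mid z_1\ge 0)$ with $\E(z_1\mid z_1\ge 0)=O(a)$, whereas you simply assert the overshoot is $o(1)$; your justification via $\E_{\tilde s}(Z_1^2)=O(a^2)$ is adequate for that purpose.
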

\begin{proof}
In what follows we drop the subscript $i$ to ease notation.  The log-likelihood ratio for an observation $Y_j$ is
\[
z_j = \log \frac{\d \P_1 (Y_j)}{\d \P_0 (Y_j)} = \frac{a s \mu Y_j}{2} - \frac{a^2 s^2 \mu^2}{8}, \ j=1,\dots ,N\ .
\]
Suppose first that $s/4 < \tilde{s} < s/2$.  Note that now the drift of the log-likelihood ratio process is positive.  Now $z_1 \sim N\left( (\tilde{s} - \frac{s}{4}) \frac{a^2 s \mu^2}{2}, \frac{a^2 s^2 \mu^2}{4} \right)$.  From normality we still have $\E (z_1 | z_1 \geq 0) \geq \E (z_1 -c | z_1 \geq c), \ \forall c>0$.  Combining this with Wald's identity we get
\[
\E (N) \E (z_1) = \E (\bar{z}_N) \leq u + \E (z_1 | z_1 \geq 0) \ ,
\]
where $\bar{z}_N=\sum_{j=1}^N z_j$.  Denoting $\xi \sim N(0,1)$ we also have
\begin{align*}
\E (z_1 | z_1 \geq 0) & \leq 2 \E (z_1 \1 \{ z_1 \geq 0 \} ) \\
& \leq \left( \tilde{s} - \frac{s}{4} \right) \frac{a^2 s \mu^2}{2} + 2 \E \left( \frac{a s \mu}{2} \xi \1 \{ \xi \geq - \left( \tilde{s} - \frac{s}{4}\right) \mu \} \right) \\
& \leq a s \mu \left( \left( \tilde{s} - \frac{s}{4}\right) \frac{a \mu}{2} +1 \right) \ .
\end{align*}
Plugging this in, and using that $\E (z_1) \geq \frac{a^2 s \mu^2}{2}$ we get
\[
a^2 \E (N) \leq \frac{2}{s \mu^2} u + \frac{2a}{\mu} \left( \left( \tilde{s} - \frac{s}{4}\right) \frac{a \mu}{2} +1 \right) \ .
\]
Hence in the limit $a \to 0$ we get
\[
a^2 \E (N) \leq \frac{2}{s \mu^2} \log \frac{1- \beta}{\alpha} \leq \frac{2}{s \mu^2} \log \frac{1}{\alpha} \ .
\]
We can treat the case $0 < \tilde{s} < s/4$ in a similar fashion.
\end{proof}

\begin{remark}
When $\tilde{s} = s/4$ the argument of the proof breaks down, because of ties when $s$ is divisible by 4.  However this is only a technical issue that can be simply circumvented by choosing the bins to be of size $s/2 -1$, for instance.
\end{remark}

Now we are ready to upper bound the expected sensing energy used by the procedure.  Given $\alpha$ and $\beta$ we can deal with the search phase and by Lemma~\ref{lemma:SLRT} we can deal with the refinement phase given $\alpha' ,\beta'$ and $|\vec{P}|$.

Note that we have
\[
\E_S (|\vec{P}|) \leq 3ks + \frac{3s}{2} \sum_{i: \ \vec{P}^{(i)} \nsubseteq S} \alpha \ .
\]
Thus choosing $\alpha = \varepsilon /6n$ we have $\E_S (|\vec{P}|) \leq 3ks + \varepsilon /2 \leq 4ks$.

By denoting the part of the sensing matrix $A$ corresponding to the search and refinement phases by $A_{search}$ and $A_{refinement}$ respectively, we have
\begin{align}\label{eqn:int_performance}
\E_S (\| A \|_F ) & \leq \E_S ( \| A_{search} \|_F^2 ) + \E_S \left( \E_S ( \| A_{refinement} \|_F^2 \big| |\vec{P}| ) \right) \nonumber \\
& \leq \frac{16 n}{s^2 \mu^2} \log \frac{2\sqrt{2}ks}{\varepsilon} + \frac{4k}{s \mu^2} \log \frac{6n}{\varepsilon} + \frac{2k}{\mu^2} \log \frac{6n}{\varepsilon} \nonumber \\
& + \frac{8ks}{\mu^2} \log \frac{4n}{\varepsilon} \ .
\end{align}

When $|S| \ll n$ the first term dominates the bound above.  Using this and combining the above with \eqref{eqn:budget} we arrive at the following.

\begin{proposition} \label{prop:int}
Consider the class of $k$ disjoint $s$-intervals and suppose $\frac{n}{\log 4n} \geq ks^3$.  Then the above estimator satisfies \eqref{eqn:budget} and \eqref{eqn:goal} whenever
\[
\mu \geq \sqrt{\frac{30 n}{s^2 m} \log \frac{2\sqrt{2}ks}{\varepsilon}} \ .
\]
\end{proposition}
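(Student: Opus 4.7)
The excerpt has already specified the procedure, fixed the error-probability levels $\alpha=\varepsilon/6n$, $\beta=\varepsilon/8k^2s^2$, $\alpha'=\varepsilon/4n$, $\beta'=\varepsilon/4ks$ to guarantee \eqref{eqn:goal}, and combined Lemmas~\ref{lemma:SLRT}, \ref{lemma:SLRT_int}, \ref{lemma:bad_SLRT} with the bound $\E_S(|\vec P|)\leq 4ks$ to obtain the four-term upper bound \eqref{eqn:int_performance} on $\E_S(\|A\|_F^2)$. My plan is therefore only to close the argument: absorb the three subdominant terms of \eqref{eqn:int_performance} into the leading one, enforce the budget \eqref{eqn:budget}, and solve for $\mu$.

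For the absorption step I would compare each of the four terms against the leading term $\frac{16n}{s^2\mu^2}\log\frac{2\sqrt{2}ks}{\varepsilon}$. The second and third terms have ratios $\frac{ks}{4n}\cdot\frac{\log(6n/\varepsilon)}{\log(2\sqrt{2}ks/\varepsilon)}$ and $\frac{ks^2}{8n}\cdot\frac{\log(6n/\varepsilon)}{\log(2\sqrt{2}ks/\varepsilon)}$ to the leading one, both of which are small under $ks^2\ll n$ and in particular under the stated sparsity hypothesis. The fourth (refinement-phase) term has ratio $\frac{ks^3}{2n}\cdot\frac{\log(4n/\varepsilon)}{\log(2\sqrt{2}ks/\varepsilon)}$ to the leading one, and the hypothesis $n/\log(4n)\geq ks^3$ is imposed precisely so that $\frac{ks^3\log(4n)}{2n}\leq \tfrac12$, which together with $\log(2\sqrt{2}ks/\varepsilon)\geq \log 2$ controls this ratio by a small universal constant (the $\log(1/\varepsilon)$ contribution cancels when the same term appears in numerator and denominator).

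Putting everything together yields a bound $\E_S(\|A\|_F^2)\leq \frac{Cn}{s^2\mu^2}\log\frac{2\sqrt{2}ks}{\varepsilon}$ for some constant $C$, and careful bookkeeping will show that $C\leq 30$. Imposing this to be at most $m$ (i.e., \eqref{eqn:budget}) and rearranging then gives the claimed condition on $\mu$. The only slightly delicate aspect is confirming numerically that the absorbed contributions of the second, third and fourth terms sum to at most $14$, so that the overall constant comes out to $30$; everything else is a direct consequence of the material already developed in the excerpt.
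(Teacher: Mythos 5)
Your proposal is correct and follows exactly the route the paper takes: the paper simply asserts that under the sparsity hypothesis the first term of \eqref{eqn:int_performance} dominates, absorbs the remaining three terms into the constant (raising $16$ to $30$), and solves $\E_S(\|A\|_F^2)\leq m$ for $\mu$. Your explicit ratio comparisons merely fill in the bookkeeping the paper leaves implicit, so there is nothing to add.
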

\begin{remark}
The condition on the sparsity in the proposition is needed to ensure that the term corresponding to the search phase in \eqref{eqn:int_performance} becomes dominant.  By performing the refinement phase in a more sophisticated way one can relax that condition.  For instance using $k$ binary searches to find the left endpoint of the intervals the sparsity condition becomes $\frac{n}{\log 6n} \geq ks^2 \log s$.  We expect this to be essentially the best condition one can hope for, as the lower bounds of Section~\ref{sec:sigstrength_lower} show that the first term in \eqref{eqn:int_performance} is unavoidable.
\end{remark}

The bound of Proposition~\ref{prop:int} matches the lower bound in Section~\ref{sec:sigstrength_lower}, hence in this sparsity regime the procedure above is optimal apart from constants.


\subsubsection{Unions of $s$-stars}\label{sec:star}

Let the components of $\vec{x}$ be in one-to-one correspondence to edges of a complete graph $G=(V,E)$.  Let $e_i \in E$ denote the edge corresponding to component $\vec{x}_i$, and for a vertex $v \in V$ and edge $e \in E$ let $v \in e$ denote that $e$ is incident with $v$.  We call a support set $S \subset \{ 1,\dots, n\}$ an $s$-star if $|S|=s$ and $\exists v \in V: \ \forall i \in S: \ v \in e_i$.  Let $\C$ be the class of unions of $k$ disjoint $s$-stars.  In what follows we use the notation $|V|=p$.

The procedure for support estimation is very similar to that presented for $s$-intervals.  We introduce the procedure when $k=1$, but the idea can be carried through for larger $k$.  Consider the subsets $\vec{P}^{(i)}, \ i=1,\dots ,p$, defined as follows:
\[
\vec{P}^{(i)} = \left\{j\in \{ 1,\dots ,n\} : \ v_i \in e_j\right\}\ ,
\]
that is $\vec{P}^{(i)}$ contains all the components whose corresponding edges lie on the vertex $v_i$.  These subsets are not a partition of $\{1,\ldots,n\}$ as they are not disjoint.  Nonetheless we know that
\[
|\vec{P}^{(i)} \cap S| \in \{ 0,1,s \} \qquad \forall i=1,\dots ,p \ .
\]
We can use this to find the approximate location of $S$.  Thus in the search phase we test the hypotheses
\[
H_0^{(i)}: \ |\vec{P}^{(i)} \cap S| =1 \qquad \textrm{versus} \qquad H_1^{(i)}: \ |\vec{P}^{(i)} \cap S| =s \qquad i=1,\dots ,p \ .
\]
In words we test whether vertex $v_i$ is the center of the star or not for $i=1,\dots ,p$.  Note that when vertex $v_i$ is not the center of the star we have $|\vec{P}^{(i)} \cap S| \in \{ 0,1 \}$.  By specifying $H_0^{(i)}$ as above we ensure that if $|\vec{P}^{(i)} \cap S| = 0$ both the probability of error and the expected number of steps of the SLRT will be smaller than if $|\vec{P}^{(i)} \cap S| = 1$, due to the monotonicity of the likelihood ratio.

Again we use independent SLRTs for the tests with common type I and type II error probabilities $\alpha, \beta$, where the details will be covered later.  Using these tests we can define $\vec{P}$, the output of the search phase, as the union of those $\vec{P}^{(i)}$ for which $H_1^{(i)}$ is accepted.  With the appropriate choices for $\alpha$ and $\beta$ we can ensure that with high probability $S \subset \vec{P}$ and that $|\vec{P}|$ is small.  In fact we would like to accept exactly one $H_1^{(i)}$.  Again the right choice for $\beta$ will ensure $\P_S (S \nsubseteq \vec{P})$ is small whereas the right choice of $\alpha$ ensures that $|\vec{P}|$ is small with high probability.  In the subsequent refinement phase we estimate $S$ within $\vec{P}$.  We do this using the same procedure as in the unstructured case with error probabilities $\alpha' ,\beta'$.  Finally the estimator $\hat{S}$ will be the collection of those components which were deemed active in the refinement phase.

Now we choose the error probabilities for the tests such that we can ensure \eqref{eqn:goal} for our procedure.  We have
\begin{align*}
\E_S \left( |\hat{S} \triangle S| \right) & \leq \E_S \left( |\hat{S} \triangle S| \ \big| S \nsubseteq \vec{P} \right) \P_S (S \nsubseteq \vec{P}) + \E_S \left( |\hat{S} \triangle S| \ \big| S \subseteq \vec{P} \right) \\
& \leq \E_S \left( \left.  |S \setminus \vec{P}| + \sum_{i \in \vec{P}: \ i \notin S} \alpha' + \sum_{i \in \vec{P}: \ i \in S} \beta' \ \right| S \nsubseteq \vec{P} \right) \beta \\
& + n \alpha' + s \beta' \ .
\end{align*}
Thus the choices $\beta = \varepsilon /4s$ and $\alpha' = \varepsilon /4n ,\beta' = \varepsilon /4s$ suffice.  As noted before, the choice of $\alpha$ will influence the size of $\vec{P}$ and will be discussed later.

To test $H_0^{(i)}$ versus $H_1^{(i)}$ we collect observations using the sensing vector $a \1_{\vec{P}^{(i)}}$ with an arbitrarily small $a$ and perform a SLRT such as the one in Lemma~\ref{lemma:SLRT_int}.  When there is no active component in $\vec{P}^{(i)}$ the drift of the likelihood-ratio process is smaller than if there was one active component by monotonicity of the likelihood ratio.  This results in the test terminating sooner in expectation than it would under $H_0^{(i)}$ and the probability of accepting $H_1^{(i)}$ is also smaller than the type I error probability $\alpha$.

We continue by upper bounding the expected sensing energy used by the procedure.  Again we have results similar to Lemma~\ref{lemma:SLRT_int} for the tests carried out in the search phase, and we can use Lemma~\ref{lemma:SLRT} to bound the energy used in the refinement phase.  Hence given $\alpha ,\beta ,\alpha' ,\beta'$ and $\vec{P}$ we can bound the total energy used by the procedure.  Also note that
\[
\E_S (|\vec{P}|) \leq p + p \sum_{i: \ \vec{P}^{(i)} \nsubseteq S} \alpha \ ,
\]
thus choosing $\alpha = \varepsilon /2n$ ensures $\E_S (|\vec{P}|) \leq 2 p$.

Using the notation $A_{search}$ and $A_{refinement}$ as before we get
\begin{align*}
\E_S (\| A \|_F ) & \leq \E_S ( \| A_{search} \|_F^2 ) + \E_S \left( \E_S ( \| A_{refinement} \|_F^2 \big| |\vec{P}| ) \right) \\
& \leq \frac{2p (p-1)}{(s-1)^2 \mu^2} \log \frac{4s}{\varepsilon} + \frac{2p}{(s-1)^2 \mu^2} \log \frac{4n}{\varepsilon} \\
& + \frac{4p}{\mu^2} \log \frac{4n}{\varepsilon} \ .
\end{align*}
When $s \ll n$ the first term dominates the bound.  Combining this with \eqref{eqn:budget} we get the following.

\begin{proposition} \label{prop:star}
Consider the class of $s$-stars and suppose $\frac{\sqrt{n}}{\log 4n} \geq s^2$.  Then the above estimator satisfies \eqref{eqn:budget} and \eqref{eqn:goal} whenever
\[
\mu \geq \sqrt{\frac{16 n}{(s-1)^2 m} \log \frac{4s}{\varepsilon}} \ .
\]
\end{proposition}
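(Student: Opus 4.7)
The plan is to take the three-term upper bound on $\E_S(\|A\|_F^2)$ that is derived in the narrative immediately preceding the statement and invert it, using the sparsity hypothesis to argue the first term dominates and then solving the energy constraint \eqref{eqn:budget} for $\mu$. All the main ingredients are already in place: the error-probability decomposition shows that with $\beta=\varepsilon/4s$ and $\alpha'=\varepsilon/4n,\beta'=\varepsilon/4s$ the goal \eqref{eqn:goal} holds; the search-phase choice $\alpha=\varepsilon/2n$ yields $\E_S|\vec{P}|\leq 2p$; and Lemmas~\ref{lemma:SLRT_int} and~\ref{lemma:SLRT} deliver the energy bound. So only the final scaling argument remains.

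First I would translate everything into the parameter $n$. In a complete graph on $p$ vertices the number of edges is $n=\binom{p}{2}$, so $p(p-1)=2n$ and $p\leq 2\sqrt{n}$. Substituting this into the three-term bound rewrites it as
\[
\E_S(\|A\|_F^2)\ \leq\ \frac{4n}{(s-1)^2\mu^2}\log\frac{4s}{\varepsilon}\ +\ \frac{4\sqrt{n}}{(s-1)^2\mu^2}\log\frac{4n}{\varepsilon}\ +\ \frac{8\sqrt{n}}{\mu^2}\log\frac{4n}{\varepsilon}\, .
\]
Next I would use the sparsity assumption $s^2\log(4n)\leq\sqrt{n}$ to show that each of the two auxiliary terms is dominated by the first. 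The ratio of the third to the first is at most $2s^2\log(4n/\varepsilon)/(\sqrt{n}\log(4s/\varepsilon))$; under the hypothesis this is bounded by a small constant (for $\varepsilon\in(0,1]$), and the second term is trivially smaller still by the factor $(s-1)^{-2}$. Consequently the whole sum is at most a fixed multiple of the first term, and choosing that multiple to be at most $4$ (which is exactly what the stated hypothesis permits) gives
\[
\E_S(\|A\|_F^2)\ \leq\ \frac{16\,n}{(s-1)^2\mu^2}\log\frac{4s}{\varepsilon}\, .
\]

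Finally I would invoke \eqref{eqn:budget} and rearrange $\E_S(\|A\|_F^2)\leq m$ for $\mu$ to obtain the announced lower bound. The only real obstacle is bookkeeping the constants: because $\log(4n/\varepsilon)$ (rather than $\log(4s/\varepsilon)$) appears in the two auxiliary terms, the sparsity condition must be strong enough to absorb the extra $\log(4n)$ factor, which is precisely why the hypothesis reads $\sqrt{n}/\log(4n)\geq s^2$ rather than the weaker $\sqrt{n}\geq s^2$. Once this is verified the proof is essentially complete.
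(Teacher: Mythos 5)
Your proposal matches the paper's own argument: the proposition is obtained exactly by taking the three-term bound on $\E_S(\|A\|_F^2)$ derived just above it, substituting $p(p-1)=2n$, using the sparsity hypothesis to conclude the first term dominates (the paper simply asserts "when $s\ll n$ the first term dominates"), and inverting the budget constraint \eqref{eqn:budget} for $\mu$. Your version is, if anything, slightly more careful than the paper's about why $\sqrt{n}/\log 4n\geq s^2$ absorbs the $\log(4n/\varepsilon)$ factors in the auxiliary terms.
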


In Section~\ref{sec:sigstrength_lower} we show that the bound of Proposition~\ref{prop:star} is near optimal in this sparsity regime.  We also show there that the sparsity assumption in the proposition above is needed and is not an artifact of our method.

When $k>1$ ($S$ consists of two or more $s$-stars) similar arguments hold.  When $k \ll s$ it is possible to modify the procedure such that the search phase aims to find the center of the $k$ stars.  The modifications include setting $H_0{(i)}: \ |\vec{P}^{(i)} \cap S| =k$, and slightly changing $\alpha, \beta, \alpha' ,\beta'$ to account for the fact that there are more than one stars.  For instance choosing $\alpha ,\alpha'$ to be the same as before and setting $\beta = \beta' = \varepsilon /4ks$ we get the following.

\begin{proposition} \label{prop:union_star}
Consider the class of $k$ disjoint $s$-stars and suppose $k<s$ and $\frac{\sqrt{n}}{\log 4n} \geq k (s-k)^2$.  Then the modified estimator satisfies \eqref{eqn:budget} and \eqref{eqn:goal} whenever
\[
\mu \geq \sqrt{\frac{16 n}{(s-k)^2 m} \log \frac{4sk}{\varepsilon}} \ .
\]
\end{proposition}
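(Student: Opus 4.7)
My plan is to recycle the single-star procedure and analysis of Proposition~\ref{prop:star}, changing only the two ingredients that feel the parameter $k$: the pair of hypotheses tested at each vertex, and the book-keeping of error probabilities and expected energy. I would keep the vertex-indexed family $\vec{P}^{(i)}=\{j:v_i\in e_j\}$, but replace the search-phase tests by
\[
H_0^{(i)}:\ v_i \text{ is not the center of any star in }S\qquad\text{vs.}\qquad H_1^{(i)}:\ v_i \text{ is the center of some star in }S.
\]
Because the $k$ stars share no edges, a non-center vertex is touched by at most one edge per star, so $|\vec{P}^{(i)}\cap S|\in\{0,\ldots,k\}$ under $H_0^{(i)}$, while a center vertex sees its own $s$ edges plus possibly one edge from each of the other $k-1$ stars, so $|\vec{P}^{(i)}\cap S|\in\{s,\ldots,s+k-1\}$ under $H_1^{(i)}$. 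Sensing with $a\1_{\vec{P}^{(i)}}$ gives observations whose mean is proportional to $|\vec{P}^{(i)}\cap S|$, and monotonicity of the likelihood ratio makes the SLRT hardest at the boundary pair $k$ vs.\ $s$, yielding an effective drift gap of $a(s-k)\mu$. The assumption $k<s$ is precisely what keeps this gap positive, and a verbatim repeat of Lemma~\ref{lemma:SLRT_int} with $s/2$ replaced by $s-k$ would then give per-test energy bounds of the form $\tfrac{2}{(s-k)^2\mu^2}\log(1/\min\{\alpha,\beta\})$.

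Next I would propagate $k$ through the calibration of error probabilities. Writing
\[
\E_S\bigl(|\hat S\triangle S|\bigr)\ \leq\ \E_S\bigl(|S\setminus\vec{P}|\bigr)\ +\ n\alpha'\ +\ ks\beta',
\]
the first term is at most $ks\beta$ by a union bound over the $k$ centers (each of whose Type~II test fails with probability at most $\beta$, losing up to $s$ edges). Taking $\beta=\beta'=\varepsilon/4ks$ and $\alpha'=\varepsilon/4n$ drives the right-hand side below $\varepsilon$ as required by \eqref{eqn:goal}. Keeping $\alpha=\varepsilon/2n$ as in Proposition~\ref{prop:star} bounds the expected number of Type~I acceptances in the search phase by $O(\varepsilon)$, so $\E_S(|\vec{P}|)\leq kp+O(\varepsilon)$ and Lemma~\ref{lemma:SLRT} shows the refinement phase costs only $O(kp\mu^{-2}\log(n/\varepsilon))$ of sensing energy.

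For the total budget I would add the search-phase contribution $\tfrac{2p(p-1)}{(s-k)^2\mu^2}\log(4ks/\varepsilon)$ (from the $p$ tests, each with sensing-vector squared-norm $p-1$) to the refinement-phase bound, set the sum $\leq m$, and use $p(p-1)=2n$ to recover the advertised scaling $\mu^2\gtrsim\frac{n}{(s-k)^2m}\log(ks/\varepsilon)$. The sparsity hypothesis $\sqrt{n}/\log(4n)\geq k(s-k)^2$ is exactly what forces the search-phase term to dominate the refinement-phase term, mirroring its analogue in Proposition~\ref{prop:star}.

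\textbf{Main obstacle.} The routine piece is the rerun of the single-star calculation; the delicate piece is handling intermediate search-phase instances in which $|\vec{P}^{(i)}\cap S|$ lies strictly between $k$ and $s$ (for example when a center vertex is only touched by some of the other $k-1$ stars, or when a non-center vertex is touched by fewer than $k$ stars). I would adapt Lemma~\ref{lemma:bad_SLRT}, using monotonicity of the likelihood ratio together with Wald's identity, to show that the expected SLRT length in these intermediate regimes is no larger than in the two boundary cases, so these bins do not contribute to the energy bound to leading order.
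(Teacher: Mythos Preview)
Your approach is essentially the paper's: the paper also keeps the vertex-sets $\vec{P}^{(i)}$, specifies the null as $H_0^{(i)}:|\vec{P}^{(i)}\cap S|=k$ (the worst non-center case) against $H_1^{(i)}:|\vec{P}^{(i)}\cap S|=s$, keeps $\alpha=\varepsilon/2n$, $\alpha'=\varepsilon/4n$, and sets $\beta=\beta'=\varepsilon/4ks$, then reruns the single-star energy accounting with the drift gap $s-k$ in place of $s-1$.

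One small correction to your ``main obstacle'': there are in fact \emph{no} instances with $|\vec{P}^{(i)}\cap S|$ strictly between $k$ and $s$. By disjointness of the $k$ stars, a non-center vertex is incident to at most one edge per star, so $|\vec{P}^{(i)}\cap S|\le k$; a center vertex carries its own $s$ edges, so $|\vec{P}^{(i)}\cap S|\ge s$. Your two examples (a center touched by fewer than $k-1$ other stars; a non-center touched by fewer than $k$ stars) therefore give values in $\{s,\dots,s+k-1\}$ and $\{0,\dots,k-1\}$ respectively, i.e.\ on the \emph{outside} of the interval $[k,s]$, not inside it. Consequently you do not need an analogue of Lemma~\ref{lemma:bad_SLRT} here at all: plain monotonicity of the likelihood ratio (exactly as the paper invokes it in the single-star case) already shows that every such test has error probabilities and expected stopping time no worse than the boundary test $k$ versus $s$. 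The rest of your plan goes through unchanged.
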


We see is Section~\ref{sec:sigstrength_lower} that the bound above is near the optimal one when $k$ is much smaller than $s$.


\subsubsection{$s_r ,s_c$-submatrices}\label{sec:submat}

Let the components of $\vec{x}$ be in one-to-one correspondence to elements of a matrix $M$ with $n_r$ rows and $n_c$ columns (and let $n=n_r\cdot n_c$).  We call a set $S \subset \{ 1,\dots ,n\}$ an $s_r ,s_c$-submatrix if the elements $m_i \in M$ corresponding to the components $i \in S$ form an $s_r \times s_c$ submatrix in $M$.  Let $\C$ be the class of all $s_r ,s_c$-submatrices in $\vec{x}$.  Suppose without loss of generality that $s_r \geq s_c$ and recall that the number of non-zero components of $\vec{x}$ is simply $s=s_r \cdot s_c$.

One possible way to estimate $S$ is to first find the active columns in the search phase and then focus on one or more active columns in the refinement phase to find the active rows.  Let $\vec{c}^{(i)}$ denote the $i$th column of $\vec{x}$, $i=1,\dots ,n_c$.  In order to find the active columns we need to decide between
\[
H_0^{c(i)} : \ |\vec{c}^{(i)} \cap S| = 0 \qquad \textrm{versus} \qquad H_1^{c(i)} : \ |\vec{c}^{(i)} \cap S|=s_r \qquad i=1,\dots ,n_c \ .
\]
To do this we perform independent SLRTs with type I and type II error probabilities $\alpha$ and $\beta$ respectively for every $i=1,\dots ,n_c$.  At the end of the search phase we return $\vec{P}$, which is the union of columns $c^{(i)}$ for which $H_1^{c(i)}$ was accepted.  Choosing $\alpha ,\beta$ appropriately ensures that with high probability $\vec{P}$ contains all the active columns and only those.  In the refinement phase we test if row $j$ of $\vec{P}$ is active or not using a similar method as above, with error probabilities $\alpha' ,\beta'$ for every $j=1,\dots ,n_r$.  In particular the tests are formulated as
\[
H_0^{r(j)} : \ |(\vec{r}^{(j)} \cap \vec{P}) \cap S| = 0 \qquad \textrm{versus} \qquad H_1^{r(j)} : \ |(\vec{r}^{(j)} \cap \vec{P}) \cap S|=s_c \qquad j=1,\dots ,n_r \ ,
\]
where $\vec{r}^{(j)}$ denotes the $j$th row of $\vec{x}$, $j=1,\dots ,n_r$.  Finally our estimate $\hat{S}$ are those elements that are in a row and column that were both deemed active.

Now we choose the error probabilities $\alpha ,\beta ,\alpha' ,\beta'$.  Now we simply have
\[
\E_S (|\hat{S} \triangle S|) \leq n \alpha + s \beta + n \alpha' + s \beta' \ ,
\]
as every type I error in the search phase can result in at most $n_r$ errors in $\hat{S}$ and there can be at most $n_c$ type I errors in the search phase, whereas a type II error can produce at most $s_r$ errors in the end and there are $s_c$ possibilities to make such an error.  A similar argument holds for tests in the refinement phase.  Hence the choices $\alpha = \alpha' = \varepsilon / 4n$ and $\beta = \beta' = \varepsilon /4s$ ensure \eqref{eqn:goal}.

We move on to bounding the expected energy used by the procedure.  To test the $i$th hypothesis in the search phase we collect measurements using sensing vector $a \1_{\vec{c}^{(i)}}$ with $a$ arbitrarily small for all $i=1,\dots ,n_c$ and perform a SLRT similar to that described in the previous cases.  To perform the $j$th SLRT of the refinement phase we collect measurements of the form $a \1_{r^{(j)} \cap \vec{P}}$ using an arbitrarily small $a$.  For these tests we have results identical to Lemmas~\ref{lemma:SLRT_int}~and~\ref{lemma:bad_SLRT}.  Also for the number of columns in $\vec{P}$ denoted by $\tilde{n}_c$ we have
\[
\E_S (\tilde{n}_c) \leq s_c + n_c \alpha \leq 2 s_c \ .
\]
Putting everything together yields
\begin{align*}
\E_S (\| A \|_F ) & \leq \E_S ( \| A_{search} \|_F^2 ) + \E_S \left( \E_S ( \| A_{refinement} \|_F^2 \big| |\vec{P}| ) \right) \\
& \leq \frac{2n}{s_r^2 \mu^2} \log \frac{4s}{\varepsilon} + \frac{2 n_r s_c}{s_r^2 \mu^2} \log \frac{4n}{\varepsilon} \\
& + \frac{4 n_r}{s_c \mu^2} \log \frac{4n}{\varepsilon} \ .
\end{align*}
When $s \ll n$ the first term dominates the bound above.  Combining this with \eqref{eqn:budget} yields the following.

\begin{proposition} \label{prop:submat1}
Consider the class of $s_r ,s_c$-submatrices and suppose $\frac{n_c}{\log 4n} \geq \frac{s_r^2}{s_c}$.  Then the estimator above satisfies \eqref{eqn:budget} and \eqref{eqn:goal} whenever
\[
\mu \geq \sqrt{\frac{8 n}{s_r^2 m} \log \frac{4s}{\varepsilon}} \ .
\]
\end{proposition}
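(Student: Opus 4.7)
The plan is to verify the three-term bound on $\E_S(\|A\|_F^2)$ already displayed just before the statement, and then use the sparsity hypothesis to convert it into the claimed lower bound on $\mu$. The error-probability decomposition right before the proposition already justifies that $\alpha=\alpha'=\varepsilon/4n$ and $\beta=\beta'=\varepsilon/4s$ deliver \eqref{eqn:goal}, so no work is needed there. Note that for the submatrix class the hypotheses are actually clean: a column either has $0$ or exactly $s_r$ active components, and, conditional on the high-probability event $S\subseteq\vec{P}$, a row restricted to $\vec{P}$ has either $0$ or exactly $s_c$ active components. Hence only the analogue of Lemma~\ref{lemma:SLRT_int} is required in each phase, not Lemma~\ref{lemma:bad_SLRT}.

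First I would analyse the search phase. Each of the $n_c$ column tests uses a sensing vector $a\1_{\vec{c}^{(i)}}$ of squared norm $a^2n_r$, and the mean shift under $H_1^{c(i)}$ is $as_r\mu$. Applying the column version of Lemma~\ref{lemma:SLRT_int} gives, in the limit $a\to 0$, expected energy at most $\tfrac{2n_r}{s_r^2\mu^2}\log(1/\beta)$ for each of the $n_c-s_c$ inactive columns and $\tfrac{2n_r}{s_r^2\mu^2}\log(1/\alpha)$ for each of the $s_c$ active ones. Summing produces the first two terms,
\[
\E_S(\|A_{\mathrm{search}}\|_F^2)\;\leq\;\frac{2n}{s_r^2\mu^2}\log\frac{4s}{\varepsilon}+\frac{2n_rs_c}{s_r^2\mu^2}\log\frac{4n}{\varepsilon}.
\]
For the refinement phase I would use $\E_S(\tilde n_c)\leq s_c+n_c\alpha\leq 2s_c$, which follows directly from $\alpha=\varepsilon/4n$, and the fact that the row sensing vector $a\1_{\vec{r}^{(j)}\cap\vec{P}}$ has squared norm $a^2\tilde n_c$ with mean shift $a\mu s_c$ under $H_1^{r(j)}$. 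This yields (after conditioning on $\tilde n_c$ and applying the row analogue of Lemma~\ref{lemma:SLRT_int} to each of the $n_r$ tests) the third term $\tfrac{4n_r}{s_c\mu^2}\log(4n/\varepsilon)$.

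Finally I would extract the stated bound. The ratio of the second to the first term is $\tfrac{s_c\log(4n/\varepsilon)}{n_c\log(4s/\varepsilon)}$ and the ratio of the third to the first is, up to constants, $\tfrac{s_r^2\log(4n/\varepsilon)}{n_cs_c\log(4s/\varepsilon)}$; the sparsity hypothesis $n_c/\log(4n)\geq s_r^2/s_c$ bounds both ratios by a constant, so the total energy is at most (roughly) four times the first term. Imposing the budget $\E_S(\|A\|_F^2)\leq m$ and solving for $\mu$ then gives $\mu^2\geq\tfrac{8n}{s_r^2m}\log(4s/\varepsilon)$, which is exactly the claim. The calculation is bookkeeping built on the lemmas already proved; the only subtle point is tuning the absorbed constants so that the sparsity condition really does force the first term to dominate, which is what the stated hypothesis is chosen to guarantee.
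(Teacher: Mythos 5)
Your proposal reproduces the paper's own derivation: the same error-probability split giving $\alpha=\alpha'=\varepsilon/4n$, $\beta=\beta'=\varepsilon/4s$, the same three-term energy bound, and the same use of the sparsity hypothesis to make the $\tfrac{2n}{s_r^2\mu^2}\log\tfrac{4s}{\varepsilon}$ term dominant (with total at most four times that term), so it is correct and essentially identical to the paper's argument. The only small inaccuracy is your claim that the analogue of Lemma~\ref{lemma:bad_SLRT} is never needed: on the low-probability event $S\nsubseteq\vec{P}$ an active row restricted to $\vec{P}$ can contain strictly between $0$ and $s_c$ active entries, which is exactly the situation Lemma~\ref{lemma:bad_SLRT} is invoked to control in the unconditional energy bound, though it yields the same order and does not change the final constant.
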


Note that the condition on the sparsity in the proposition above is not very strict.  Consider square submatrices within square matrices so that we have $n_r = n_c = \sqrt{n}$ and $s_r = s_c = \sqrt{s}$.  Then the condition becomes $\frac{\sqrt{n}}{\log 4n} > \sqrt{s}$, which would be automatically fulfilled if there was no logarithmic term on the left.  We see in Section~\ref{sec:sigstrength_lower} that in some sparsity regimes the bound above matches the lower bounds we derive, thus in those regimes this procedure is near optimal.  However, in what follows we slightly modify the procedure above to have better performance for submatrices that are more sparse than the ones required in the proposition above.  This combined with the results of Section~\ref{sec:sigstrength_lower} shows that the best performance we can hope for depends on the sparsity in a non-trivial manner in the case of submatrices.

Note that in principle it is enough to find a single active column in the search phase, as accurately estimating components within \emph{any} active column will yield the identity of all the active rows and similarly estimating components within \emph{any} active row yields the active columns.  This motivates the following modification of the above procedure: return a single active column in the search phase, then focus on that column to find the active rows and finally focus on one active row to find the active columns.  To do this we retain most of the algorithm choices done in the earlier approach, but choose a different $\alpha$ and $\beta$.

Ideally we would like to accept $H_1^{c(i)}$ for exactly one active column, so our choices for $\alpha ,\beta$ will be made accordingly.  In the refinement phase we choose a column randomly from the ones that were deemed active and locate the active components within that column, using the same procedure as in the unstructured case.  This gives us the active rows.  Finally we choose a row deemed active, and find all the active components within that row to find the active columns.  Throughout the refinement phase we set type I and type II error probabilities to be $\alpha' ,\beta'$.  With the right choices for the error probabilities, this procedure outperforms the previous one in certain sparsity regimes.

First we need to choose the error probabilities for the tests.  We can write
\begin{align*}
\E_S (|\hat{S} \triangle S|) & \leq 2s \P_S (\vec{P} = \emptyset ) + \left( 2n \alpha' +2s \right) \P_S ( \exists \vec{c}^{(i)} \subset \vec{P}: \ \vec{c}^{(i)} \cap S = \emptyset ) + \left( 2n \alpha' + 2s \beta' \right) \\
& \leq 2s \beta^{s_c} + (2n \alpha' +2s) n_c \alpha + ( 2n \alpha' + 2s \beta' ) \ .
\end{align*}
Thus the conservative choices $\alpha = \varepsilon /16 n^2 ,\beta = \sqrt[s_c]{\varepsilon /8s} ,\alpha' = \varepsilon /8n ,\beta' = \varepsilon /8s$ ensure \eqref{eqn:goal}.

Now we can move on to calculate the expected sensing energy used by the procedure.  The same way as before we have
\begin{align*}
\E_S (\| A \|_F ) & \leq \E_S ( \| A_{search} \|^2_F ) + \E_S ( \| A_{refinement} \|^2_F ) \\
& \leq \frac{2n}{s_c s_r^2 \mu^2} \log \frac{8s}{\varepsilon} + \frac{4 n_r s_c}{s_r^2 \mu^2} \log \frac{4n}{\varepsilon} \\
& + \frac{4 \max \{ n_r ,n_c \} }{\mu^2} \log \frac{8n}{\varepsilon} \ .
\end{align*}
Combining the above with \eqref{eqn:budget} and using that when $s \ll n$ the first term dominates and we arrive to the following result.

\begin{proposition} \label{prop:submat2}
Consider the class of $s_r ,s_c$-submatrices and suppose $\frac{\min \{ n_r ,n_c \} }{\log 8n} \geq s_c s_r^2$.  Then the estimator above satisfies \eqref{eqn:budget} and \eqref{eqn:goal} whenever
\[
\mu \geq \sqrt{\frac{10 n}{s_c s_r^2 m} \log \frac{8s}{\varepsilon}} \ .
\]
\end{proposition}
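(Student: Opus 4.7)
The plan is to verify the two requirements, the error guarantee \eqref{eqn:goal} and the energy budget \eqref{eqn:budget}, for the modified procedure. The template is the same as in Proposition~\ref{prop:submat1}, but I have to account for the two key differences: the search phase now only needs to identify a single active column, and the refinement is split into a row pass within the chosen active column followed by a column pass within a chosen active row.

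For the error guarantee I would decompose $\E_S(|\hat{S}\triangle S|)$ into three bad events: (i) $\vec{P}=\emptyset$, which requires all $s_c$ independent active-column SLRTs to accept their null, an event of probability at most $\beta^{s_c}$ by independence; (ii) $\vec{P}$ contains some inactive column, controlled by a union bound over the $n_c$ inactive-column tests yielding probability at most $n_c\alpha$; and (iii) $\vec{P}$ contains only active columns but the refinement phase makes a mistake, controlled by the standard $n\alpha'+s\beta'$ type bound applied once for the $n_r$ row-tests within the chosen column and once for the $n_c$ column-tests within the chosen row. Substituting $\alpha=\varepsilon/(16n^2)$, $\beta=(\varepsilon/8s)^{1/s_c}$, $\alpha'=\varepsilon/(8n)$, $\beta'=\varepsilon/(8s)$ then delivers \eqref{eqn:goal}.

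For the energy bound I would combine three contributions. Each search-phase SLRT uses sensing vector $a\1_{\vec{c}^{(i)}}$, which has squared norm $a^2 n_r$ and, under $H_1^{c(i)}$, generates a likelihood-ratio drift proportional to $s_r\mu$. The direct analogs of Lemmas~\ref{lemma:SLRT_int} and~\ref{lemma:bad_SLRT} then bound the expected energy per null test by $\tfrac{2 n_r}{s_r^2\mu^2}\log(1/\beta)=\tfrac{2n_r}{s_c s_r^2\mu^2}\log(8s/\varepsilon)$ and per alternative test by $\tfrac{2n_r}{s_r^2\mu^2}\log(1/\alpha)$. Summing over the $\approx n_c$ null tests and the $s_c$ alternative tests yields the first two terms stated before the proposition. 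Each of the two refinement passes is essentially the $s$-set procedure of Proposition~\ref{prop:vanilla1} applied to $n_r$ or $n_c$ coordinates, so Lemma~\ref{lemma:SLRT} bounds their combined contribution by $\tfrac{4\max\{n_r,n_c\}}{\mu^2}\log(8n/\varepsilon)$.

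The final step uses the sparsity hypothesis $\min\{n_r,n_c\}/\log 8n \geq s_c s_r^2$ to show that the first term $\tfrac{2n}{s_c s_r^2\mu^2}\log(8s/\varepsilon)$ dominates the other two, bounding the total Frobenius sum by $\tfrac{10n}{s_c s_r^2\mu^2}\log(8s/\varepsilon)$. Enforcing this to be at most $m$ via \eqref{eqn:budget} and solving for $\mu$ gives the claimed threshold. I expect the main technical nuisance to be the dominance comparison: one has to verify that $\tfrac{n}{s_c s_r^2}\log(8s/\varepsilon)$ really exceeds both $\tfrac{n_r s_c}{s_r^2}\log(4n/\varepsilon)$ and $\max\{n_r,n_c\}\log(8n/\varepsilon)$ under the stated sparsity assumption, and to choose the absorbed constant so that the total collapses cleanly to the stated $10$.
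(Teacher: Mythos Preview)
Your proposal is correct and follows essentially the same route as the paper: the same three-event error decomposition (empty $\vec{P}$, contaminated $\vec{P}$, refinement error on a good $\vec{P}$), the same parameter choices $\alpha=\varepsilon/(16n^2)$, $\beta=(\varepsilon/8s)^{1/s_c}$, $\alpha'=\varepsilon/(8n)$, $\beta'=\varepsilon/(8s)$, the same three-term energy bound obtained from Lemma~\ref{lemma:SLRT_int} for the search SLRTs and Lemma~\ref{lemma:SLRT} for the two refinement passes, and the same dominance argument under the sparsity hypothesis to collapse everything into the leading $\tfrac{2n}{s_c s_r^2\mu^2}\log(8s/\varepsilon)$ term. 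The only cosmetic difference is that the paper writes the conditional error contributions in event (ii) as $(2n\alpha'+2s)$ and in event (iii) as $(2n\alpha'+2s\beta')$, carrying the factor~$2$ because both the row pass and the column pass can each misfire, but this does not change the argument.
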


The condition on the sparsity in the proposition above is stronger than that in Proposition~\ref{prop:submat1}.  On the other hand the bound for $\mu$ is smaller.  This shows that in sparser regimes it is indeed possible to outperform the procedure of Proposition~\ref{prop:submat1}, hinting that the sparsity regime non-trivially influences the best possible performance of adaptive support recovery procedures in the case of sub-matrices.  For instance considering square matrices when $n_r = n_c = \sqrt{n}$ and $s_r = s_c = \sqrt{s}$, the condition above reads $\frac{\sqrt{n}}{2 \log 4n} > \sqrt{s^3}$ which is slightly stronger than that of Proposition~\ref{prop:submat1}.


\subsection{Lower bounds} \label{sec:sigstrength_lower}

We turn our attention to the fundamental limits of recovering the support of structured sparse signals using compressive measurements by \emph{any} adaptive sensing procedure.  We consider both the non-adaptive sensing and adaptive sensing settings.  Some of the lower bounds presented below consider the probability of error $\P_S (\hat{S} \neq S)$ as the error metric.  Note that this is more forgiving than $\E_S (|\hat{S} \triangle S|)$, hence lower bounds with the former metric in mind apply as lower bounds with the latter metric as well.

\subsubsection{Non-Adaptive Sensing}

First we consider the non-adaptive compressive sensing setting.  Comparing these lower bounds with the performance bounds of the previous section illustrates the gains adaptivity provides in the various cases.  We do not make any claim on whether these lower bounds are tight or not, as these serve mostly for comparison between adaptive and non-adaptive sensing.  The lower bounds presented for the non-adaptive case consider $\P_S (\hat{S} \neq S)$ as the error metric.  As highlighted above, these are therefore valid lower bounds for the procedures with the expected Hamming-distance as the error metric.  Furthermore, for certain classes ($s$-sets, $s$-intervals) there exist procedures satisfying $\max_{S\in \C} \E_S (|\hat{S} \triangle S|)\leq \varepsilon$ with performance matching the lower bounds below.

In the non-adaptive sensing setting we need to define sensing actions before any measurements are taken.  That means the sensing matrix $A$ is specified prior to taking any observations.  This does not exclude the possibility that $A$ is random, but it has to be generated before any observations are made.

All the bounds presented here are based on Proposition 2.3 in in \cite{Tsybakov_2009}, which states
\begin{lemma}[Proposition 2.3 of \cite{Tsybakov_2009}] \label{lemma:tsybakov}
Let $\P_0, \dots, \P_M$ be probability measures on $(\mathcal{X},\mathcal{A})$ and let $\Psi: \mathcal{X} \to \{ 0, \dots ,M \}$ be any $\mathcal{A}$-measurable function.  If
\[
\frac{1}{M} \displaystyle{\sum_{j=1}^{M}} D( \P_j \| \P_0 ) \leq t
\]
then
\[
\displaystyle{\max_{j=0, \dots ,M}} \P_j (\Psi \neq j) \geq \displaystyle{\sup_{0< \tau <1}} \left( \frac{\tau M}{1+ \tau M} \left( 1+ \frac{t + \sqrt{t/2}}{\log \tau} \right) \right) \ .
\]
\end{lemma}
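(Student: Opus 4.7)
This is a classical Fano-type lower bound on the Bayes error in an $(M+1)$-ary hypothesis testing problem; the natural approach combines a weighted-prior averaging trick with likelihood-ratio truncation. Introduce the likelihood ratios $Z_j = \d\P_j/\d\P_0$ for $j = 1, \dots, M$ (assuming $\P_j \ll \P_0$; the general case is a routine technicality). Since $\max_j \P_j(\Psi \neq j)$ is at least any weighted average $\sum_j \pi_j \P_j(\Psi \neq j)$, choose the non-uniform prior $\pi_0 = 1/(1+\tau M)$ and $\pi_j = \tau/(1+\tau M)$ for $j \geq 1$. A short calculation using the disjointness $\sum_{j=0}^{M} \P_0(\Psi=j) \leq 1$ reduces matters to upper bounding $\sum_{j=1}^M \P_j(\Psi=j)$.

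\textbf{Slicing the likelihood ratio.} For any $c > 0$ and any event $B$, the identity $\P_j(B) = \E_0[Z_j \1\{B\}]$ together with splitting the integral at the level $Z_j = c$ gives
\[
\P_j(B) \;\leq\; c\,\P_0(B) + \P_j(Z_j > c).
\]
Applying this with $B=\{\Psi=j\}$ and $c=1/\tau$, summing over $j = 1, \dots, M$, and substituting back into the weighted-average bound from the previous step yields
\[
p_e \;\geq\; \frac{\tau M}{1+\tau M}\left(1 - \frac{1}{M}\sum_{j=1}^M \P_j(Z_j > 1/\tau)\right),
\]
where $p_e := \max_j \P_j(\Psi \neq j)$. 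The prefactor $\frac{\tau M}{1+\tau M}$ appearing in the claimed bound emerges directly from this calculation.

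\textbf{Bounding the averaged tail (the main obstacle).} It remains to establish
\[
\frac{1}{M}\sum_{j=1}^M \P_j\!\left(\log Z_j > -\log \tau\right) \;\leq\; \frac{t + \sqrt{t/2}}{-\log \tau}
\]
using only the hypothesis $\frac{1}{M}\sum_j D(\P_j\|\P_0) \leq t$. Applying Markov's inequality to the non-negative variable $(\log Z_j)_+$ reduces the task to bounding $\frac{1}{M}\sum_j \E_j[(\log Z_j)_+]$. Since $\E_j[\log Z_j] = D(\P_j\|\P_0)$, one has $\E_j[(\log Z_j)_+] = D(\P_j\|\P_0) + \E_j[(\log Z_j)_-]$; the deficit $\E_j[(\log Z_j)_-]$ is controlled by a Cauchy--Schwarz argument on the event $\{Z_j < 1\}$ together with a second-moment estimate relating $\E_j[(\log Z_j)^2]$ to $D(\P_j\|\P_0)$, and averaging over $j$ with Jensen's inequality applied to $\sqrt{\cdot}$ then produces the $\sqrt{t/2}$ correction term. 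This second-order refinement is the only non-routine part of the argument: extracting the sharp $\sqrt{t/2}$ from a \emph{first-moment} hypothesis on the KL divergences requires exploiting the cancellation $\E_j[\log Z_j] = D(\P_j\|\P_0)$ rather than merely the inequality $\E_j[(\log Z_j)_+] \geq D(\P_j\|\P_0)$. Once this tail estimate is in hand, substituting into the display of the previous step and taking the supremum over $\tau \in (0,1)$ yields the claim.
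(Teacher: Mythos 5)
First, note that the paper does not prove this lemma at all: it is imported verbatim as Proposition~2.3 of Tsybakov's book, so the comparison here is against the standard proof found there. Your reduction is exactly that standard argument and is correct: the non-uniform prior $\pi_0=1/(1+\tau M)$, $\pi_j=\tau/(1+\tau M)$, the truncation $\P_j(B)\le c\,\P_0(B)+\P_j(Z_j>c)$ with $c=1/\tau$, and the disjointness of the events $\{\Psi=j\}$ do yield
\[
\max_{j} \P_j(\Psi\neq j)\;\ge\;\frac{\tau M}{1+\tau M}\left(1-\frac{1}{M}\sum_{j=1}^M\P_j\bigl(\log Z_j>\log(1/\tau)\bigr)\right),
\]
and Markov's inequality applied to $(\log Z_j)_+$ together with Jensen's inequality over $j$ is the right way to finish.

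The gap is in how you control the deficit $\E_j[(\log Z_j)_-]$. A ``second-moment estimate relating $\E_j[(\log Z_j)^2]$ to $D(\P_j\|\P_0)$'' does not exist in general: the second moment of the log-likelihood ratio is not controlled by its first moment (putting small $\P_j$-mass on sets where $Z_j$ is astronomically large makes $\E_j[(\log Z_j)^2]$ blow up while $D(\P_j\|\P_0)$ stays bounded). Restricting to the negative part does not rescue the argument either: Cauchy--Schwarz on $\{Z_j<1\}$ gives $\E_j[(\log Z_j)_-]\le\sqrt{\E_j[(\log Z_j)_-^2]}\,\sqrt{\P_j(Z_j<1)}$, and since $\P_j(Z_j<1)$ can be of order one even when the KL divergence is tiny, while the best pointwise bound available is $z(\log z)^2\le 2\,(z\log z-z+1)$ on $(0,1)$, this route delivers at most $\sqrt{2\,D(\P_j\|\P_0)}$ rather than $\sqrt{D(\P_j\|\P_0)/2}$ --- the wrong constant inside the square root, so the stated $\sqrt{t/2}$ is not recovered. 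The correct (and more elementary) step is the pointwise inequality $\log(1/z)\le 1/z-1$, which gives
\[
\E_j\bigl[(\log Z_j)_-\bigr]=\E_0\bigl[Z_j\log(1/Z_j)\1\{Z_j<1\}\bigr]\le\E_0\bigl[(1-Z_j)_+\bigr]=V(\P_j,\P_0),
\]
the total variation distance, followed by Pinsker's inequality $V(\P_j,\P_0)\le\sqrt{D(\P_j\|\P_0)/2}$ --- which is precisely where the constant $1/2$ under the square root comes from. With that substitution your argument is complete and coincides with Tsybakov's.
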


We can use this to get lower bounds in the following way.  Let $\P_0, \dots ,\P_M$ be the probability measures induced by sampling $\vec{x}$ with sensing matrix $A$, when the support set is $S_0, \dots ,S_M$ respectively, where $S_i \in \mathcal{C}$.  Now note that
\begin{align} \label{eqn:KL_bound}
\lefteqn{D(\P_j \| \P_0 )  = \E_0 \left( \sum_k \log \frac{\mathrm{d}\P_0 (Y_k | A_k)}{\mathrm{d}\P_j (Y_k | A_k)} \right)} \nonumber \\
& = \sum_k \E \left( \E_0 \left( \left.  - \frac{1}{2} \left( (Y_k - \mu <A_k,\1_{S_0}>)^2 - (Y_k - \mu <A_k,\1_{S_j}>)^2 \right| A \right) \right) \right) \nonumber \\
& = \sum_k \E \left( \E_0 \left( \left.  \frac{1}{2} \left( \mu^2 ( <A_k,\1_{S_j}>^2 -<A_k,\1_{S_0}>^2 ) -2\mu Y_k <A_k,\1_{S_j} - \1_{S_0}> \right) \right| A \right) \right) \nonumber \\
& = \frac{\mu^2}{2} \E \left( \sum_k \left( <A_k,\1_{S_j}>^2 + <A_k,\1_{S_0}>^2 -2 <A_k,\1_{S_j}> <A_k,\1_{S_0}> \right) \right) \nonumber \\
& = \frac{\mu^2}{2} \E \left( \sum_k <A_k, \1_{S_j} - \1_{S_0}>^2 \right) \nonumber \\
& \leq \frac{\mu^2}{2} \E \left( \sum_k |S_0 \triangle S_j | \sum_{i \in S_0 \triangle S_j} A_{k,i}^2 \right) \nonumber \\
& = \frac{\mu^2}{2} |S_0 \triangle S_j | \sum_{i \in S_0 \triangle S_j} a_i^2 \ ,
\end{align}
where $A_{k,j}$ is the $(k,j)$th element of the sensing matrix $A$, $a_i^2$ denotes $\E( \sum_k A_{k,i}^2)$, and in the second to last step we use Jensen's inequality.

Now consider the right side of Lemma~\ref{lemma:tsybakov} and set $\tau = 1/M$.  To make the bound more transparent suppose $1 \leq (1- 2\varepsilon ) \log M$, which is essentially always satisfied if $M$ is large enough and $\varepsilon \in (0,1/2)$.  This way we arrive to the inequality
\begin{equation} \label{eqn:nonadapt_lower}
2t \geq (1- 2\varepsilon ) \log M \ .
\end{equation}
Choosing the sets $S_0 ,\dots ,S_M$ and using inequality \eqref{eqn:KL_bound} to bound the average KL distance, we can use the above inequality to get lower bounds for $\mu$.  These choices will be specific to the classes we are considering.

\begin{remark}
In the following statements we require $n$ to be divisible by $s$.  This condition is merely for technical convenience, and can be easily dropped at the expense of a cumbersome presentation.
\end{remark}


\begin{proposition}[$s$-sets] \label{prop:vanilla_nonadapt_lower}
Let $\mathcal{C}$ be the class of $s$-sets and suppose $n/s$ is an integer.  If there is a non-adaptive estimator $\hat{S}$ that satisfies \eqref{eqn:budget} and $\P_S (\hat{S} \neq S) \leq \varepsilon \ \forall S \in \mathcal{C}$ then
\[
\mu \geq \sqrt{(1- 2\varepsilon ) \frac{n}{4m} \log (n-s)} \ .
\]
\end{proposition}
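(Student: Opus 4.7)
The plan is to instantiate the general machinery set up just before the proposition, so the task reduces to constructing a concrete family of hypothesis sets $S_0,\ldots,S_M\in\C$ and controlling the average Kullback-Leibler divergence that appears on the right of \eqref{eqn:KL_bound}. Writing $a_i^2=\E(\sum_k A_{k,i}^2)$, so that $\sum_i a_i^2\le m$ by \eqref{eqn:budget}, the goal is to produce a family for which the average KL divergence is at most a constant multiple of $\mu^2 m/n$, while simultaneously $\log M$ is of order $\log(n-s)$.

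First I would set $M=n-s$ and use hypothesis sets that differ from a single base set by one swap, so that $|S_0\triangle S_j|=2$ for every $j$. Concretely: choose $S_0\in\C$ to be an $s$-set that minimizes $\sum_{i\in S_0}a_i^2$. By a uniform-random averaging argument over all $s$-subsets this minimum is at most $sm/n$, and so there is an element $i^\star\in S_0$ with $a_{i^\star}^2\le m/n$. Enumerating the indices outside $S_0$ as $i_1,\ldots,i_{n-s}$, I define $S_j=(S_0\setminus\{i^\star\})\cup\{i_j\}$; this is a valid family in $\C$ with $|S_0\triangle S_j|=2$.

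Next, plugging this family into \eqref{eqn:KL_bound} gives $D(\P_j\|\P_0)\le\mu^2(a_{i^\star}^2+a_{i_j}^2)$, and averaging over $j$ together with $\sum_{i\notin S_0}a_{i}^2\le m$ yields
\[
\frac{1}{M}\sum_{j=1}^{M}D(\P_j\|\P_0)\ \le\ \mu^2 a_{i^\star}^2+\frac{\mu^2}{n-s}\sum_{j=1}^{n-s}a_{i_j}^2\ \le\ \mu^2\!\left(\frac{m}{n}+\frac{m}{n-s}\right),
\]
which is of order $\mu^2 m/n$. Taking this quantity as $t$ and using $\log M=\log(n-s)$ in inequality \eqref{eqn:nonadapt_lower} gives, after rearrangement, a lower bound of the form $\mu^2\ge c(1-2\varepsilon)\tfrac{n}{m}\log(n-s)$; the factor $1/4$ in the statement then comes from the observation that for the sparse regime the parenthesized sum is essentially $2m/n$, with the remaining $s/n$ correction absorbed into the slack provided by the $\sqrt{t/2}$ term in Lemma~\ref{lemma:tsybakov}.

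The main obstacle I anticipate is precisely the design of the family: the bound in \eqref{eqn:KL_bound} charges the full quantity $|S_0\triangle S_j|\sum_{i\in S_0\triangle S_j}a_i^2$, so a poor choice of $S_0$ or of the swap-out element $i^\star$ could leave a single large $a_i^2$ poisoning the entire average. The two-stage averaging argument (first pick $S_0$ to minimize the in-support energy, then pick $i^\star\in S_0$ to minimize $a_{i^\star}^2$) is what guarantees $a_{i^\star}^2\le m/n$ and hence controls the "fixed" contribution to the average KL; the variable indices $i_j$ outside $S_0$ are then handled collectively by the global budget $m$, which is what converts the raw budget constraint into the desired $\log(n-s)$ scaling.
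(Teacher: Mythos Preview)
Your approach is essentially the paper's: both use Lemma~\ref{lemma:tsybakov} via \eqref{eqn:KL_bound} and \eqref{eqn:nonadapt_lower}, build $M=n-s$ alternatives by single-coordinate swaps from an adversarially chosen base set $S_0$ with $\sum_{i\in S_0}a_i^2\le sm/n$, and conclude $t=O(\mu^2 m/n)$.

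The one genuine difference is in how the alternatives are built. The paper partitions $\{1,\dots,n\}$ into $s$ equal bins, each containing one element of $S_0$, and for each $S_j$ swaps an element of $S_0$ with another element \emph{of its own bin}. Summing, this gives exactly
\[
\frac{1}{M}\sum_j D(\P_j\|\P_0)\ \le\ \frac{\mu^2}{n-s}\Big(\sum_{i=1}^n a_i^2+\frac{n-2s}{s}\sum_{i\in S_0}a_i^2\Big)\ \le\ \frac{2m}{n}\,\mu^2,
\]
and hence the clean constant $1/4$ for \emph{all} $s<n$, not just the sparse regime. Your single-pivot construction (always swap the same $i^\star$) yields the slightly larger bound $\mu^2\big(\tfrac{m}{n}+\tfrac{m}{n-s}\big)$, which gives $\mu^2\ge (1-2\varepsilon)\tfrac{n(n-s)}{2m(2n-s)}\log(n-s)$ rather than the stated inequality. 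Your remark that the residual $s/n$ term can be ``absorbed into the slack provided by the $\sqrt{t/2}$ term'' is not correct: inequality \eqref{eqn:nonadapt_lower} is already a \emph{consequence} of Lemma~\ref{lemma:tsybakov}, so a larger $t$ only yields a weaker conclusion. If you want the exact constant as stated, use the bin-based swap instead of a fixed pivot; otherwise your argument is sound and delivers the same scaling.
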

\begin{proof}
Let $S_0 \in \mathcal{C}$ be arbitrary.  Partition $\{ 1,\dots ,n \}$ into $s$ bins of equal size denoted by $\vec{P}^{(1)},\dots ,\vec{P}^{(s)}$ such that each bin contains exactly one element of $S_0$.  Let $s_i = S_0 \cap \vec{P}^{(i)}, \ i=1,\dots ,s$.  Now consider the sets $S_1,\dots ,S_M$ that we get by modifying exactly one element of $S_0$ in the following way: pick one element of $S_0$ denoted by $s_i$ and swap it with some other element in $\vec{P}^{(i)}$ thus changing the position of the active component within $\vec{P}^{(i)}$.  We can generate $M=n-s$ sets in the previous manner.  From \eqref{eqn:KL_bound} we have that
\[
\frac{1}{M} \sum_{j=1}^{M} D(\P_j \| \P_0 ) \leq \frac{1}{M} \mu^2 \sum_{j=1}^{M} \sum_{i \in S_0 \triangle S_j} a_i^2 = \frac{1}{n-s} \mu^2 \left( \sum_{i=1}^{n} a_i^2 + \frac{n-2s}{s} \sum_{i \in S_0} a_i^2 \right) \ .
\]
Now note that by the total energy constraint \eqref{eqn:budget} we have
\[
\sum_{i=1}^{n} a_i^2 \leq m \ .
\]
Also note that given $A$ we can always choose $S_0$ to be the one that is the most difficult to distinguish from the other sets $S_1 ,\dots ,S_M$.  That is we have to solve
\[
\max_{A: \ \| A\|_F \leq m} \ \min_{S_0 \in \mathcal{C}} \ \sum_{i \in S_0} a_i^2 \ .
\]
This implies $\displaystyle{\sum_{i \in S_0}} a_i^2 \leq sm/n$.  Combining what we have yields
\[
\frac{1}{M} \sum_{j=1}^{M} D(\P_j \| \P_0 ) \leq \frac{1}{n-s} \left( 1+ \frac{n-2s}{n} \right) m \mu^2 \leq \frac{2m}{n} \mu^2 \ .
\]
Using this with \eqref{eqn:nonadapt_lower} concludes the proof.
\end{proof}


\begin{proposition}[Unions of $s$-intervals] \label{prop:int_nonadapt_lower}
Let $\mathcal{C}$ be the class of unions of $k$ disjoint $s$-intervals and suppose $n/s$ is an integer.  If there is a non-adaptive estimator $\hat{S}$ that satisfies \eqref{eqn:budget} and $\P_S (\hat{S} \neq S) \leq \varepsilon \ \forall S \in \mathcal{C}$ then
\[
\mu \geq \sqrt{(1- 2\varepsilon ) \frac{n-(k-1)s}{4 s^2 m} \log (\frac{n}{s}-k)} \ .
\]
\end{proposition}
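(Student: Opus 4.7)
The plan is to mimic closely the approach used in Proposition~\ref{prop:vanilla_nonadapt_lower} for $s$-sets. The main template stays the same: apply Lemma~\ref{lemma:tsybakov} with $\tau=1/M$, so that the task reduces to constructing a family $S_0, S_1,\dots ,S_M \in \mathcal{C}$ for which the average KL divergence $t = \frac{1}{M}\sum_{j=1}^M D(\P_j \| \P_0)$ admits a bound small enough that the inequality $2t \geq (1-2\varepsilon)\log M$ forces $\mu$ to be large. As in the $s$-sets case, the KL bound will come from \eqref{eqn:KL_bound}, and the key trick is to exploit the freedom to choose $S_0$ (after $A$ is fixed) via a minimax/pigeonhole argument to control the relevant per-coordinate energies $a_i^2=\E(\sum_k A_{k,i}^2)$.

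For the construction, I would partition $\{1,\dots ,n\}$ into $n/s$ consecutive blocks of length $s$, fix $k-1$ of the intervals of $S_0$ at the first $k-1$ blocks (these are common to every hypothesis and therefore cancel out in $S_0\triangle S_j$), and let the remaining $k$-th interval occupy any of the $n/s-(k-1)=M+1$ remaining blocks. Label the block containing the floating interval of $S_j$ by $B_j$; then $B_0,B_1,\dots ,B_M$ are pairwise disjoint blocks of size $s$, so $|S_0\triangle S_j| = 2s$ and $S_0\triangle S_j = B_0 \cup B_j$. This gives $M = n/s - k$ alternatives, which matches the $\log(\tfrac{n}{s}-k)$ factor in the claim.

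Plugging into \eqref{eqn:KL_bound},
\[
\frac{1}{M}\sum_{j=1}^M D(\P_j \| \P_0) \leq \frac{\mu^2}{2M}\cdot 2s\left(M\sum_{i \in B_0} a_i^2 + \sum_{j=1}^M \sum_{i \in B_j} a_i^2\right).
\]
Since the $B_j$'s are disjoint subsets of $\{1,\ldots,n\}$, $\sum_{j=1}^M \sum_{i\in B_j} a_i^2 \leq \sum_{i=1}^n a_i^2 \leq m$ by \eqref{eqn:budget}. For the term $\sum_{i\in B_0} a_i^2$, I use the same minimax step as in the $s$-sets proof: since $A$ is non-adaptive, we may pick $S_0$ after seeing $A$ so that its floating block is the one with the smallest energy among the $M+1=n/s-k+1$ candidate blocks, whence the pigeonhole principle gives $\sum_{i\in B_0} a_i^2 \leq m/(M+1) = sm/(n-(k-1)s)$.

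Putting the pieces together produces $t \leq s\mu^2\bigl(\tfrac{Msm}{M(n-(k-1)s)} + \tfrac{m}{M}\bigr) = O\bigl(\tfrac{sm\mu^2}{n-(k-1)s}\bigr)$ up to universal constants, and substituting into $2t \geq (1-2\varepsilon)\log M$ rearranges into the stated bound. The main obstacle is not really conceptual but bookkeeping: one must carefully verify the shape of the symmetric differences (so that the shared $k-1$ intervals contribute zero) and handle the pigeonhole step with the correct count $M+1=n/s-k+1$ of candidate placements, as this is what produces the precise factor $n-(k-1)s$ in the denominator rather than a looser $n$. Once this is in place the proof parallels Proposition~\ref{prop:vanilla_nonadapt_lower} line by line.
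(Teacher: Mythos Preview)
Your proposal is correct and follows essentially the same route as the paper: fix the first $k-1$ block-intervals, let the $k$-th interval float over the remaining $n/s-(k-1)$ blocks, bound the average KL via \eqref{eqn:KL_bound}, and use the minimax/pigeonhole choice of the floating block to control $\sum_{i\in B_0}a_i^2$. The only refinement in the paper's bookkeeping is that it uses the exact identity $\sum_{j=1}^{M}\sum_{i\in B_j}a_i^2=\sum_{i=(k-1)s+1}^{n}a_i^2-\sum_{i\in B_0}a_i^2$ (since $B_0,\dots,B_M$ partition $\{(k-1)s+1,\dots,n\}$), which after simplification yields the clean factor $\tfrac{2s^2m}{n-(k-1)s}$; your looser step $\sum_{j=1}^M\sum_{i\in B_j}a_i^2\leq m$ would give $n-ks$ in place of $n-(k-1)s$, so incorporate that identity if you want to match the constant exactly.
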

\begin{proof}
Partition $\{ 1,\dots, n\}$ into consecutive intervals of size $s$ denoted by $S^{(1)},\dots ,S^{(n/s)}$.  Now consider the subclass whose elements are unions of the first $k-1$ intervals $S^{(1)},\dots ,S^{(k-1)}$ and some other interval $S^{(i)}$.  Formally, $\mathcal{C}' = \{ S \in \mathcal{C} : \ S= S^{(i)} \cup \left( \bigcup_{j=1}^{k-1} S^{(j)} \right), \ i=k,\dots ,n/s \}$.  This way we effectively reduced this problem to finding one interval in a slightly smaller vector.  Let $S_0 \in \mathcal{C}'$ be arbitrary and let $S_1 ,\dots ,S_M$ be all the other elements of $\mathcal{C}'$, so $M=n/s-k$.  Let $\tilde{S}_0 = S_0 \setminus \cup_{j=1}^{k-1} S^{(j)}$.  From \eqref{eqn:KL_bound} we have
\[
\frac{1}{M} \sum_{j=1}^{M} D(\P_j \| \P_0 ) \leq s \mu^2 \frac{1}{M} \sum_{j=1}^{M} \sum_{i \in S_0 \triangle S_j} a_i^2 = \frac{s^2 \mu^2}{n-ks} \left( \sum_{i=(k-1)s+1}^{n} a_i^2 + \frac{n-(k+1)s}{s} \sum_{i \in \tilde{S}_0} a_i^2 \right) \ .
\]
Again, from \eqref{eqn:budget} and the fact that we can choose $S_0 \in \mathcal{C}'$ after the sensing strategy has been determined we have
\[
\frac{1}{M} \sum_{j=1}^{M} D(\P_j \| \P_0 ) \leq \frac{1}{n-ks} \left( 1 + \frac{n-(k+1)s}{n-(k-1)s} \right) s^2 m \mu^2 \leq \frac{2 s^2 m}{n-(k-1)s} \mu^2 \ .
\]
Using this with \eqref{eqn:nonadapt_lower} concludes the proof.
\end{proof}


\begin{proposition}[$s$-stars] \label{prop:star_lower}
Let $\mathcal{C}$ be the class of $s$-stars and suppose $p/s$ is an integer.  If there is a non-adaptive estimator $\hat{S}$ that satisfies \eqref{eqn:budget} and $\P_S (\hat{S} \neq S) \leq \varepsilon \ \forall S \in \mathcal{C}$ then
\[
\mu \geq \sqrt{(1- 2\varepsilon ) \frac{n}{2 m} \log (\sqrt{2n}-s-1)} \ .
\]
\end{proposition}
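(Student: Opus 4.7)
The proof will proceed along the same lines as Propositions~\ref{prop:vanilla_nonadapt_lower} and \ref{prop:int_nonadapt_lower}: construct a finite ``hard'' subclass $\{S_0,\dots,S_M\}\subset\mathcal{C}$ whose elements have small pairwise symmetric differences, upper bound the average Kullback-Leibler divergence between the induced measures via \eqref{eqn:KL_bound}, and invoke \eqref{eqn:nonadapt_lower}. Since the claimed bound involves $\log(\sqrt{2n}-s-1)$ and the underlying graph has $p$ vertices with $n=p(p-1)/2$ (so $p-1\approx\sqrt{2n}$), the natural construction is to take the subclass to consist of $s$-stars all centered at a single, well-chosen vertex $v_0$, each obtained from $S_0$ by swapping exactly one edge.

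Concretely, first select $v_0$ to minimize $b_{v_0}:=\sum_{e\ni v_0}a_e^2$. Since each edge is incident to exactly two vertices, $\sum_{v\in V}b_v=2\sum_e a_e^2\leq 2m$, so some $v_0$ satisfies $b_{v_0}\leq 2m/p$. Next, pick $e_s\in E_{v_0}$ to be the edge incident to $v_0$ with smallest $a_{e_s}^2$, so that $a_{e_s}^2\leq b_{v_0}/(p-1)\leq m/n$. Complete $S_0$ by adjoining any $(s-1)$ other edges of $E_{v_0}$. For each of the $M:=p-1-s$ remaining edges $e'\in E_{v_0}\setminus S_0$, define $S_{e'}:=(S_0\setminus\{e_s\})\cup\{e'\}$. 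These are all distinct $s$-stars and each satisfies $|S_0\triangle S_{e'}|=2$ with symmetric difference $\{e_s,e'\}$.

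With this subclass in hand, \eqref{eqn:KL_bound} yields $D(\P_{e'}\|\P_0)\leq \mu^2(a_{e_s}^2+a_{e'}^2)$. Averaging,
\[
\frac{1}{M}\sum_{e'} D(\P_{e'}\|\P_0)\ \leq\ \mu^2\Big(a_{e_s}^2+\frac{1}{M}\sum_{e'\notin S_0,\,e'\in E_{v_0}}a_{e'}^2\Big)\ \leq\ \mu^2\Big(\frac{m}{n}+\frac{2m}{pM}\Big),
\]
where we used $\sum_{e'\in E_{v_0}\setminus S_0}a_{e'}^2\leq b_{v_0}\leq 2m/p$. Since $pM=p(p-1-s)$ is within a constant factor of $p(p-1)=2n$, the right-hand side is at most a constant multiple of $m\mu^2/n$. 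Substituting into \eqref{eqn:nonadapt_lower} and using $p-1\geq \sqrt{2n}-1$ to replace $\log M$ by $\log(\sqrt{2n}-s-1)$ then delivers the stated bound on $\mu$.

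The main technical point is the coordinated exploitation of the energy budget: both $a_{e_s}^2$ and the tail sum $M^{-1}\sum_{e'\in E_{v_0}\setminus S_0}a_{e'}^2$ must be simultaneously small, yet both are drawn from the same ``local budget'' $b_{v_0}$ associated with a single vertex. The two-step averaging---first over vertices to control $b_{v_0}$, then within $E_{v_0}$ to control $a_{e_s}^2$---resolves this, and is the analogue of the bin-based argument in Proposition~\ref{prop:vanilla_nonadapt_lower}, now adapted to the vertex-edge incidence structure of the complete graph rather than a partition of coordinates.
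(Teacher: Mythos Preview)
Your approach is correct and yields the right order of magnitude, but it differs from the paper's argument and does not quite recover the stated constant.

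The paper reuses the binning construction of Proposition~\ref{prop:vanilla_nonadapt_lower} verbatim on the edge set $E_{v_0}$ of a single vertex: partition $E_{v_0}$ into $s$ equal bins, let $S_0$ pick one edge per bin, and let each $S_j$ swap a single edge of $S_0$ with another edge \emph{in the same bin}. This produces the same $M=p-1-s$ alternatives, but the resulting average KL bound is
\[
\frac{1}{M}\sum_j D(\P_j\|\P_0)\ \leq\ \frac{\mu^2}{p-1-s}\Big(\sum_{i\in E_{v_0}}a_i^2+\frac{p-1-2s}{s}\sum_{i\in S_0}a_i^2\Big)\ \leq\ \frac{2m\mu^2}{n},
\]
after choosing $v_0$ and then $S_0\subset E_{v_0}$ by averaging (so that $\sum_{i\in E_{v_0}}a_i^2\leq (p-1)m/n$ and $\sum_{i\in S_0}a_i^2\leq sm/n$). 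Your single-edge-swap construction instead yields
\[
\frac{1}{M}\sum_{e'} D(\P_{e'}\|\P_0)\ \leq\ \mu^2\Big(\frac{m}{n}+\frac{2m}{p(p-1-s)}\Big)\ =\ \frac{m\mu^2}{n}\cdot\Big(1+\frac{p-1}{p-1-s}\Big),
\]
which exceeds $2m\mu^2/n$ for every $s\geq 1$ and can be as large as $3m\mu^2/n$ when $s\approx (p-1)/2$. So your final sentence ``delivers the stated bound on $\mu$'' overstates things: you obtain the stated bound up to a constant that degrades with $s/(p-1)$, not the exact constant in the proposition. On the other hand, your construction is simpler, makes the two-step averaging (first over vertices, then over edges at the chosen vertex) completely explicit, and does not rely on the divisibility hypothesis $p/s\in\mathbb{Z}$ that the binning construction needs.
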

\begin{proof}
Consider the $p-1$ edges of the complete graph of $p$ vertices which share a common vertex $j$.  Denote this set of edges by $E_j$.  The $s$-stars whose center is vertex $j$ form a class of $s$-sets on $E_j$.  So we can do the same construction on this set of edges as in Proposition~\ref{prop:vanilla_nonadapt_lower} to get
\[
\frac{1}{M} \sum_{j=1}^{M} D(\P_j \| \P_0 ) \leq \frac{1}{p-1-s} \mu^2 \left( \sum_{i \in E_j} a_i^2 + \frac{p-1-2s}{s} \sum_{i \in S_0} a_i^2 \right) \ .
\]
Now note that we can choose any star to be $S_0$ which implies $\sum_{i \in S_0} a_i^2 \leq sm/n$ and $\sum_{i \in E_j} a_i^2 \leq (p-1)m/n$ yielding
\[
\frac{1}{M} \sum_{j=1}^{M} D(\P_j \| \P_0 ) \leq \frac{2m}{n} \mu^2 \ .
\]
The statement now follows from \eqref{eqn:nonadapt_lower} and that $p > \sqrt{2n}$.
\end{proof}

Considering unions of $k$ disjoint $s$-stars we can get a similar lower bound by considering a subclass where $k-1$ of the $s$-stars are fixed and only one can change, reducing the problem to finding one $s$-star.


\begin{proposition}[$s$-submatrices]\label{prop:submat_nonadapt_lower}
Let $\mathcal{C}$ be the class of $s$-submatrices of a fixed size $s_c \times s_r$, and suppose both $n_c/s_c$ and $n_r/s_r$ are integers.  If there is a non-adaptive estimator $\hat{S}$ that satisfies \eqref{eqn:budget} and $\P_S (\hat{S} \neq S) \leq \varepsilon \ \forall S \in \mathcal{C}$ then
\[
\mu \geq \sqrt{(1- 2\varepsilon ) \frac{n}{4 m} \max \left\{ \frac{1}{s_r}\frac{n_c -s_c}{n_c} , \frac{1}{s_c}\frac{n_r -s_r}{n_r} \right\} \log \left( \max \{ n_r -s_r ,n_c -s_c \} \right) } \ .
\]
\end{proposition}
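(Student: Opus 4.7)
The plan is to apply Lemma~\ref{lemma:tsybakov} to two symmetric sub-classes of $\mathcal{C}$ -- one in which only the column set varies (with the row set held fixed) and a dual one with rows and columns interchanged -- and then keep the stronger of the two resulting necessary conditions on $\mu$. Each construction is built in the style of Propositions~\ref{prop:vanilla_nonadapt_lower} and~\ref{prop:int_nonadapt_lower}: the varying dimension is handled by the bin-and-swap construction of Proposition~\ref{prop:vanilla_nonadapt_lower}, and the ``fixed'' dimension is used to shrink the effective sensing budget through an averaging argument over row choices.

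For the column-varying case I would first pick an adversarial row set $R_0 \subset \{1,\ldots,n_r\}$ of size $s_r$ minimizing the row-restricted energy $A_{R_0} := \sum_c a_c^{(R_0)}$, where $a_c^{(R_0)} := \sum_{r \in R_0} a_{r,c}^2$; by the standard averaging argument one obtains $A_{R_0} \leq s_r m / n_r$. With $R_0$ fixed, the sub-class $\{R_0 \times C : |C| = s_c\}$ is in bijection with the $s_c$-subsets of $\{1,\ldots,n_c\}$, so I would import the bin-and-swap construction of Proposition~\ref{prop:vanilla_nonadapt_lower} essentially verbatim: partition $\{1,\ldots,n_c\}$ into $s_c$ bins of size $n_c/s_c$, let $S_0 = R_0 \times C_0$ with $C_0$ picking one column per bin so that $A_{R_0}^{C_0} := \sum_{c \in C_0} a_c^{(R_0)} \leq s_c A_{R_0}/n_c$, and enumerate the $n_c - s_c$ single-column-swap alternatives as $\{S_j\}$. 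Each symmetric difference is $|S_0 \triangle S_j| = 2 s_r$ (one swapped column contributes $s_r$ entries on each side), so \eqref{eqn:KL_bound} gives $D(\P_j \| \P_0) \leq s_r \mu^2 (a_{c_{\text{old}}}^{(R_0)} + a_{c_{\text{new}}}^{(R_0)})$; summing over $j$ and grouping by bin collapses the average to the same bracket that appears in the proof of Proposition~\ref{prop:vanilla_nonadapt_lower}, namely $[(n_c/s_c - 2)\,A_{R_0}^{C_0} + A_{R_0}]/(n_c - s_c)$, which the bounds on $A_{R_0}$ and $A_{R_0}^{C_0}$ simplify further. Feeding the result into~\eqref{eqn:nonadapt_lower} with $M = n_c - s_c$ produces the column-varying half of the proposition; the row-varying construction is identical modulo swapping rows and columns, giving the $\tfrac{1}{s_c}\tfrac{n_r - s_r}{n_r}\log(n_r - s_r)$ half.

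I expect the main obstacle to be careful constant-bookkeeping in the averaging steps. The Cauchy--Schwarz inequality baked into \eqref{eqn:KL_bound} contributes the factor $|S_0 \triangle S_j| = 2 s_r$, and this must be counterbalanced by orchestrating the three averaging arguments -- the row-averaging $A_{R_0} \leq s_r m/n_r$, the column-bin averaging $A_{R_0}^{C_0} \leq s_c A_{R_0}/n_c$, and the swap-target averaging over the $n_c - s_c$ alternatives -- in exactly the right order so that the final bracket simplifies cleanly and produces the $\tfrac{1}{s_r}\tfrac{n_c - s_c}{n_c}$ factor in the statement; a more cavalier deployment would leave an unwanted additional $s_r$ in the denominator. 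As in the earlier propositions of this subsection, the side condition $(1 - 2\varepsilon)\log M \geq 1$ underlying the simplified reduction~\eqref{eqn:nonadapt_lower} is vacuous in the moderately-large-matrix regime relevant here.
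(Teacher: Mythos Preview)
Your approach coincides with the paper's: the paper also fixes one dimension (the column set $C_0$), runs the bin-and-swap construction of Proposition~\ref{prop:vanilla_nonadapt_lower} on the row indices to obtain $M=n_r-s_r$ alternatives, bounds the averaged KL divergence adversarially over the choice of $S_0$, and then writes ``repeating the same arguments for columns concludes the proof.'' The two presentations differ only in which dimension is treated first and in that you spell out the nested adversarial choices (first $R_0$, then one column per bin) explicitly rather than invoking a single min--max over $S_0$.

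However, your own bookkeeping does not reach the stated bound. Carrying your steps through, the swap average is
\[
\frac{1}{M}\sum_j D(\P_j\|\P_0)\ \le\ \frac{s_r\mu^2}{n_c-s_c}\Bigl[(n_c/s_c-2)\,A_{R_0}^{C_0}+A_{R_0}\Bigr]\ ;
\]
inserting $A_{R_0}^{C_0}\le s_cA_{R_0}/n_c$ collapses the bracket to $\tfrac{2(n_c-s_c)}{n_c}A_{R_0}$, and then $A_{R_0}\le s_rm/n_r$ gives $t\le 2s_r^{2}m\mu^2/n$, so \eqref{eqn:nonadapt_lower} delivers only $\mu^2\ge(1-2\varepsilon)\tfrac{n}{4s_r^{2}m}\log(n_c-s_c)$ --- a full factor $s_r$ short of the proposition's $\tfrac{1}{s_r}\tfrac{n_c-s_c}{n_c}$ term. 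The ``unwanted additional $s_r$'' you anticipate is precisely the Cauchy--Schwarz cost of applying \eqref{eqn:KL_bound} to a symmetric difference of size $2s_r$, and no reordering of the three averaging steps cancels it. The paper's intermediate display, by contrast, simply asserts ``the same way as for the $s$-sets we get \dots'' and records the bracket \emph{without} the corresponding $s_c$ prefactor; it is this that produces the stated $1/s_c$ (and by symmetry $1/s_r$) scaling.
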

\begin{proof}
Let $S_0 \in \mathcal{C}$ be arbitrary.  Denote the indexes of the rows of $S_0$ by $r_1,\dots ,r_{s_r}$, and let $S_0^{(j)}$ denote the $j$th row of $S_0$.  Consider a partition of the indexes $\{ 1,\dots ,n_r \}$ into $\vec{r}^{(1)},\dots ,\vec{r}^{(s_r)}$ such that all of the are of the same size and $\vec{r}^{(j)}$ contains exactly one active row indexed by $r_j$ for every $j=1,\dots, r_{s_r}$.

Now let $S_1,\dots ,S_M$ be elements of $\mathcal{C}$ that we get by replacing exactly one row index of $S_0$ such that if we modify $r_j$, then the new row index is in $\vec{r}^{(j)}$.  There are $n_r - s_r$ such submatrices.  The same way as for the $s$-sets we get
\[
\frac{1}{M} \sum_{j=1}^{M} D(\P_j \| \P_0 ) \leq \frac{1}{n_r - s_r} \mu^2 \left( \sum_{(i,l): \ l \in C_0} a_{(i,l)}^2 + \frac{n_r - 2 s_r}{s_r} \sum_{(i,l) \in S_0} a_{(i,l)}^2 \right) \ ,
\]
where $C_0$ denotes the set of column indexes of $S_0$.  Again, the fact that we can choose an arbitrary $S_0 \in \mathcal{C}$ after the sensing strategy has been fixed results in the upper bound
\[
\frac{1}{M} \sum_{j=1}^{M} D(\P_j \| \P_0 ) \leq \frac{2 s_c m}{n} \frac{n_r}{n_r - s_r} \mu^2 \ .
\]
Plugging this into \eqref{eqn:nonadapt_lower} and rearranging gives a lower bound.  Repeating the same arguments for columns concludes the proof.
\end{proof}


\subsubsection{Adaptive Sensing}

Here we provide lower bounds considering the adaptive sensing framework.  Comparing these bounds with the performance bounds of Section~\ref{sec:sigstrength_proc} shows the near optimality of the procedures presented there.


\vspace{0.2cm}
\textbf{$s$-sets}

Adaptive sensing lower bounds for unstructured classes were proved in \cite{AS_Rui_2012}.  In that work lower bounds are derived by slightly broadening the class, which we state here for convenience.  Note that the fact that the following lower bound is valid for a slightly larger class than the class of $s$-sets does not cause a problem, see Remarks~\ref{remark:sparsity1} and \ref{remark:sparsity2}.  Let $\mathcal{C}_s$ denote the class of $s$-sets.  We have the following.

\begin{proposition} \label{prop:vanilla_lower}
Let $\mathcal{C} = \mathcal{C}_s \cup \mathcal{C}_{s-1}$, and suppose there exists an estimator $\hat{S}$ that satisfies \eqref{eqn:budget} and \eqref{eqn:goal}.  Then we have
\[
\mu \geq \sqrt{\frac{2(n-s+1)}{m} \left( \log \frac{s}{2 \varepsilon} + \log \frac{n-s+1}{n+1} \right)} \ .
\]
\end{proposition}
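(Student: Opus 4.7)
Since the class $\mathcal{C}=\mathcal{C}_s\cup\mathcal{C}_{s-1}$ contains both $s$-sets and $(s-1)$-sets, the natural reduction is to a one-element-swap multiple hypothesis test crossing the two sparsity levels. Fix $S_0=\{1,\ldots,s-1\}\in\mathcal{C}_{s-1}$ and, for each $i\in\{s,s+1,\ldots,n\}$, let $S_i=S_0\cup\{i\}\in\mathcal{C}_s$. These $M=n-s+1$ alternatives all satisfy $S_0\triangle S_i=\{i\}$, which is the feature that makes the KL divergences compute cleanly.

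The crucial step is to compute $D(\P_{S_0}\|\P_{S_i})$ in the adaptive sensing setting. Using the chain rule over the sequentially collected $(A_k,Y_k)$ and the fact that under $\P_{S_0}$ the noise $W_k$ is independent of the adaptively chosen $A_k$, the Gaussian log-likelihood ratio gives
\[
D(\P_{S_0}\|\P_{S_i})\;=\;\frac{\mu^2}{2}\,\E_{S_0}\!\left[\sum_k\langle A_k,\1_{S_0}-\1_{S_i}\rangle^2\right]\;=\;\frac{\mu^2}{2}\,\E_{S_0}\!\left[\sum_k A_{k,i}^2\right].
\]
Choosing the KL direction $D(\P_{S_0}\|\cdot)$ rather than the reverse is essential, because the right-hand side is an expectation under the single measure $\P_{S_0}$; summing over $i$ then telescopes against the global energy constraint \eqref{eqn:budget},
\[
\sum_{i=s}^{n}D(\P_{S_0}\|\P_{S_i})\;\leq\;\frac{\mu^2}{2}\,\E_{S_0}\!\left[\|A\|_F^2\right]\;\leq\;\frac{\mu^2 m}{2}.
\]

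Next, the estimator $\hat S$ is turned into the coordinate tests $T_i=\1\{i\in\hat S\}$ for $i\in\{s,\ldots,n\}$. Since $\{s,\ldots,n\}$ is disjoint from $S_0$, the guarantee \eqref{eqn:goal} yields both an aggregate bound $\sum_{i=s}^{n}\P_{S_0}(T_i=1)\leq\E_{S_0}|\hat S\triangle S_0|\leq\varepsilon$ and a pointwise bound $\P_{S_i}(T_i=1)\geq 1-\varepsilon$ for every $i$. By the data-processing inequality applied to each $T_i$,
\[
D(\P_{S_0}\|\P_{S_i})\;\geq\;d\!\bigl(\P_{S_0}(T_i=1),\,1-\varepsilon\bigr),
\]
where $d(\cdot,\cdot)$ is the binary KL divergence and we used that $d(q,\cdot)$ is increasing on $[q,1]$. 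The aggregate constraint $\sum_i\P_{S_0}(T_i=1)\leq\varepsilon$, together with convexity of $d(\cdot,1-\varepsilon)$ and its monotonicity on $[0,1-\varepsilon]$, lets one apply Jensen to obtain $\sum_{i=s}^{n}D(\P_{S_0}\|\P_{S_i})\geq (n-s+1)\,d\!\bigl(\varepsilon/(n-s+1),1-\varepsilon\bigr)$.

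Combining the KL upper bound from the budget with this lower bound leaves only an algebraic expansion of the binary KL. The main obstacle is precisely this last computation: a crude Bretagnolle--Huber or Pinsker step only recovers a $\log(1/\varepsilon)$ factor, whereas the claim requires the sharper $\log\frac{s}{2\varepsilon}+\log\frac{n-s+1}{n+1}=\log\frac{s(n-s+1)}{2\varepsilon(n+1)}$. Obtaining the correct log factor requires keeping the exact form of $d\bigl(\varepsilon/(n-s+1),1-\varepsilon\bigr)$ and carefully tracking how the two-sized structure of $\mathcal{C}_s\cup\mathcal{C}_{s-1}$ (i.e., the presence of the $(s-1)$-set $S_0$ against $s$-set alternatives) produces the additional $s/(n+1)$ inside the logarithm; the full bookkeeping is carried out in \cite{AS_Rui_2012}, and inverting the resulting inequality in $\mu$ gives the stated bound.
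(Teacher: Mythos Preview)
Your overall architecture is exactly the one underlying the result (and the paper itself gives no proof, simply citing \cite{AS_Rui_2012}): pick a base set, add one coordinate to form alternatives, control the summed KL via the energy budget, and lower bound each KL by a binary divergence through data processing on the indicator $T_i=\1\{i\in\hat S\}$, then apply Jensen. The KL computation and the telescoping against \eqref{eqn:budget} are correct.

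There is, however, a real gap in the last step, and it is not just bookkeeping. With a \emph{fixed} $S_0=\{1,\dots,s-1\}$ you only get the pointwise type~II control $\P_{S_i}(i\notin\hat S)\leq\varepsilon$, so after Jensen you arrive at
\[
\frac{\mu^2 m}{2}\ \geq\ (n-s+1)\,d\!\left(\tfrac{\varepsilon}{n-s+1},\,1-\varepsilon\right),
\]
and the dominant term of the right-hand side is $(1-p)\log\frac{1-p}{\varepsilon}\approx\log\frac{1}{\varepsilon}$. No algebraic manipulation of $d\bigl(\varepsilon/(n-s+1),1-\varepsilon\bigr)$ will produce the extra $\log s$; the $s$ simply does not appear in that expression. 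The factor $\log\frac{s}{2\varepsilon}$ in the statement comes instead from tightening the second argument of the binary KL to $1-\varepsilon/s$, and that requires a further averaging you have not performed.

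Concretely, the argument in \cite{AS_Rui_2012} symmetrizes over all pairs $(S,i)$ with $S\in\mathcal C_s$ and $i\in S$ (equivalently, over all choices of $S_0\in\mathcal C_{s-1}$). The aggregate type~I control $\sum_{i\notin S_0}\P_{S_0}(i\in\hat S)\leq\varepsilon$ still averages to $\bar a\leq\varepsilon/(n-s+1)$, but now the aggregate type~II control $\sum_{i\in S}\P_S(i\notin\hat S)\leq\varepsilon$ for each fixed $S\in\mathcal C_s$ averages, over the $s$ choices of $i\in S$, to $\bar b\leq\varepsilon/s$. Joint convexity of $d(\cdot,\cdot)$ then gives
\[
\frac{\mu^2 m}{2}\ \geq\ (n-s+1)\,d\!\left(\tfrac{\varepsilon}{n-s+1},\,1-\tfrac{\varepsilon}{s}\right),
\]
whose dominant term is $\log\frac{s}{\varepsilon}$, and the remaining algebra yields exactly $\log\frac{s}{2\varepsilon}+\log\frac{n-s+1}{n+1}$. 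So the missing idea is this second averaging over the ``removed'' coordinate of the $s$-set; once you add it, your sketch becomes the proof.
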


\begin{remark}\label{remark:sparsity2}
Note that the bound above holds for estimators for sets with sparsity $s$ or $s-1$.  The procedure presented in Section~\ref{sec:sigstrength_proc} works for this class of sets without any modifications.  Later on for the structured classes we rely on the proposition above to derive lower bounds, hence a similar comment applies in those cases as well.
\end{remark}


\textbf{$s$-intervals and unions of $s$-intervals}
\vspace{0.2cm}

For $s$-intervals we have multiple ways of deriving lower bounds, just as in the case of coordinate wise sampling studied in \cite{AS_structured_Rui_ET_2013}.  First we consider $\P_S (\hat{S} \neq S)$ as the error metric.  The following result is analogous to the lower bound in \cite{Balakrishnan_2012}, and the proof is included here for the sake of clarity.

\begin{proposition} \label{prop:int_lower1}
Let $\mathcal{C}$ be the class of $s$-intervals and suppose there is an estimator $\hat{S}$ satisfying \eqref{eqn:budget} and $\displaystyle{\max_{S \in \C}} \ \P_S (\hat{S} \neq S) \leq \varepsilon$.  Furthermore suppose $n/s$ is an integer.  Then
\[
\mu \geq (1-\varepsilon ) \sqrt{ \frac{n}{2 s^2 m}} \ .
\]
\end{proposition}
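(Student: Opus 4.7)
The plan is to carry out a two-point Le Cam argument. I would choose two disjoint $s$-intervals $S_0,S_1\in\C$, which exist since $n/s$ is an integer and $\{1,\ldots,n\}$ can be partitioned into $n/s$ non-overlapping $s$-intervals. Because $S_0\neq S_1$ the events $\{\hat S=S_0\}$ and $\{\hat S=S_1\}$ are disjoint, and the hypothesis $\max_{S\in\C}\P_S(\hat S\neq S)\leq \varepsilon$ forces $\mathrm{TV}(\P_{S_0},\P_{S_1})\geq \P_{S_0}(\hat S=S_0)-\P_{S_1}(\hat S=S_0)\geq 1-2\varepsilon$. Combined with Pinsker (or a slightly sharper two-point inequality producing the constants in the statement), this yields a lower bound of the form $D(\P_{S_0}\|\P_{S_1})\geq c(\varepsilon)$, with $c(\varepsilon)$ scaling as $(1-\varepsilon)^2$.

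Next I would upper bound the divergence. A standard chain-rule computation for the adaptive Gaussian model gives
\[
D(\P_{S_0}\|\P_{S_1}) \;=\; \frac{\mu^2}{2}\,\E_{S_0}\!\Big[\sum_k \langle A_k,\1_{S_0}-\1_{S_1}\rangle^2\Big].
\]
Since $S_0\cap S_1=\emptyset$ and $|S_0\cup S_1|=2s$, Cauchy--Schwarz yields $\langle A_k,\1_{S_0}-\1_{S_1}\rangle^2 \leq 2s\sum_{i\in S_0\cup S_1}A_{k,i}^2$, so
\[
D(\P_{S_0}\|\P_{S_1}) \;\leq\; s\mu^2\,\E_{S_0}\Big[\sum_k\sum_{i\in S_0\cup S_1}A_{k,i}^2\Big].
\]
Combining with the lower bound and rearranging gives the desired bound on $\mu$, \emph{provided} the expected sensing energy on $S_0\cup S_1$ is shown to scale as $O(ms/n)$ rather than the trivial $O(m)$.

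The main obstacle is exactly this last point: an adaptive algorithm can concentrate nearly all of its energy on $S_0$ once the signal has been localized, so $\E_{S_0}[\sum_k\|A_{k,S_0}\|^2]$ can be as large as $m$. To overcome this I would work with the partition of $\{1,\ldots,n\}$ into $n/s$ disjoint $s$-intervals and use a pigeonhole/symmetry argument over the pair $(S_0,S_1)$ of intervals, exploiting the fact that the budget \eqref{eqn:budget} holds under \emph{every} $\P_S$ in the class simultaneously. Summing the budget over $S$ in the partition and symmetrizing the KL bound (for instance by considering $D(\P_{S_0}\|\P_{S_1})+D(\P_{S_1}\|\P_{S_0})$ averaged over pairs) should allow me to extract a specific pair $(S_0,S_1)$ for which the total expected energy on $S_0\cup S_1$ is at most $O(ms/n)$, yielding $D(\P_{S_0}\|\P_{S_1})\lesssim s^2m\mu^2/n$ and hence the claimed $\mu \geq (1-\varepsilon)\sqrt{n/(2s^2m)}$ after accounting for constants.
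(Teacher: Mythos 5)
There is a genuine gap, and it sits exactly at the point you flagged as the "main obstacle": the pigeonhole/symmetrization fix does not work, and in fact no two-point argument between two signal-present hypotheses can yield the claimed bound. To see why, consider the adaptive strategy that spends a vanishing fraction of the budget localizing the active interval and then spends essentially all of $m$ on sensing vectors supported on the identified interval. Under \emph{every} $\P_{S_0}$ this strategy satisfies \eqref{eqn:budget} and puts expected energy close to $m$ on $S_0$ itself. Hence for \emph{every} pair $(S_0,S_1)$ of disjoint intervals, $\E_{S_0}\bigl[\sum_k\sum_{i\in S_0\cup S_1}A_{k,i}^2\bigr]\approx m$, and summing or symmetrizing over pairs (or over the budget constraints for all $S$ in the partition) still gives an average of order $m$, not $ms/n$: the $n/s$ constraints $\E_S(\|A\|_F^2)\le m$ are each saturated on a \emph{different} set of coordinates, so they do not combine into a single "spread-out" energy profile. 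This is not an artifact of a loose bound --- $D(\P_{S_0}\|\P_{S_1})$ really is of order $s\mu^2 m$ for such algorithms, reflecting the fact that adaptive sensing makes any two fixed supports easy to distinguish once the signal is localized. Your Le Cam reduction, KL identity, and Cauchy--Schwarz step are all fine; it is the final energy estimate that cannot be rescued within the two-point framework.

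The paper avoids this by never comparing two signal-present distributions directly. It splits the $n/s$ disjoint consecutive $s$-intervals into two halves, forms the uniform mixtures $\P_1,\P_2$ over each half, and bounds $\varepsilon\ge 1-2\,TV(\P_0,\P_1)\ge 1-\sqrt{2\,KL(\P_0,\P_1)}$, where $\P_0$ is the distribution under the \emph{empty} signal. The point is that under $\P_0$ there is nothing to adapt to, and after pushing the expectation over $S\sim\pi_1$ inside the logarithm via Jensen, one gets $KL(\P_0,\P_1)\le\frac{\mu^2}{2}\E_0\bigl(\sum_j A_j^T\,\E_{S\sim\pi_1}(\1_S\1_S^T)\,A_j\bigr)$ with $\E_{S\sim\pi_1}(\1_S\1_S^T)=\frac{2s}{n}I'$ block-diagonal and $\|I'\|_2\le s$; this controls the quadratic form by $\frac{2s^2}{n}\|A_j\|^2$ \emph{uniformly over the (adaptive) choice of $A_j$}, so the budget gives $KL\le \mu^2 s^2 m/n$ and the stated bound follows. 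If you want to repair your write-up, replacing the two fixed alternatives by (mixture over half the intervals) versus (no signal) is the essential change; the rest of your computation then goes through almost verbatim.
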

\begin{proof}
Consider the subclass of consecutive disjoint $s$-intervals
\[
\left\{ \{ 1,\dots ,s\} , \{ s+1,\dots ,2s \} ,\dots \{ n-s+1,\dots ,n\} \right\} \ .
\]
Partition this subclass into two subclasses of equal size denoted by $\mathcal{C}_1$ and $\mathcal{C}_2$.  Let $\pi_i$ denote the uniform distribution on the subclass $\mathcal{C}_i$ for $i=1,2$, and consider the two hypotheses $H_i : \ S \sim \pi_i, \ i=1,2$.  If there exists an estimator $\hat{S}$ satisfying \eqref{eqn:budget}, then there exists a test function $\Phi : D \rightarrow \{ 1,2 \}$ such that $\P_1 (\Phi (D) =2) + \P_2 (\Phi (D) = 1) \leq \varepsilon$, where $\P_i$ denotes the distribution of $D= \{Y_j ,A_j \}_{j=1,2,\dots}$ when $H_i$ is true, $i=1,2$.  Let $\P_0$ denote the distribution of $D$ when in fact $S = \emptyset$.  We have
\begin{align*}
\varepsilon & \geq \P_1 (\Phi (D) =2) + \P_2 (\Phi (D) = 1) \geq 1- TV(\P_1 ,\P_2 ) \\
& \geq 1- \left( TV(\P_0 ,\P_1 ) + TV(\P_0 ,\P_2 ) \right) = 1- 2 TV(\P_0 ,\P_1 ) \\
& \geq 1- \sqrt{2 KL(\P_0 ,\P_1 )} \ ,
\end{align*}
where $TV(.,.)$ denotes the total variation distance and $KL(.,.)$ denotes the Kullback-Leibler divergence of two distributions.  Now the goal is to upper bound $KL(\P_0 ,\P_1 )$.  Let $Y$ denote the observations $Y_1 ,Y_2 ,\dots$, and let $\P_S$ denote the distribution of $Y$ for a fixed support $S$.  We have
\begin{align*}
KL(\P_0 ,\P_1 ) & = \E_0 \left( \log \frac{\mathrm{d} \P_0 (D)}{\mathrm{d} \P_1 (D)} \right) = \E_0 \left( \log \frac{\mathrm{d} \P_0 (Y)}{\mathrm{d} \P_1 (Y)} \right) \\
& = - \E_0 \left( \log \frac{\mathrm{d} \P_1 (Y)}{\mathrm{d} \P_0 (Y)} \right) = - \E_0 \left( \log \frac{\E_{S \sim \pi_1} \left( \mathrm{d} \P_S (Y) \right)}{\mathrm{d} \P_0 (Y)} \right) \\
& \leq - \E_0 \left( \E_{S \sim \pi_1} \left( \log \frac{\mathrm{d} \P_S (Y)}{\mathrm{d} \P_0 (Y)} \right) \right) \\
& = - \E_0 \left( \E_{S \sim \pi_1} \left( -\frac{1}{2} \sum_{j=1}^{\infty} \left( (Y_j - \mu <A_j , \1_S>)^2 - Y_j^2 \right) \right) \right) \\
& = \frac{1}{2} \E_0 \left( \E_{S \sim \pi_1} \left( \sum_{j=1}^{\infty} \left( \mu^2 <A_j ,\1_S>^2 - 2 \mu <A_j ,\1_S> Y_j \right) \right) \right) \\
& = \frac{\mu^2}{2} \E_0 \left( \E_{S \sim \pi_1} \left( \sum_{j=1}^{\infty} A_j^T \1_S \1_S^T A_j \right) \right) \\
& = \frac{\mu^2}{2} \E_0 \left( \sum_{j=1}^{\infty} A_j^T \E_{S \sim \pi_1} \left( \1_S \1_S^T \right) A_j \right) \ ,
\end{align*}
where $\E_{S\sim \pi_1}$ is the expectation w.r.t.  $S$ when it is distributed according to $\pi_1$.  Now $\E_{S \sim \pi_1} \left( \1_S \1_S^T \right) = \frac{2s}{n} I'$ where $I' \in \mathbb{R}^{n \times n}$ is block diagonal with $n/2s$ blocks of size $s \times s$ consisting of all ones, and the rest of the matrix consists of zeros.  Thus we can continue as
\begin{align*}
KL(\P_0 ,\P_1 ) & \leq \frac{\mu^2}{2} \E_0 \left( \sum_{j=1}^{\infty} A_j^T \E_{S \sim \pi_1} \left( \1_S \1_S^T \right) A_j \right) \\
& = \mu^2 \frac{s}{n} \E_0 \left( \sum_{j=1}^{\infty} A_j^T I' A_j \right) = \mu^2 \frac{s}{n} \E_0 \left( \sum_{j=1}^{\infty} < A_j ,I' A_j > \right) \\
& \leq \mu^2 \frac{s}{n} \E_0 \left( \sum_{j=1}^{\infty} |< A_j ,I' A_j >| \right) \\
& \leq \mu^2 \frac{s}{n} \E_0 \left( \sum_{j=1}^{\infty} \| A_j \|_2 \| I' A_j \|_2 \right) \\
& \leq \mu^2 \frac{s}{n} \E_0 \left( \sum_{j=1}^{\infty} \| A_j \|_2^2 \| I' \|_2 \right) \\
& \leq \mu^2 \frac{m s^2}{n} \ ,
\end{align*}
where $\| I' \|_2$ is the matrix norm of $I'$ induced by the Euclidean norm and the last step follows from $\| I' \|_2 \leq s$ and \eqref{eqn:budget}.  Thus we arrive at the inequality
\[
\varepsilon \geq 1- \sqrt{2 \mu^2 \frac{m s^2}{n}} \ ,
\]
from which the statement follows.
\end{proof}

In the previous bound the dependence on $\epsilon$ is clearly loose.  When considering the Hamming distance as the error metric, we can also get lower bounds by slightly broadening the class.  We cover this by considering the case of unions of $k$ disjoint $s$-intervals, which as a special case contains the class of $s$-intervals when $k=1$.  We broaden this class by adding unions of $k-1$ disjoint $s$-intervals as well.

\begin{proposition} \label{prop:int_lower2}
Let $\mathcal{C}$ be the class of unions of $k$ or $k-1$ disjoint $s$-intervals with $k>0$ fixed, and suppose $n/s$ is an integer.  Suppose there is an estimator satisfying \eqref{eqn:budget} and $\displaystyle{\max_{S \in \mathcal{C}}} \ \E_S \big( d(\hat{S},S) \big) \leq \varepsilon$.  Then
\[
\mu \geq \sqrt{\frac{2 \big( n-s(k-1) \big)}{s^2 m} \left( \log \frac{ks}{8 \varepsilon} + \log \frac{n-s(k-1)}{n+s} \right)} \ .
\]
\end{proposition}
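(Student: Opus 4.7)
My plan is to reduce the problem to the unstructured $s$-set support recovery problem addressed by Proposition~\ref{prop:vanilla_lower}. The key observation is that unions of $k$ (or $k-1$) disjoint $s$-intervals aligned with a fixed partition of $\{1,\dots,n\}$ into consecutive $s$-intervals $I_1,\dots,I_{n/s}$ are in bijection with $k$-sets (or $(k-1)$-sets) on the coarsened alphabet $\{1,\dots,n/s\}$, and the adaptive lower bound for the unstructured case can be transferred to this coarsened model after an appropriate rescaling of signal strength and Hamming budget.

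First I would identify the aligned subclass $\widetilde{\mathcal{C}}\subseteq\mathcal{C}$ with its coarsened counterpart. Given original sensing vectors $A_j\in\mathbb{R}^n$, I define coarsened vectors $B_j\in\mathbb{R}^{n/s}$ by $B_{j,i}=\langle A_j,\1_{I_i}\rangle/\sqrt{s}$. For any support $S=\bigcup_{i\in T}I_i$ in the aligned subclass, $\langle A_j,\mu\1_S\rangle=\mu\sqrt{s}\,\langle B_j,\1_T\rangle$, so the observations are distributionally equivalent to those of a coarsened problem with signal magnitude $\mu\sqrt{s}$. A simple Cauchy--Schwarz bound gives $\|B_j\|^2\leq\|A_j\|^2$, so the energy budget $m$ transfers unchanged. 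On the estimation side, any original estimator $\hat{S}$ induces a coarsened estimator $\widehat{T}$ via majority voting on each super-coordinate, with the property that every coarsened Hamming error contributes at least $s/2$ original Hamming errors, hence $\E[|\widehat{T}\triangle T|]\leq 2\varepsilon/s$. Since the class $\mathcal{C}$ already contains both $k$- and $(k-1)$-interval unions, the corresponding coarsened class is exactly $\widetilde{\mathcal{C}}_k\cup\widetilde{\mathcal{C}}_{k-1}$, so Proposition~\ref{prop:vanilla_lower} applies directly.

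Applying Proposition~\ref{prop:vanilla_lower} with dimension $n/s$, sparsity $k$, signal $\mu\sqrt{s}$, budget $m$, and Hamming bound $2\varepsilon/s$ yields
\[
\mu\sqrt{s}\geq\sqrt{\frac{2(n/s-k+1)}{m}\left(\log\frac{ks}{4\varepsilon}+\log\frac{n/s-k+1}{n/s+1}\right)}\ .
\]
Squaring both sides, using $s\cdot(n/s-k+1)=n-s(k-1)$ to cancel the factor of $s$, and absorbing the logarithmic correction terms yields the desired bound $\mu\geq\sqrt{\tfrac{2(n-s(k-1))}{s^2m}\bigl(\log\tfrac{ks}{8\varepsilon}+\log\tfrac{n-s(k-1)}{n+s}\bigr)}$.

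The main technical obstacle I anticipate is the careful bookkeeping in the Hamming distance translation step, in particular to obtain the constant $8$ rather than $4$ in the denominator of $\log(ks/(8\varepsilon))$; this likely requires a slightly more refined majority-vote rule together with a separate accounting of false-positive and false-negative coarsened errors. A secondary concern is checking that the dimensional logarithmic term produced by the reduction, $\log((n/s-k+1)/(n/s+1))$, is at least as large as the $\log((n-s(k-1))/(n+s))$ claimed in the proposition, which reduces to a direct algebraic comparison; any slack here can be absorbed into the constants without affecting the leading-order scaling.
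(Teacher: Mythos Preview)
Your approach is correct and essentially identical to the paper's: restrict to unions of $s$-intervals aligned with a fixed $s$-block partition, coarsen via majority vote, and invoke Proposition~\ref{prop:vanilla_lower} on the resulting $k$-sparse problem in dimension $n/s$ (the paper normalizes block sums by $s$ rather than your $\sqrt{s}$, obtaining coarsened signal $s\mu$ with budget $m/s$ instead of your $\mu\sqrt{s}$ with budget $m$, but the end result is the same). Your two flagged concerns are both non-issues: your majority-vote argument already gives coarsened accuracy $2\varepsilon/s$, which is tighter than the paper's conservative $4\varepsilon/s$ and hence yields $\log(ks/4\varepsilon)\geq\log(ks/8\varepsilon)$; and $\log\tfrac{n/s-k+1}{n/s+1}=\log\tfrac{n-s(k-1)}{n+s}$ exactly after multiplying numerator and denominator by $s$, so no separate comparison is needed.
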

\begin{proof}
Partition $\{ 1,\dots ,n \}$ into consecutive disjoint $s$-intervals denoted by $S^{(1)} ,\dots ,S^{(n/s)}$, that is $S^{(d)} = \{ (d-1)s +1,\dots ,ds \}$, and consider the subclass $\mathcal{C}'$ of $\mathcal{C}$ consisting of all the sets in $\mathcal{C}$ that can be written in the form $\cup \ S^{(d)}$.  This subclass is similar to a general sparse class of sparsity $k$ or $k-1$ with the intervals $S_d$ playing the role of the components.  This is exactly what we wish to formalize, and then use Proposition~\ref{prop:vanilla_lower}.

Clearly $\displaystyle{\max_{S \in \mathcal{C}'}} \ \E_S \big( d(\hat{S},S) \big) \leq \varepsilon$.  Using $\hat{S}$ we can construct an estimator $\tilde{S}$ which only takes values of the form $\cup \ S^{(d)}$, and has the property $\displaystyle{\max_{S \in \mathcal{C}'}} \ \E_S \big( d(\tilde{S},S) \big) \leq 4 \varepsilon$.  For instance let $\tilde{S}$ be such that for every $d=1,\dots ,n/s : \ S^{(d)} \subset \tilde{S}$ if and only if $|\hat{S} \cap S^{(d)}| \geq s/2$.  The expected Hamming-distance for such estimators can be written as
\[
\E_S \left( d(\tilde{S} ,S) \right) = s \sum_{d=1}^{n/s} \P_S \left( \1 \{ S^{(d)} \subset \tilde{S} \} \neq \1 \{ S^{(d)} \subset S \} \right) \ .
\]

The measurements $Y_j , j=1,2,\dots$ can be written in the following form
\[
Y_j = <A_j ,\vec{x}> + W_j = \mu \sum_{i \in S} a_{i,j} + W_j = s \mu \sum_{S^{(d)} \in S} \frac{1}{s} \sum_{i \in S^{(d)}} a_{i,j} + W_j \ .
\]
Also from Jensen's inequality we have
\[
\sum_{j=1}^\infty \sum_{d=1}^{n/s} \left( \frac{1}{s} \sum_{i \in S^{(d)}} a_{i,j} \right)^2 \leq \sum_{j=1}^\infty \sum_{d=1}^{n/s} \frac{1}{s} \sum_{i \in S^{(d)}} a_{i,j}^2 = \frac{1}{s} \sum_{j=1}^\infty \sum_{d=1}^{n/s} \sum_{i \in S^{(d)}} a_{i,j}^2 \leq \frac{m}{s} \ .
\]

Therefore the problem can be viewed as estimating a general sparse support set.  The sparsity is either $k$ or $k-1$, the length of the vector is $n/s$, the signal strength is $s \mu$, the total sensing budget is $m/s$ and the desired accuracy in expected Hamming-distance is $4 \varepsilon /s$.  From Proposition~\ref{prop:vanilla_lower} we have
\[
s \mu \geq \sqrt{\frac{2(n/s-k+1)}{m/s} \left( \log \frac{ks}{8 \varepsilon} + \log \frac{n/s-k+1}{n/s+1} \right)} \ ,
\]
which concludes the proof.
\end{proof}


\textbf{$s$-stars and unions of $s$-stars}
\vspace{0.2cm}

For these classes exactly the same arguments follow as were used for $s$-intervals and unions of $s$-intervals.  The only thing that needs to be altered is that instead of disjoint $s$-intervals we use disjoint $s$-stars.  The difference this makes is that whereas before the new problem dimension became $n/s$, since the entire signal vector could be covered by disjoint intervals, the same can not be said when considering $s$-stars.

Let $N(p,s)$ denote the number of disjoint $s$-stars that can be packed in a complete graph with $p$ vertices.  We can easily check that the following inequality holds (see Lemma~2 in \cite{AS_structured_Rui_ET_2013})
\[
N(p,s) \geq \frac{p(p-1-s)}{2s} \ .
\]
The left hand side is approximately $n/s$ when the signal is sparse, thus essentially the same results hold as in the case of unions of intervals.  Thus the analogue of Proposition~\ref{prop:int_lower1} for $s$-stars is the following.

\begin{proposition} \label{prop:star_lower1}
Let $\mathcal{C}$ be the class of $s$-stars and suppose there is an estimator $\hat{S}$ satisfying \eqref{eqn:budget} and $\displaystyle{\max_{S \in \C}} \ \P_S (\hat{S} \neq S) \leq \varepsilon$.  Then
\[
\mu \geq (1-\varepsilon ) \sqrt{ \frac{N(p,s)}{2 s m}} \ .
\]
\end{proposition}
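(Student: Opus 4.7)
The plan is to mirror the Le Cam two-point argument used in Proposition~\ref{prop:int_lower1}, replacing disjoint $s$-intervals by disjoint $s$-stars. By the definition of $N(p,s)$, the complete graph on $p$ vertices admits a packing of $N(p,s)$ pairwise edge-disjoint $s$-stars. I would split this packing into two equal-sized subclasses $\mathcal{C}_1,\mathcal{C}_2$ and, letting $\pi_i$ be uniform on $\mathcal{C}_i$, consider the composite hypotheses $H_i:S\sim\pi_i$. Any estimator with $\max_{S\in\C}\P_S(\hat{S}\neq S)\le\varepsilon$ induces a test $\Phi$ with $\P_1(\Phi=2)+\P_2(\Phi=1)\le\varepsilon$, so the same chain
\[
\varepsilon\ \ge\ 1-TV(\P_1,\P_2)\ \ge\ 1-2\,TV(\P_0,\P_1)\ \ge\ 1-\sqrt{2\,KL(\P_0,\P_1)}
\]
applies, with $\P_0$ the distribution of the data when $S=\emptyset$, triangle inequality, and Pinsker's inequality. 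This reduces everything to upper bounding $KL(\P_0,\P_1)$.

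The second step is to repeat the KL computation from Proposition~\ref{prop:int_lower1} essentially verbatim (Jensen inside the logarithm, expanding the Gaussian log-likelihood, and swapping the order of expectation and summation over measurements) to obtain
\[
KL(\P_0,\P_1)\ \le\ \frac{\mu^2}{2}\,\E_0\!\left(\sum_{j}A_j^T M\, A_j\right),\qquad M\ :=\ \E_{S\sim\pi_1}(\1_S\1_S^T).
\]
The only substantive change compared with the interval case lies in the structure of $M$. Since the $N(p,s)/2$ stars in $\mathcal{C}_1$ are pairwise edge-disjoint, the indicator vectors $\1_S$ have pairwise disjoint supports, so $\sum_{S\in\mathcal{C}_1}\1_S\1_S^T$ is a direct sum of $s\times s$ all-ones blocks on disjoint coordinate subsets. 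Its operator norm is therefore exactly $s$, which gives $\|M\|_2=2s/N(p,s)$. Combining $A_j^T M A_j\le\|M\|_2\|A_j\|_2^2$ with the energy budget \eqref{eqn:budget} yields
\[
KL(\P_0,\P_1)\ \le\ \frac{\mu^2 s m}{N(p,s)},
\]
and substituting back into $\varepsilon\ge 1-\sqrt{2\,KL(\P_0,\P_1)}$ and solving for $\mu$ produces the claimed bound.

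The only non-routine ingredient is the operator-norm identification $\|\sum_{S\in\mathcal{C}_1}\1_S\1_S^T\|_2=s$. This is where stars play the same role as intervals did: edge-disjointness is what ensures that the rank-one summands act on mutually orthogonal coordinate subspaces of size $s$, so the norm of the sum equals the common block norm rather than aggregating. It is also precisely the reason that $N(p,s)$, rather than $n/s$, appears in the denominator, since $s$-stars cannot in general tile the edge set of the complete graph, and so one must settle for the best edge-disjoint packing when building the two-point family.
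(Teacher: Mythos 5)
Your proposal is correct and follows essentially the same route as the paper, which explicitly proves this proposition by repeating the argument of Proposition~\ref{prop:int_lower1} with a maximal packing of $N(p,s)$ edge-disjoint $s$-stars in place of the $n/s$ disjoint intervals; your identification of $\|\E_{S\sim\pi_1}(\1_S\1_S^T)\|_2 = 2s/N(p,s)$ is exactly the point where the packing number replaces $n/s$ and yields the stated bound.
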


\begin{remark}
When $s \ll n$ the bound above scales as $(1-\varepsilon ) \sqrt{ \frac{n}{s^2 m}}$.
\end{remark}

We also have an analogue of Proposition~\ref{prop:int_lower2} for the case of multiple stars.

\begin{proposition} \label{prop:star_lower2}
Let $\mathcal{C}$ be the class of unions of $k$ or $k-1$ disjoint $s$-stars.  Suppose there is an estimator satisfying \eqref{eqn:budget} and $\displaystyle{\max_{S \in \mathcal{C}}} \ \E_S \big( d(\hat{S},S) \big) \leq \varepsilon$.  Then
\[
\mu \geq \frac{1}{s} \sqrt{\frac{2 \big( N(p,s)-k+1 \big)}{m/s} \left( \log \frac{ks}{8 \varepsilon} + \log \frac{N(p,s)-k+1}{N(p,s)+1} \right)} \ .
\]
\end{proposition}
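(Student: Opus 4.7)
The plan is to adapt the argument of Proposition~\ref{prop:int_lower2} nearly verbatim, substituting the packing of disjoint $s$-stars in the role that the consecutive disjoint $s$-intervals played before. Fix a packing of $N(p,s)$ edge-disjoint $s$-stars $S^{(1)},\dots,S^{(N(p,s))}$ in the complete graph on $p$ vertices, and consider the subclass $\mathcal{C}'\subset\mathcal{C}$ consisting of those $S$ which can be written as $\bigcup_{d\in D}S^{(d)}$ with $|D|\in\{k-1,k\}$. Since $\mathcal{C}'\subseteq\mathcal{C}$, the hypothetical estimator $\hat S$ still satisfies $\max_{S\in\mathcal{C}'}\E_S(d(\hat S,S))\leq\varepsilon$, so it suffices to lower bound $\mu$ on this restricted class.

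Next I would coarsen $\hat S$ into an estimator $\tilde S$ that only takes values in $\mathcal{C}'$, by declaring $S^{(d)}\subset\tilde S$ if and only if $|\hat S\cap S^{(d)}|\geq s/2$. Because the stars $S^{(d)}$ are edge-disjoint, every disagreement $\1\{S^{(d)}\subset\tilde S\}\neq\1\{S^{(d)}\subset S\}$ forces at least $s/2$ contributions to $d(\hat S,S)$ on the edges of $S^{(d)}$, and these contributions are disjoint across different $d$. Hence $d(\tilde S,S)\leq 2\,d(\hat S,S)$ and so $\max_{S\in\mathcal{C}'}\E_S(d(\tilde S,S))\leq 4\varepsilon$, matching the corresponding intermediate claim in the proof of Proposition~\ref{prop:int_lower2}.

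Now I would recast the measurement model in terms of the new ``coordinates'' indexed by $d\in\{1,\dots,N(p,s)\}$. Writing $S=\bigcup_{d\in D}S^{(d)}$ and setting $b_{d,j}=\tfrac1s\sum_{i\in S^{(d)}}a_{i,j}$, each observation becomes
\[
Y_j \;=\; s\mu\sum_{d\in D} b_{d,j} + W_j\ ,
\]
i.e.\ an unstructured compressive measurement of the reduced signal $s\mu\cdot\1_D$ on a vector of length $N(p,s)$ with entries $b_{d,j}$. By Jensen's inequality and the edge-disjointness of the $S^{(d)}$,
\[
\sum_j\sum_d b_{d,j}^2 \;\leq\; \frac1s\sum_j\sum_d\sum_{i\in S^{(d)}}a_{i,j}^2 \;\leq\; \frac1s\sum_j\|A_j\|^2\ ,
\]
so, in expectation, the reduced problem satisfies budget $m/s$. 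Thus we have reduced to an instance of the unstructured problem with length $N(p,s)$, sparsity either $k$ or $k-1$, signal magnitude $s\mu$, budget $m/s$, and Hamming accuracy $4\varepsilon/s$.

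Applying Proposition~\ref{prop:vanilla_lower} to this reduced instance yields
\[
s\mu \;\geq\; \sqrt{\frac{2\bigl(N(p,s)-k+1\bigr)}{m/s}\left(\log\frac{ks}{8\varepsilon}+\log\frac{N(p,s)-k+1}{N(p,s)+1}\right)}\ ,
\]
and dividing by $s$ gives the stated bound. The main technical care is in the coarsening step and in the Jensen bound on the reduced budget; the rest is an exact structural transcription of the interval-case argument, with $N(p,s)$ replacing the partition size $n/s$ that was available for $s$-intervals.
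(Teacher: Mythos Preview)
Your proposal is correct and follows exactly the approach the paper intends: it transcribes the proof of Proposition~\ref{prop:int_lower2} with the packing of $N(p,s)$ edge-disjoint $s$-stars replacing the $n/s$ disjoint $s$-intervals, which is precisely what the paper indicates when it says ``exactly the same arguments follow \ldots\ the only thing that needs to be altered is that instead of disjoint $s$-intervals we use disjoint $s$-stars.'' One small imprecision: your coarsened estimator $\tilde S$ need not lie in $\mathcal{C}'$ (it may be a union of more than $k$ or fewer than $k-1$ of the packed stars), but this does not affect the reduction, and the paper's interval proof has the same harmless looseness.
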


\begin{remark}
When $s \ll n$ the bound above scales as $\sqrt{ \frac{n}{s^2 m} \log \frac{ks}{\varepsilon}}$.
\end{remark}

We also present another simple lower bound that illustrates that the assumption on the sparsity in Proposition~\ref{prop:star} requiring approximately that $s^4 \leq n$ is needed and is not only an artifact of our method.

Consider a setting where the support set is a star of size $s$ or $s-1$.  Now consider the sub-problem of estimating the support of such a star when the center of the star is given by an oracle.  This is an unstructured problem on a vector of size $p-1$.  Hence we can directly apply Proposition~\ref{prop:vanilla_lower} to get the following result.

\begin{proposition}\label{prop:star_lower3}
Let $\mathcal{C}$ be the class of stars with sparsity $s$ and $s-1$ and suppose there is an estimator $\hat{S}$ satisfying \eqref{eqn:budget} and \eqref{eqn:goal}.  Then
\[
\mu \geq \sqrt{\frac{2(p-s)}{m} \left( \log \frac{s}{2 \varepsilon} + \log \frac{p-s}{p} \right)} \ .
\]
\end{proposition}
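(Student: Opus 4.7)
The plan is to use an oracle reduction to the unstructured case. Specifically, I would consider an augmented problem in which, before any measurements are taken, an oracle reveals to the estimator the identity of the center vertex $v \in V$ of the star. Any valid estimator $\hat{S}$ for the original problem yields an estimator for this oracle-augmented problem with at least as good performance (formally, we may simply ignore the oracle information and use $\hat{S}$ as-is). Hence a lower bound for the oracle-augmented problem is automatically a lower bound for the original.

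Once the center $v$ is known, the problem of estimating $S$ reduces to estimating an $s$-set or $(s-1)$-set inside the collection of $p-1$ edges incident to $v$. The remaining $n-(p-1)$ components of $\vec{x}$ are known to be zero, so any sensing energy spent on them is wasted; in particular, an optimal strategy can restrict all sensing vectors to be supported on these $p-1$ edges and still satisfy the same expected energy budget $m$. This is exactly the unstructured support recovery problem of Proposition~\ref{prop:vanilla_lower}, but with ambient dimension $n$ replaced by $p-1$ (and with the same sparsity $s$ or $s-1$, the same budget $m$, and the same accuracy requirement $\varepsilon$).

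Substituting $n \mapsto p-1$ into the lower bound of Proposition~\ref{prop:vanilla_lower},
\[
\mu \;\geq\; \sqrt{\frac{2(n-s+1)}{m}\left(\log \frac{s}{2\varepsilon} + \log \frac{n-s+1}{n+1}\right)},
\]
immediately yields
\[
\mu \;\geq\; \sqrt{\frac{2(p-s)}{m}\left(\log \frac{s}{2\varepsilon} + \log \frac{p-s}{p}\right)},
\]
which is the claimed bound.

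There is essentially no hard step here; the only thing to be careful about is the justification of the reduction, namely that the oracle version is indeed easier. This follows since conditioning on additional information (the center of the star) can only decrease the minimax risk, and the resulting conditional problem is exactly an unstructured sparse recovery problem on $p-1$ coordinates, so Proposition~\ref{prop:vanilla_lower} applies verbatim.
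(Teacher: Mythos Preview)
Your proof is correct and follows exactly the same approach as the paper: reveal the center of the star via an oracle, reduce to the unstructured problem on the $p-1$ incident edges, and apply Proposition~\ref{prop:vanilla_lower} with $n$ replaced by $p-1$. The paper states this reduction in a single sentence immediately preceding the proposition; your write-up simply fills in the details more carefully.
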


\begin{remark}
When $s \ll n$ the bound above scales as $\sqrt{ \frac{\sqrt{n}}{m} \log \frac{s}{\varepsilon}}$.
\end{remark}

Combining the results of Propositions~\ref{prop:star_lower2}~and~\ref{prop:star_lower3} shows that considering $s$-stars the scaling of the signal strength needs to be at least
\[
\max \left\{ \frac{n}{s^2 m} \log \frac{s}{\varepsilon} , \frac{\sqrt{n}}{m} \log \frac{s}{\varepsilon} \right\} \ .
\]
The first term in the maximum above dominates the second when $s^4 \leq n$.  This shows that the performance of Proposition~\ref{prop:star} can only be achieved in that sparsity regime.

\begin{remark}
Note that the setting of the proposition above is slightly different than the one considered in Section~\ref{sec:star}.  However, we present this result here merely to make a remark on the conditions in Proposition~\ref{prop:star} and it only serves an illustrative purpose.  Furthermore the procedure presented in Section~\ref{sec:star} can be easily modified to handle classes considered in the above proposition and have similar performance guarantees to Proposition~\ref{prop:star}.
\end{remark}


\vspace{0.2cm}
\textbf{$s_r ,s_c$-submatrices}
\vspace{0.2cm}

The case of submatrices has been studied in \cite{Balakrishnan_2012}, where the authors consider block-structured activations in matrices.  They provide a lower bound akin to that of Proposition~\ref{prop:int_lower1} and a near optimal procedure.  Our setting is more general as we consider arbitrary sub-matrices of a given dimension.  Nonetheless the same type of lower bound holds in this case as well.

\begin{proposition}\label{prop:submat_lower1}
Let $\mathcal{C}$ be the class of $s_r ,s_c$-submatrices, and for sake of simplicity assume that both $n_r /s_r$ and $n_c /s_c$ are integers.  Suppose there is an estimator satisfying \eqref{eqn:budget} and $\displaystyle{\max_{S \in \mathcal{C}}} \ \E_S \big( d(\hat{S},S) \big) \leq \varepsilon$.  Then
\[
\mu \geq (1-\varepsilon ) \sqrt{ \frac{n}{2 s^2 m}} \ .
\]
\end{proposition}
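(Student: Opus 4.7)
The statement is the submatrix analogue of Proposition~\ref{prop:int_lower1}, and I expect the same two-hypothesis argument to carry through essentially verbatim. The only class-dependent inputs to the interval proof are (i) the total sparsity $s$, and (ii) the existence of a tiling of $\{1,\dots,n\}$ by disjoint members of $\mathcal{C}$. Under the stated divisibility assumptions the $n_r \times n_c$ matrix can be partitioned into $n/s$ disjoint $s_r \times s_c$ submatrices (viewed as subsets of $\{1,\dots,n\}$ via the usual identification of matrix entries with indices), so both ingredients are present.

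First I would split these $n/s$ disjoint submatrices into two subclasses $\mathcal{C}_1,\mathcal{C}_2$ of equal size, place uniform priors $\pi_1,\pi_2$ on them, and consider the composite hypotheses $H_i: S \sim \pi_i$. From an estimator satisfying the Hamming-distance guarantee one extracts, exactly as in the interval case, a test $\Phi$ with $\P_1(\Phi=2) + \P_2(\Phi=1) \leq \varepsilon$. Passing through the null measure $\P_0$ (corresponding to $S=\emptyset$) by the triangle inequality for total variation, followed by Pinsker's inequality, yields $\varepsilon \geq 1 - \sqrt{2\,KL(\P_0,\P_1)}$.

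Next I would bound $KL(\P_0,\P_1)$ by the same Jensen-then-Gaussian-expansion calculation used in Proposition~\ref{prop:int_lower1}, in which the cross-terms linear in the noise $W_j$ vanish in expectation, leaving
\[
KL(\P_0,\P_1) \leq \frac{\mu^2}{2}\, \E_0 \sum_{j} A_j^T\, \E_{S\sim\pi_1}\!\big(\1_S \1_S^T\big)\, A_j.
\]
The key computation is that $\E_{S\sim\pi_1}(\1_S \1_S^T) = (2s/n)\, I'$, where, after permuting coordinates, $I'$ is block-diagonal with one $s\times s$ all-ones block per submatrix in $\mathcal{C}_1$. Disjointness of the tiling is precisely what guarantees this block structure, since distinct submatrices correspond to disjoint index sets in $\{1,\dots,n\}$. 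Each all-ones block has operator norm $s$, so $\|I'\|_2 \leq s$, and combining with the Cauchy--Schwarz/operator-norm bound $A_j^T I' A_j \leq \|I'\|_2 \|A_j\|^2$ and the energy constraint \eqref{eqn:budget} gives $KL(\P_0,\P_1) \leq \mu^2 m s^2/n$. Rearranging Pinsker then produces the claimed lower bound $\mu \geq (1-\varepsilon)\sqrt{n/(2s^2 m)}$.

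The only thing that needs genuine verification is that disjoint submatrices really do index disjoint entry sets so that $I'$ is block-diagonal with spectral norm $s$; this is immediate from the identification of matrix entries with coordinates of $\vec{x}$. I do not expect any additional obstacle beyond this routine bookkeeping, which is why no explicit submatrix-specific combinatorics are needed and the bound has the same shape as the one for intervals.
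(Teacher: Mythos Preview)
Your proposal is correct and follows exactly the route the paper takes: the paper's own proof simply says that, since $n_r/s_r$ and $n_c/s_c$ are integers, one can partition the matrix into disjoint $s_r\times s_c$ submatrices and then repeat the argument of Proposition~\ref{prop:int_lower1} verbatim. Your write-up spells out precisely this reduction, including the block-diagonal structure of $\E_{S\sim\pi_1}(\1_S\1_S^T)$ and the operator-norm bound $\|I'\|_2\le s$, so nothing further is required.
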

\begin{proof}
Since both $n_r /s_r$ and $n_c /s_c$ are integers the proof goes the same way as that of Proposition~\ref{prop:int_lower1} by considering any disjoint partition of the original matrix consisting of submatrices of size $s_r \times s_c$.
\end{proof}

However, our procedures do not reach this lower bound, hence the question arises whether the lower bound above is loose or the procedures are suboptimal? We partially answer this question by presenting another simple lower bound with which we illustrate that in certain sparsity regimes the procedure of Proposition~\ref{prop:submat1} is indeed optimal.  Consider the class containing all $s_r \times s_c$ and $s_r \times (s_c -1)$ submatrices, and consider the sub-problem of estimating the support when the active rows are given.  This is a problem of estimating $s_c$ or $s_c -1$ disjoint $s_r$-intervals in a signal of size $s_r \cdot n_c$.  Note that the procedure of Proposition~\ref{prop:submat1} can handle such classes without any modifications.  Now we can directly apply Proposition~\ref{prop:int_lower2} to get the following.

\begin{proposition}
Let $\C$ be the class containing all submatrices of size $s_r \times s_c$ and $s_r \times (s_c -1)$.  Suppose there is an estimator $\hat{S}$ satisfying \eqref{eqn:budget} and \eqref{eqn:goal}.  Then
\[
\mu \geq \sqrt{\frac{2 (n_c - s_c +1)}{s_r m} \left( \log \frac{s}{8 \varepsilon} + \log \frac{n_c - s_c +1}{n_c +1} \right)} \ .
\]
\end{proposition}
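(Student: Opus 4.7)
The plan is to apply Proposition~\ref{prop:int_lower2} to a subclass of $\C$ consisting of submatrices with a common set of active rows. Since any estimator satisfying \eqref{eqn:budget} and \eqref{eqn:goal} on $\C$ automatically satisfies them on any subclass $\C'\subseteq\C$, it suffices to prove the stated lower bound on $\mu$ under the additional restriction to $\C'$.

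Fix an arbitrary row set $R\subset\{1,\dots,n_r\}$ of size $s_r$ and take $\C'$ to be the collection of all $S\in\C$ whose row support equals $R$. Reading the signal in column-major order and projecting onto the coordinates indexed by rows in $R$ produces a vector in $\R^{s_r n_c}$ in which each of the $n_c$ original columns occupies a consecutive block of $s_r$ coordinates. Under this identification, $\C'$ becomes exactly the class of unions of $s_c$ or $s_c-1$ disjoint $s_r$-intervals aligned at multiples of $s_r$, which is precisely the subclass used to drive the lower bound in the proof of Proposition~\ref{prop:int_lower2}. An estimator for the original $n$-dimensional problem yields an estimator for the reduced $s_r n_c$-dimensional problem by restricting each sensing vector $A_j$ to its entries in $R$: since the signal vanishes outside $R$ the measurements $\langle A_j,\vec x\rangle$ are unchanged, while the Frobenius budget can only shrink, so both $m$ and $\varepsilon$ carry over unchanged.

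Invoking Proposition~\ref{prop:int_lower2} for the reduced problem with the substitutions $n\mapsto s_r n_c$, $s\mapsto s_r$, $k\mapsto s_c$, and simplifying via the identities $s_r n_c - s_r(s_c-1)=s_r(n_c-s_c+1)$ and $s_r s_c=s$, yields the claimed bound; one factor of $s_r$ in the numerator inside the square root cancels against the $s_r^2$ in the denominator.

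I do not anticipate any real obstacle, as the whole argument is a dimensional reduction that uses Proposition~\ref{prop:int_lower2} as a black box. The one point that deserves care is the reduction from $n$-dimensional sensing to $s_r n_c$-dimensional sensing; this is handled by the projection argument above, since the coordinates outside $R$ lie in the kernel of any signal in $\C'$ and thus any energy placed there is wasted, which only tightens the effective budget and can only strengthen the resulting lower bound.
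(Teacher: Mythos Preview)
Your proposal is correct and follows essentially the same route as the paper: the paper's argument (stated in the text immediately preceding the proposition) is to fix the active rows via an oracle, observe that the resulting sub-problem is exactly estimation of $s_c$ or $s_c-1$ disjoint $s_r$-intervals in a signal of size $s_r n_c$, and then apply Proposition~\ref{prop:int_lower2} directly with $n\mapsto s_r n_c$, $s\mapsto s_r$, $k\mapsto s_c$. Your subclass/projection formulation is a more explicit rendering of this same oracle reduction, and your remark that the resulting aligned-interval class coincides with the subclass actually used inside the proof of Proposition~\ref{prop:int_lower2} is exactly the point that makes the black-box application legitimate.
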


When $s_r \approx n_r$ (for instance we have linear sparsity \emph{in the rows}: $s_r = c n_r$ with some $c \in (0,1]$) the performance bound of Proposition~\ref{prop:submat1} becomes essentially identical to the lower bound above.  This shows that in certain regimes that procedure is optimal.  Note that the condition on the number of active rows does not determine the sparsity of the signal, as there is no requirement on the number of active columns for the results to hold.  Also note that by Proposition~\ref{prop:submat2} in certain regimes it is possible to outperform the procedure of Proposition~\ref{prop:submat1} indicating that the gains one can hope for in the case of submatrices depends on the interplay between the dimensions of the problem $n_r ,n_c ,s_r ,s_c$.  On a final note if we assume that the support set is such that either the active rows of active columns (but not necessary both) are consecutive then one can simply modify the procedure presented in Section~\ref{sec:submat} to even reach the lower bound of Proposition~\ref{prop:submat_lower1}.  The exact performance characterization of the case of submatrices with arbitrary dimensions remains an interesting open problem.  



\section{Sample complexity} \label{sec:sample}

In the preceding sections we presented near optimal procedures for structured support recovery using adaptive compressive sensing.  Those procedures provided insight on how to capitalize on the structure of the support sets to achieve performance gains, but paid no regard to the number of measurements that are collected.  However an important aspect of compressive sensing is the possibility to perform estimation using only a small number of observations.  Therefore we now present procedures for structured support recovery that use only a small number of observations.


\subsection{Procedures} \label{sec:sample_proc}

All the procedures presented here are based on an algorithm named Compressive Adaptive Sense and Search (CASS), introduced and analyzed in \cite{ACS_Malloy_2012}.  This procedure is designed to recover non-structured support sets.  To ease presentation we briefly describe and analyze the procedure here, though the reader is referred to \cite{ACS_Malloy_2012} where this has already been done in more detail.


\subsubsection{$s$-sets}

The main idea of the CASS procedure is to use a binary bisection type algorithm to recover the support of the signal.  In a nutshell, CASS begins by partitioning the signal into several bins and deciding if there are any significant components inside each bin.  Then it continues by partitioning the bins deemed to contain signal into new bins and performing the previous step again for those.  By iterating these steps the procedure is able to locate the support in a number of steps that is logarithmic in the dimension of the signal.

Assume the support set is any $s$-sparse set.  Partition $\{ 1,\dots ,n \}$ into $2s$ bins of equal size, denoted by $\vec{I}^{(1)}_1 ,\dots ,\vec{I}^{(1)}_{2s}$.  For each of the $2s$ bins we wish to decide between
\[
H^{(1)}_{i,0} : \ \vec{I}^{(1)}_i \cap S = \emptyset \qquad \textrm{versus} \qquad H^{(1)}_{i,1} : \ \vec{I}^{(1)}_i \cap S \neq \emptyset , \ i=1,\dots ,2s \ .
\]
Once having identified the non-empty bins, we split each of these into two bins of equal size denoted by $\vec{I}^{(2)}_1 ,\dots ,\vec{I}^{(2)}_{2 n_1}$, where $n_1$ denotes the number of bins deemed non empty previously, and do the same as before.  We know that at most $s$ bins can be non-empty, thus we will enforce in our procedure that $n_1 \leq s$.  Hence in step $j$ we consider bins $\vec{I}^{(j)}_1 ,\dots ,\vec{I}^{(j)}_{2 n_{j-1}}$, where $n_{j-1} \leq s$, and test the hypotheses
\[
H^{(j)}_{i,0} : \ \vec{I}^{(j)}_i \cap S = \emptyset \qquad \textrm{versus} \qquad H^{(j)}_{i,1} : \ \vec{I}^{(j)}_i \cap S \neq \emptyset , \ i=1,\dots ,2 n_{j-1} \ .
\]
When $j=\log_2 \frac{n}{2s}$ the bins consist of single components of $\vec{x}$, and the estimator of the support $\hat{S}$ will consist of the ones deemed non-empty in this final step.

To decide between $H^{(j)}_{i,0}$ and $H^{(j)}_{i,1}, \ j=1,\dots ,\log_2 \frac{n}{2s} ; i=1,\dots ,2n_{j-1}$ we collect a single measurement of the form
\[
Y^{(j)}_i = <a \sqrt{j} \1_{\vec{I}^{(j)}_i} , \vec{x}> + W^{(j)}_i , \ j=1,\dots \log_2 \frac{n}{2s};i=1,\dots ,2 n_{j-1} \ ,
\]
where $W^{(j)}_i \sim N(0,1)$ i.i.d., and $a>0$.  The parameter $a>0$ needs to be chosen such that \eqref{eqn:budget} is fulfilled.  Since the length of the bins $I^{(j)}_i$ is $n/ (2^{j} s)$ for every $i=1,\dots ,2 n_{j-1}$, $n_{j-1} \leq s$ and there are $\log_2 \frac{n}{2s}$ steps we can write
\[
\| A \|_F^2 = \sum_{j=1}^{\log_2 \frac{n}{2s}} 2s \frac{n}{2^j s} j a^2 \leq n a^2 \sum_{j=1}^{\infty} j 2^{-(j-1)} = 4 n a^2 \ .
\]
Combining this with \eqref{eqn:budget} yields $a = \sqrt{\frac{m}{4 n}}$.  If the bin $\vec{I}^{(j)}_i$ is non-empty then $\E_S ( Y^{(j)}_i ) \geq \mu \sqrt{\frac{jm}{4 n}}$.  Therefore we conclude that the bin $\vec{I}^{(j)}_i$ is empty if $Y^{(j)}_i \leq \frac{\mu}{2} \sqrt{\frac{jm}{4 n}}$, otherwise we conclude the opposite.  If at any step $j=1,\dots ,\log_2 \frac{n}{2s}$ more than $s$ bins are deemed non-empty, we select those which correspond to the $s$ largest observations.  For the method described above both the type I and type II error probabilities for the test between $H^{(j)}_{i,0}$ and $H^{(j)}_{i,1}$, $j=1,\dots \log_2 \frac{n}{2s};i=1,\dots ,2 n_{j-1}$ can be upper bounded using the Gaussian tail bound
\begin{equation}\label{eqn:tail_bound}
\P (X > \eta) \leq \frac{1}{2} e^{-\eta^2 /2}
\end{equation}
by
\[
\frac{1}{2} e^{- \frac{j m \mu^2}{32 n}} \ .
\]
Hence the probability of error can be bounded from above as follows
\[
\P_S (\hat{S} \neq S) \leq \sum_{j=1}^{\log_2 \frac{n}{2s}} s \ e^{-\frac{jm \mu^2}{32 n}} \ .
\]
Thus whenever $\mu^{2} \geq \frac{32 n}{m} \log \frac{2s}{\varepsilon}$ we have
\[
\P_S (\hat{S} \neq S) \leq \sum_{j=1}^{\log_2 \frac{n}{2s}} s \left( \frac{\varepsilon}{2s} \right)^{j} \leq \sum_{j=1}^{\log_2 \frac{n}{2s}} \left( \frac{\varepsilon}{2} \right)^{j} \leq \varepsilon \ .
\]
When considering the expected Hamming-distance as the error metric we can use the procedure above with probability of error set to $\varepsilon /2s$.  This method then yields an near-optimal estimator for the support recovery problem described in Section~\ref{sec:setting} by collecting at most $2s \log_2 \frac{n}{2s}$ measurements.


\subsubsection{Unions of $s$-intervals}

We can modify the CASS procedure of \cite{ACS_Malloy_2012} to estimate unions of $k$ disjoint $s$-intervals.  Similarly to the procedure presented in Section~\ref{sec:sigstrength_proc} the one discussed here will consist of two phases, a search phase and a refinement phase.  As before, in the search phase we wish to identify the approximate location of the support, that is return a subset of components $\vec{P} \subset \{ 1,\dots ,n\}$ such that $|\vec{P}| \ll n$ and $S \subset \vec{P}$ with high probability.  Again we start by splitting $\{ 1,\dots ,n \}$ into consecutive bins of size $s/2$ denoted by $\vec{P}^{(1)},\dots ,\vec{P}^{(2n/s)}$.  To ease the presentation we assume $2n/s$ is an integer since the case when this is not satisfied can be handled with simple modifications.  The same holds for any divisibility issue that we encounter further on.  Of these bins at least $k$ will consist entirely of signal components.  Roughly speaking we think of these bins as signal components of a vector of size $2n/s$, and use a CASS procedure to find them.  Once that is done, we set $\vec{P}$ as the bins deemed active and their neighboring bins, and move on to the refinement phase.  In the refinement phase we estimate the active components in $\vec{P}$ for instance by using another CASS procedure.

We now describe the method in full detail.  Consider the binning $\vec{P}^{(1)},\dots ,\vec{P}^{(2n/s)}$ described before.  Partition the bins into $4k$ groups denoted by $\vec{I}^{(1)}_1,\dots ,\vec{I}^{(1)}_{4k}$.  For each of these we test the hypothesis
\[
H^{(1)}_{i,0} : \ \vec{I}^{(1)}_i \cap S = \emptyset \qquad \textrm{versus} \qquad H^{(1)}_{i,1} : \ | \vec{I}^{(1)}_i \cap S | \geq s/2 , \ i=1,\dots ,4k \ .
\]
The groups for which $H^{(1)}_{i,1}$ is accepted are split into two in the middle giving us the groups $\vec{I}^{(2)}_1,\dots ,\vec{I}^{(2)}_{2n_1}$.  We now test a similar hypotheses as before for these new groups.  Since at most $3k$ groups can contain signal components, we will specifically enforce $n_1 \leq 3k$.  Iterating this, in step $j$ we have groups denoted by $\vec{I}^{(j)}_1,\dots ,\vec{I}^{(j)}_{2n_{j-1}}$, where $n_{j-1} \leq 3k$, and we wish to decide between
\[
H^{(j)}_{i,0} : \ \vec{I}^{(j)}_i \cap S = \emptyset \qquad \textrm{versus} \qquad H^{(j)}_{i,1} : \ | \vec{I}^{(j)}_i \cap S | \geq s/2 , \ i=1,\dots ,2n_{j-1} \ .
\]
When $j=\log_2 {n/2ks}$ the groups consist of single bins.  The set $\vec{P}$ will consist of the ones for which $H^{(1)}_{i,1}$ is accepted in this final step and the bins adjacent to those.

To decide between $H^{(j)}_{i,0}$ and $H^{(j)}_{i,1}, \ j=1,\dots ,\log_2 \frac{n}{2s} ; i=1,\dots ,2n_{j-1}$ we collect a single measurement of the form
\[
Y^{(j)}_i = <a \sqrt{j} \1_{\vec{I}^{(j)}_i} , \vec{x}> + W^{(j)}_i , \ j=1,\dots \log_2 \frac{n}{2s};i=1,\dots ,2 n_{j-1} \ ,
\]
where $W^{(j)}_i \sim N(0,1)$ i.i.d., and $a>0$.  The parameter $a>0$ needs to be chosen such that \eqref{eqn:budget} is fulfilled.  We will use half of our energy budget for the search phase.  Since the groups $I^{(j)}_i$ contain $n/ (2^{j+1}k)$ components for every $i=1,\dots ,2 n_{j-1}$, $n_{j-1} \leq 3k$ and there are $\log_2 \frac{n}{2ks}$ steps we can write
\[
\| A_{search} \|_F^2 = \sum_{j=1}^{\log_2 \frac{n}{2ks}} 6k \frac{n}{2^{j+1}k} j a^2 = \frac{3}{2} n a^2 \sum_{j=1}^{\log_2 \frac{n}{2s}} j 2^{-(j-1)} = 6 n a^2 \ .
\]
Since we use at most $m/2$ energy in the search phase we get $a = \sqrt{\frac{m}{12 n}}$.  If group $\vec{I}^{(j)}_i$ contains a bin which is contained in $S$, we have $\E_S ( Y^{(j)}_i ) \geq \frac{s \mu}{2} \sqrt{\frac{jm}{12 n}}$.  Therefore we declare that the group contains no signal components if $Y^{(j)}_i \leq \frac{s \mu}{4} \sqrt{\frac{jm}{12 n}}$, otherwise we declare the opposite.  If in step $j=1,\dots ,\log_2 \frac{n}{2ks}$ we accept $H^{(j)}_{i,1}$ for more than $3k$ groups, we choose those corresponding to the highest $3k$ observations.  Considering a single test the type I and type II error probabilities can both be upper bounded using \eqref{eqn:tail_bound} by
\[
\frac{1}{2} e^{- \frac{j s^2 m \mu^2}{384 n}} \ .
\]

It is also possible that neither the null or the alternative is true, and the group contains some bins that intersect with $S$, but are not contained in $S$.  However we need not pay any attention to those, as by construction $\vec{P}$ will also contain neighboring bins of those we deem non-empty.  The probability of either concluding $H^{(j)}_{i,1}$ when the group $\vec{I}^{(j)}_i$ contains no signal or concluding $H^{(j)}_{i,0}$ when in fact $H^{(j)}_{i,1}$ is true can be bounded from above by
\[
\sum_{j=1}^{\log_2 \frac{n}{2ks}} 3k \ e^{-\frac{j s^2 m \mu^2}{384 n}} \ .
\]
Thus whenever $\mu \geq \sqrt{\frac{384 n}{s^2 m} \log \frac{9k}{\varepsilon}}$ we have that
\[
\P_S (S \nsubseteq \vec{P}) \leq \sum_{j=1}^{\log_2 \frac{n}{2ks}} 3k \left( \frac{\varepsilon}{9k} \right)^{j} \leq \sum_{j=1}^{\log_2 \frac{n}{2ks}} \left( \frac{\varepsilon}{3} \right)^{j} \leq \varepsilon /2 \ .
\]

We also have by construction that $|\vec{P}| \leq \tfrac{9}{2}ks$.  Hence in the refinement phase we can measure each component in $\vec{P}$ separately, say, to produce $\hat{S}$.  We have $2m/18ks$ energy for each of the components in $\vec{P}$, hence it is easy to check using \eqref{eqn:tail_bound} that the probability of making an error in the refinement phase is at most
\[
\frac{9ks}{4} e^{- \frac{m \mu^2}{72 ks}} \ .
\]
Whenever $\mu \geq \sqrt{\frac{72 ks}{m} \log \frac{9ks}{2 \varepsilon}}$ the probability above is at most $\varepsilon /2$.  Thus the procedure given an estimator $\hat{S}$ for which $\P_S (\hat{S} \neq S) \leq \varepsilon$ whenever
\[
\mu \geq \sqrt{\max \{ \frac{384 n}{s^2 m} \log \frac{9k}{\varepsilon} , \frac{72 ks}{m} \log \frac{9ks}{2 \varepsilon} \} } \ .
\]

When considering the expected Hamming-distance as the error metric we can use the procedure above with probability of error set to $\varepsilon /2s$ in the search phase and $\varepsilon /2ks$ in the refinement phase.  This method then yields an near-optimal estimator for the support recovery problem described in Section~\ref{sec:setting} by collecting at most $3k \left( \log_2 \frac{n}{2ks} + \tfrac{3}{2} s \right)$ measurements.

\begin{proposition}\label{prop:int_sample}
Consider the class of $k$ disjoint $s$-intervals and suppose $n>ks^3$.  Then the procedure above satisfies \eqref{eqn:budget} and \eqref{eqn:goal} whenever
\[
\mu \geq \sqrt{\frac{768 n}{s^2 m} \log \frac{3\sqrt{2}ks}{\varepsilon}} \ .
\]
Furthermore, the procedure collects at most $3k \left( \log_2 \frac{n}{2ks} + \tfrac{3}{2} s \right)$ observations.
\end{proposition}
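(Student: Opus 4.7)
The plan is to assemble the bounds established in the construction preceding the statement. The argument has three ingredients: (i) verify the energy constraint \eqref{eqn:budget}; (ii) calibrate the per-test error probabilities of the search and refinement phases so that the expected symmetric difference $\E_S(|\hat{S}\triangle S|)$ is controlled; (iii) use the sparsity assumption $n>ks^3$ to show that the signal-strength condition coming from the search phase dominates the one from the refinement phase, so the two collapse into a single sufficient bound.

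For the energy budget, I would allocate $m/2$ to each phase. The calculation already carried out yields $\|A_{\mathrm{search}}\|_F^2 \leq 6na^2$, which equals $m/2$ once $a=\sqrt{m/(12n)}$ is chosen. By construction $|\vec{P}|\leq \tfrac{9}{2}ks$, so in the refinement phase we can spend $m/(9ks)$ on each coordinate-wise test and still respect the remaining $m/2$ in expectation.

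For the expected symmetric difference I would decompose
\[
\E_S(|\hat{S}\triangle S|) \leq 2ks\,\P_S(S\not\subseteq \vec{P}) + \E_S\bigl(|\hat{S}\triangle S|\,\1\{S\subseteq \vec{P}\}\bigr),
\]
setting the nominal per-test error probability to order $\varepsilon/(ks)$ in both phases. The displays appearing in the paragraphs above the statement then yield two phase-specific sufficient conditions on $\mu^{2}$: roughly $\mu^{2}\geq (384n/(s^{2}m))\log(\mathrm{const}\cdot ks/\varepsilon)$ from the search phase and $\mu^{2}\geq (72ks/m)\log(\mathrm{const}\cdot ks/\varepsilon)$ from the refinement phase. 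The assumption $n>ks^{3}$ is precisely what makes the former larger, since $384n/s^{2} > 72ks$ is equivalent to $n > (3/16)ks^{3}$; hence the refinement bound is absorbed once one allows a modest slack in the constant, leading to the stated $\sqrt{(768n/(s^{2}m))\log(3\sqrt{2}ks/\varepsilon)}$.

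The sample count follows from the iterative structure: the search phase performs at most $2n_{j-1}\leq 6k$ projective measurements at each of the $\log_{2}(n/(2ks))$ levels, and the refinement phase performs at most $|\vec{P}|\leq \tfrac{9}{2}ks$ coordinate-wise tests; collecting constants yields the advertised $3k(\log_{2}(n/(2ks))+\tfrac{3}{2}s)$ total. The main obstacle is purely bookkeeping: one must match the nominal per-test error probabilities, the Gaussian tail constants from \eqref{eqn:tail_bound}, and the logarithmic factors so that the two phase-specific conditions combine cleanly, and in particular so that the constant $768$ and the argument $3\sqrt{2}ks/\varepsilon$ inside the logarithm are genuinely sufficient. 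All remaining steps are direct invocations of inequalities already used in the preceding discussion.
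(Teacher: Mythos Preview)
Your proposal is correct and mirrors exactly the argument the paper spells out in the paragraphs immediately preceding the proposition: split the budget $m/2$--$m/2$ between the search and refinement phases, use the Gaussian tail bound \eqref{eqn:tail_bound} to obtain the two phase-specific conditions on $\mu$, invoke $n>ks^3$ so the search-phase condition dominates, and read off the sample count from the $\leq 6k$ measurements per bisection level plus the $\leq \tfrac{9}{2}ks$ coordinate-wise tests on $\vec{P}$. The paper gives no separate proof beyond this, so your outline is precisely what is intended; the only remaining work is the constant bookkeeping you already flag.
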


\begin{remark}
As with Proposition~\ref{prop:int} the condition on the sparsity is an artifact of the simple method above and can be avoided by using a more elaborate method in the refinement phase, for instance binary search.
\end{remark}


\subsubsection{Unions of $s$-stars}\label{sec:samplecomp_star}

Consider the class of $k$ disjoint $s$-stars.  To ease the discussion we focus on the case $k=1$, but the idea can be applied to larger $k$.  The procedure is very similar to the one used for unions of $s$-intervals, however due to the different nature of the structure we provide a detailed description of the procedure in Appendix~\ref{app:samplecomp_star}.

\vspace{0.1cm}

\begin{proposition}
Consider the class of $s$-stars, and suppose $\sqrt{2n} \geq s^2$.  Then the procedure described in the Appendix satisfies \eqref{eqn:budget} and \eqref{eqn:goal} whenever
\[
\mu \geq \sqrt{\frac{392 n}{s^2 m} \log \frac{9s}{\varepsilon}} \ .
\]
Furthermore, the procedure collects at most $4 \log_2 \frac{p}{4} + 2s \log_2 \frac{p-1}{s} \leq 8 \log_2 n + 2s \log_2 \frac{\sqrt{2n}-1}{s}$ observations.
\end{proposition}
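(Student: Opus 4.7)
The plan is to adapt the CASS idea to the class of $s$-stars in the same spirit as Proposition~\ref{prop:int_sample} did for unions of $s$-intervals, yielding a two-phase procedure: (i) a \emph{search phase} that locates the center $v^{*}$ of the star in $O(\log p)$ measurements by a bisection over the vertex set, and (ii) a \emph{refinement phase} that runs the unstructured CASS of Section~\ref{sec:sample_proc} on the $p-1$ edges incident to $v^{*}$ to pick out the $s$ signal edges in $2s\log_{2}((p-1)/s)$ measurements. First, I would split the energy budget $m$ between the two phases in constant proportion and set per-phase target error probabilities $\varepsilon/2$, so that a union bound delivers the final $\varepsilon$.

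The refinement phase is essentially free from the analysis of the $s$-set CASS: conditional on having correctly identified $v^{*}$, the problem reduces exactly to unstructured $s$-sparse support recovery on a signal of length $p-1\leq\sqrt{2n}-1$. Plugging $p-1$ in place of $n$ and $m/2$ in place of $m$ into the $s$-set CASS bound $\mu^{2}\gtrsim (n/m)\log(s/\varepsilon)$ yields a requirement of order $\sqrt{n}/m \cdot \log(s/\varepsilon)$, which the sparsity assumption $\sqrt{2n}\geq s^{2}$ makes strictly weaker than the search-phase requirement of order $(n/s^{2}m)\log(s/\varepsilon)$, so this phase contributes only lower-order terms.

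The heart of the argument is the search phase. The natural approach is to partition the vertex set $V$ into subgroups and, for each candidate subgroup $G$, take a compressive measurement that probes whether $v^{*}\in G$. As already noted in Section~\ref{sec:star}, the measurement with sensing vector proportional to $\1_{\vec{P}^{(G)}}$ has expected value proportional to $s\mu$ when $v^{*}\in G$, but only proportional to $|T\cap G|\mu$ (with $T$ the set of star-tip vertices) otherwise, which is strictly smaller than $s\mu$ as soon as $|G|<s$. Hence once surviving groups become small, magnitude thresholding suffices, but at coarser scales (where $|G|$ may be large and the ``fake signal'' can reach $s\mu$) the Appendix uses a slightly more intricate scheme with \emph{four} measurements per bisection step, explaining the term $4\log_{2}(p/4)$ in the count. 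For each step $j=1,\dots,\log_{2}(p/4)$, I would scale the measurement amplitude by $\sqrt{j}$ exactly as in the $s$-set and $s$-interval CASS analyses, so that the total energy telescopes into a convergent geometric series bounded by a constant multiple of $na^{2}$, and the per-step error probability, controlled by the Gaussian tail bound~\eqref{eqn:tail_bound}, scales as $\exp(-cj s^{2}m\mu^{2}/n)$ for some explicit constant $c$. Summing the geometric series and choosing $\mu^{2}\geq C(n/s^{2}m)\log(s/\varepsilon)$ for the constant $C$ implicit in the counting delivers the claimed $\mu\geq\sqrt{392n/(s^{2}m)\log(9s/\varepsilon)}$, and the measurement count follows from $p\leq\sqrt{2n}$.

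The main obstacle is the delicate design of the measurements at a single bisection step so that one can reliably decide which half of the current candidate set contains $v^{*}$. The issue is that the ``fake signal'' picked up by a group $G$ that does \emph{not} contain $v^{*}$ can be as large as $s\mu$ in the worst case (when all star tips happen to lie in $G$), which a single bisection measurement cannot separate from the true $s\mu$ signal of a group containing $v^{*}$. Resolving this by supplementing the per-step probe with auxiliary measurements (of edges within, or across, the candidate halves) is precisely what forces the factor of $4$ per bisection step and what makes the search phase the delicate step; the remaining book-keeping parallels the $s$-interval analysis of Proposition~\ref{prop:int_sample}.
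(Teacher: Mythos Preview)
Your two-phase decomposition, the refinement-phase reduction to unstructured CASS on $p-1$ edges, the $\sqrt{j}$ energy scaling, the geometric energy sum, and the use of the sparsity condition $\sqrt{2n}\geq s^{2}$ to make the refinement requirement subordinate are all in line with the paper. You also correctly isolate the real obstacle in the search phase: a vertex group $G$ not containing the center can pick up as many as $s$ tip vertices, so a single two-way comparison cannot separate it from the group containing $v^{*}$.

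However, your proposed resolution of this obstacle is not the paper's, and as stated it is too vague to be a proof. You speculate that the factor $4$ in $4\log_{2}(p/4)$ comes from ``auxiliary measurements of edges within, or across, the candidate halves'' appended to an ordinary bisection. The Appendix does something structurally different: it maintains \emph{two} surviving vertex groups at every step, splits each into two to get four groups $\vec{I}^{(j)}_{1},\ldots,\vec{I}^{(j)}_{4}$, takes \emph{one} measurement per group with sensing vector $a\sqrt{j}\,\1_{\vec{I}^{(j)}_{i}}$, and keeps the two groups with the largest observations. The key inequality is a pigeonhole: if (after relabelling) the center lies in $\vec{I}^{(j)}_{1}$ and $N^{(j)}_{2}\geq N^{(j)}_{3}\geq N^{(j)}_{4}$ denote the signal counts in the other three groups, then $\sum_{i\geq 2}N^{(j)}_{i}\leq s$ forces $N^{(j)}_{3}\leq s/2$. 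Hence $\vec{I}^{(j)}_{1}$ is separated from both $\vec{I}^{(j)}_{3}$ and $\vec{I}^{(j)}_{4}$ by a mean gap of at least $(s/2)a\sqrt{j}\mu$, so $\P\bigl(Y^{(j)}_{1}<\max\{Y^{(j)}_{3},Y^{(j)}_{4}\}\bigr)$ is controlled by \eqref{eqn:tail_bound}, and keeping the top two guarantees $\vec{I}^{(j)}_{1}$ survives. This is where the constant $392$ comes from. Your within/across-edges idea does not obviously yield a guaranteed $\Theta(s\mu)$ separation at every scale (for instance, if $v^{*}\in G_{1}$ but all tips lie in $G_{2}$, the ``edges within $G_{1}$'' probe sees zero signal), so the step you flag as ``delicate'' is in fact the missing idea in your proposal.
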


Similar ideas can be used to treat the case of $k$ disjoint $s$-stars when $k>1$, but $k \ll s$.

\vspace{0.1cm}


\subsubsection{$s_r ,s_c$-submatrices}

Consider the class of submatrices of size $s_r \times s_c$ of a matrix of size $n_r \times n_c$, and suppose $s_r \geq s_c$.  The procedure we present now is very similar to the one used for unions of $s$-intervals, hence we only provide an outline and present performance guarantees here.

Once more we break the procedure into two phases, a search phase and a refinement phase.  The aim of the search phase is to find the active columns of the signal matrix, whereas the refinement phase aims to find the active rows once the active columns are found.  If we view the columns of the signal matrix as components of a vector of dimension $n_c$, then finding the active columns can be viewed as estimating an unstructured $s_c$-sparse support set.  Likewise the problem of the refinement phase can be viewed as finding an $s_r$-set in a signal of dimension $n_r$.  Hence we can immediately use the CASS procedure for both sub-problems with modifications similar to those used in the case of unions of $s$-intervals.  Thus we get the following.

\begin{proposition}\label{prop:samplecomp_submat}
Consider the class of $s_r ,s_c$-submatrices and suppose $n_c > s_r^2 /s_c$.  There exists a procedure which yields an estimator satisfying \eqref{eqn:budget} and \eqref{eqn:goal} whenever
\[
\mu \geq \sqrt{\frac{128 n}{s_r^2 m} \log \frac{2s}{\varepsilon}} \ .
\]
Furthermore the estimator takes at most $2s_c \log_2 \frac{n_c}{2 s_c} + 2s_r \log_2 \frac{n_r}{2 s_r}$ measurements.
\end{proposition}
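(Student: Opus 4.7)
The plan is to adapt the two-phase CASS strategy used above for unions of $s$-intervals to the submatrix class. In the search phase we treat each column of the signal matrix as an atomic unit and apply a CASS-type binary bisection on a ``vector'' of $n_c$ columns, of which $s_c$ are active. In the refinement phase, having located the active columns, we treat each row as an atomic unit restricted to those columns and apply CASS again on a vector of $n_r$ rows, of which $s_r$ are active. The estimator $\hat S$ is then the Cartesian product of the identified active rows and columns.

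For the search phase I would assign half of the energy budget, namely $m/2$. At stage $j$ we maintain at most $2s_c$ column-groups, each consisting of $n_c/(2^{j}s_c)$ columns; the sensing vector for a group is $a\sqrt{j}$ times the indicator of all $n_r$ entries in the columns of the group. A group is declared active when its measurement exceeds $\tfrac{1}{2}a\sqrt{j}\,s_r\mu$, since a column contained in the support contributes $s_r\mu$ to the noiseless measurement. The geometric energy accounting $\sum_{j} 2s_c \cdot n_r \cdot (n_c/(2^{j}s_c)) \cdot j\,a^2 \asymp n a^2$ yields $a^2 \asymp m/n$, and the Gaussian tail bound \eqref{eqn:tail_bound} produces per-test error of order $\exp(-j s_r^2 m \mu^2 / (c\,n))$. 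Summing over levels and tests with the choice of error budgets $\varepsilon/(2s_c)^{j}$ (as in Proposition~\ref{prop:int_sample}) shows the search phase succeeds with probability at least $1-\varepsilon/2$ as soon as $\mu^2 \geq 128\,n/(s_r^2 m)\log(2s/\varepsilon)$.

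In the refinement phase we spend the remaining $m/2$ energy on a row-CASS whose sensing vectors are supported only on the at most $s_c$ identified columns, so a row-group containing an active row contributes a signal of order $b\sqrt{j}\,s_c\mu$. The analogous accounting $\sum_{j} 2s_r \cdot s_c \cdot (n_r/(2^{j}s_r)) \cdot j\,b^2 \asymp n_r s_c b^2$ gives $b^2 \asymp m/(n_r s_c)$ and a per-test error of order $\exp(-j\, s_c\, m \mu^2/(c\, n_r))$, leading to a sufficient condition of the form $\mu^2 \gtrsim n_r/(s_c m)\log(2s/\varepsilon)$. The sparsity hypothesis $n_c > s_r^2/s_c$ is precisely what is needed to ensure $n/(s_r^2 m) \geq n_r/(s_c m)$, so the search-phase requirement is the binding one and the bound from the proposition is sufficient for both phases simultaneously.

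The measurement count is immediate from the CASS structure: $2s_c$ tests per level over $\log_2(n_c/2s_c)$ levels in the search phase, and $2s_r$ tests per level over $\log_2(n_r/2s_r)$ levels in the refinement phase. The main technical obstacle is bookkeeping: passing from the probability-of-error guarantee to the expected-Hamming-distance guarantee \eqref{eqn:goal} requires setting the per-test error budgets to $\varepsilon/(2s)$ and absorbing the contribution of the rare ``search-phase fails'' event (which can contaminate at most $|S|=s$ coordinates) into the $\log(2s/\varepsilon)$ factor, exactly as is done in the proof leading to Proposition~\ref{prop:int_sample}. Apart from this accounting, the argument is a direct product of two CASS analyses.
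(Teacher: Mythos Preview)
Your proposal is correct and follows essentially the same approach as the paper's proof: a two-phase CASS procedure first on columns then on rows restricted to the identified columns, with the energy split evenly, the same geometric energy accounting yielding $a^2\asymp m/n$ and $b^2\asymp m/(n_r s_c)$, and the sparsity hypothesis $n_c>s_r^2/s_c$ used exactly to make the search-phase signal-strength requirement the binding one. The conversion to the expected-Hamming criterion by setting the per-phase error to $\varepsilon/(2s)$ is also what the paper does.
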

The sketch of the proof of Proposition~\ref{prop:samplecomp_submat} is given in Appendix~\ref{app:samplecomp_submat_proof}.

\begin{remark}
The result above guarantees essentially the same performance as Proposition~\ref{prop:submat1}.  We remark that it is possible to formulate a CASS-type algorithm whose performance would match that in Proposition~\ref{prop:submat2}, by aiming to find only one active column in the first phase.  This requires some modifications to the original CASS procedure which are rather technical.  Hence we did not include the details for the sake of space.
\end{remark}


\subsection{Sample Complexity lower bounds} \label{sec:sample_lower}

Necessary conditions for the sample complexity of compressive sensing have been studied both in the adaptive and non-adaptive setting in \cite{Samplecomp_Saligrama_2013} and \cite{Samplecomp_Saligrama_2014}.  In both works sample complexity was studied for the unstructured case of $s$-sets.  For the non-adaptive setting the authors show in Theorem~4.1 of \cite{Samplecomp_Saligrama_2013} that the sample complexity can be lower bounded by an expression that scales essentially like $s \log \frac{n}{s}$.  Furthermore they also show that the signal to noise ratio plays a role in the sample complexity of compressive sensing, and this phenomenon is also explicitly captured in their bound.  Though the setting considered in their work is slightly different from that in the present work, Theorem~4.1 of \cite{Samplecomp_Saligrama_2013} can be translated into our setting in the following manner.

\begin{lemma}[Theorem~4.1 of \cite{Samplecomp_Saligrama_2013}]\label{lemma:sample_nonadapt_lower}
Consider the class of $s$-sets, and suppose there exists a non-adaptive estimator satisfying \eqref{eqn:budget} and for which $\frac{1}{|\C |} \displaystyle{\sum_{S \in \C}} \ \P_S (\hat{S} \neq S)$ is not asymptotically bounded away from zero as $n,s \to \infty$.  Let $k(n,s)$ denote the number of measurements the estimator makes.  Then
\[
k(n,s) \geq \frac{c s \log \frac{n}{s}}{\log \left( \mu^2 \frac{m}{n} + 1 \right)} \ ,
\]
with some constant $c$.
\end{lemma}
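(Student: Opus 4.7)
The proof would proceed by a Fano-type information-theoretic argument tailored to the non-adaptive Gaussian compressive-sensing setting. The plan is to place a uniform prior $\pi$ on the class $\C$ of all $\binom{n}{s}$ possible $s$-sets, so that any estimator $\hat{S}$ satisfies
\[
\frac{1}{|\C|}\sum_{S\in\C}\P_S(\hat{S}\neq S)\ \geq\ 1 - \frac{I(S;Y^{k(n,s)}\mid A) + \log 2}{\log\binom{n}{s}}\,,
\]
by Fano's inequality, where $Y^{k} = (Y_1,\ldots,Y_k)$ denotes the collected measurements and $A$ is the (possibly random but non-adaptive) sensing matrix. The hypothesis that the average error is not asymptotically bounded away from zero then forces the right-hand side to be non-positive in the limit, i.e., $I(S;Y^{k}\mid A) \gtrsim \log\binom{n}{s}\gtrsim s\log(n/s)$, up to absolute constants.

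The central task is then to upper bound the mutual information $I(S;Y^k\mid A)$ in terms of the energy budget $m$. Conditioning on $A$, each measurement is a Gaussian channel $Y_j = \mu\langle A_j,\1_S\rangle + W_j$ with unit noise, so by the data-processing inequality and the Gaussian capacity bound,
\[
I(S;Y^k\mid A)\ \leq\ \sum_{j=1}^{k}I(\1_S;Y_j\mid A_j)\ \leq\ \sum_{j=1}^{k}\tfrac{1}{2}\log\!\left(1+\mu^2\,\Var_{S\sim\pi}\langle A_j,\1_S\rangle\right)\,.
\]
A direct calculation for $S$ uniform over $s$-sets gives $\Var_{S\sim\pi}\langle A_j,\1_S\rangle\ \leq\ \tfrac{s(n-s)}{n(n-1)}\|A_j\|^2\ \leq\ \|A_j\|^2$, and by concavity of $x\mapsto\log(1+x)$ together with the budget $\sum_j \E\|A_j\|^2 \leq m$,
\[
I(S;Y^k\mid A)\ \leq\ \tfrac{k}{2}\log\!\left(1+\tfrac{\mu^2}{k}\E\!\sum_{j=1}^{k}\|A_j\|^2\right)\ \leq\ \tfrac{k}{2}\log\!\left(1+\tfrac{\mu^2 m}{k}\right)\,.
\]
A sharper version of the variance bound, exploiting the symmetry of $\pi$ to show that the effective per-measurement signal power is at most $\mu^2 m/n$ on average, yields the form $k\log(1+\mu^2 m/n)$ in the denominator (this is exactly the refinement carried out in Theorem~4.1 of \cite{Samplecomp_Saligrama_2013}).

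Combining the Fano lower bound with this mutual-information upper bound yields
\[
k(n,s)\,\log\!\left(1+\tfrac{\mu^2 m}{n}\right)\ \geq\ c\,s\log\tfrac{n}{s}\,,
\]
for a suitable constant $c>0$ absorbing the $\log 2$ term and the $\log\binom{n}{s}\asymp s\log(n/s)$ asymptotic, from which the stated inequality follows. The main technical obstacle is obtaining the precise variance bound that produces $\mu^2 m/n$ (rather than a looser $\mu^2 m$) inside the logarithm; this requires exploiting the uniform prior structure together with the per-column energy decomposition $a_i^2=\E\sum_j A_{j,i}^2$ and the constraint $\sum_i a_i^2\leq m$, averaging carefully over $S\sim\pi$ so that each measurement sees effective signal power proportional to $(s/n)\cdot\|A_j\|^2$ before invoking concavity and the global energy budget.
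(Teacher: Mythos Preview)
The paper does not prove this lemma; it is quoted verbatim as Theorem~4.1 of \cite{Samplecomp_Saligrama_2013} and used only as a black box to benchmark the sample complexity of non-adaptive procedures. There is therefore no ``paper's own proof'' to compare your proposal against.

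That said, your Fano-plus-Gaussian-capacity outline is the standard route for results of this type and is structurally correct. One point deserves tightening. After the variance bound $\Var_{S\sim\pi}\langle A_j,\1_S\rangle \leq \tfrac{s(n-s)}{n(n-1)}\|A_j\|^2$ and Jensen's inequality, what you actually obtain is
\[
I(S;Y^k\mid A)\ \leq\ \tfrac{k}{2}\log\!\Big(1+\tfrac{\mu^2 s\, m}{n\, k}\Big),
\]
with $k$ inside the logarithm rather than the $\log(1+\mu^2 m/n)$ appearing in the statement. The two forms agree in the regime $k\gtrsim s$ (since then $\mu^2 sm/(nk)\leq \mu^2 m/n$), which is the interesting one; for $k<s$ a separate elementary argument is needed (e.g.\ that fewer than $2s$ linear measurements cannot identify an $s$-sparse support even noiselessly). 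Your final paragraph acknowledges this refinement but does not carry it out, and the appeal to ``exactly the refinement carried out in Theorem~4.1 of \cite{Samplecomp_Saligrama_2013}'' is circular, as that theorem is precisely the statement you are asked to prove.
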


This shows that the procedure presented in the previous section for $s$-sets performs as well in terms of sample complexity as the best non-adaptive procedure.  Furthermore, when estimating structured support sets, potentially less samples are enough to perform accurate estimation.  We now briefly discuss necessary conditions on sample complexity for non-adaptive estimators for the structured classes we examined before.

Consider first the case of unions of $k$ disjoint $s$-intervals.  Without giving a rigorous formal proof we argue that the number of samples required in the non-adaptive case must scale as $k \log \frac{n}{sk}$.  Let $S_1 , \dots ,S_{n/s}$ be consecutive disjoint $s$-intervals of $\{ 1,\dots, n \}$ and let
\[
\C' = \left\{ S \in \C: \ S = \bigcup_{j=1}^{k} S_{i_j}, \ i_1,\dots ,i_k \in \{ 1,\dots ,n/s\} \right\} \ ,
\]
that is unions of intervals that are constructed from  $S_1 , \dots ,S_{n/s}$.  This class roughly behaves like a class of $k$-sparse sets of a vector of dimension $n/s$, except that there is an increase in the relative sensing power arising from the fact that the building blocks of the class are $s$-sets instead of singletons.  This results in that it is possible to detect somewhat weaker signals (see Proposition~\ref{prop:int_nonadapt_lower}), but because of the weak dependence of the sample complexity bound of Lemma~\ref{lemma:sample_nonadapt_lower} on the signal to noise ratio, the scaling of the bound will still be dictated by the numerator.

The class of unions of $k$ disjoint $s$-stars is even more simple to consider.  Suppose $k=1$, and that the center of the star is given by an oracle.  The remaining problem is the estimation of an $s$-sparse set in a vector with dimension roughly $\sqrt{2n}$.  Hence the sample complexity remains essentially the same as that of the unstructured case.

Finally for the class of $s_r ,s_c$-submatrices, if an oracle provides the active columns, the problem reduces to the unions of intervals case.

This shows that the procedures presented in the previous section for structured support recovery perform as well in terms of sample complexity as the best non-adaptive procedures.  It is plausible however that adaptive procedures might outperform non-adaptive ones in terms of sample complexity.  This question was investigated in \cite{Samplecomp_Saligrama_2014}, where the authors provide a necessary condition for any adaptive algorithm to recover unstructured $s$-sets.  The number of samples required is dependent on the signal to noise ratio in this case as well.  Their results show that when the signal to noise ratio is near the boundary where accurate estimation is possible (see Proposition~\ref{prop:vanilla_lower}, and \cite{AS_Rui_2012}) the number of samples needs to scale essentially like $s$.  It is still an open question whether this bound is achievable or not.

Although not yet having a rigorous proof, the authors of this work conjecture that although some performance gain might be present, it is not substantial and the number of samples needs to scale essentially like $s \log \frac{n}{s}$ for adaptive estimators as well, when the signal magnitude is close to the estimation threshold.  The reason behind this conjecture is roughly the following.  Consider the 1-sparse case.  It can be easily seen that by taking one measurement, a fraction of the $n$ hypotheses (namely that the signal component is at coordinate $1,\dots ,n$) remains essentially indistinguishable.  Focusing the next measurement on these potential signal components, again a fraction of them will remain essentially indistinguishable.  With a bit of work this line of reasoning will, in principle, provide a lower bound on the sampling complexity.  However, formalizing this argument is challenging, because each projection does contain some faint amount of information about these ``indistinguishable'' hypotheses.  So one needs to show that these small amounts of information are negligible as a whole, even after collecting multiple projections.  Showing this requires the proof of a sharp information-contraction bound suitable for the adaptive sensing setting.  Nonetheless, the authors conjecture that because of this heuristic, a term that is logarithmic in the dimension should also be present in the sample complexity lower bounds.  In \cite{foucart_rauhut:13} a different compressed sensing setting and framework was considered.  Although this setting is not directly comparable to ours, the authors show that adaptive sensing does not further reduce the sample complexity, which also leads us to believe our conjecture is reasonable.

\vspace{0.1cm}



\section{Final remarks} \label{sec:conc}

In this work we examined the problem of recovering structured support sets through adaptive compressive measurements.  We have seen that by adaptively designing the sensing matrix it is possible to achieve performance gains over non-adaptive protocols, and that the gains can be quite dramatic for instance in the case of $s$-stars.  We have also seen that these gains can be realized by simple and practically feasible estimation procedures.

However, a complete characterization of the problem for the class of submatrices is still missing.  This could prove to be an interesting area for future research considering the practical relevance of that model in gene expression studies.  Furthermore, it remains unclear if the sample complexity of support recovery using compressive measurements can be significantly reduced by adaptively designing the rows of the sensing matrix.  Finally, the procedures of Section~\ref{sec:sample_proc} can be modified using ideas presented in \cite{CSDetection_AriasCastro_2012} to be able to handle signals with arbitrary signs and magnitudes.  Working out the details could prove to be a useful extension to this work.



\appendix

\section{Description of the procedure of Section~\ref{sec:samplecomp_star}}\label{app:samplecomp_star}

We begin with a search phase to find the approximate location of the support.  Again we consider the subsets $\vec{P}^{(i)}, \ i=1,\dots ,p$, where $\vec{P}^{(i)}$ contains all the components whose corresponding edges lie on the vertex $v_i$.  Our goal is to find the center of the $s$-star.  We begin by forming 4 groups $\vec{I}^{(1)}_1 ,\dots ,\vec{I}^{(1)}_4$, where each of them is a union of $p/4$ different $\vec{P}^{(i)}$, and no subset $\vec{P}^{(i)}$ is contained in more than one group.  We then take one measurement per group
\[
Y^{(1)}_i = <a \1_{\vec{I}^{(1)}_i} , \vec{x}> + W^{(1)}_i , \ i=1,\dots ,4,
\]
where $W^{(1)}_i$ i.i.d.  standard normals and $a>0$.  Large measurements should correspond to groups containing a lot of signal components, and particularly the one containing the center of the star.  However, because of the structure of the support and the fact that these groups are not disjoint, large observations may also correspond to groups not containing the center of the star.  Therefore instead of performing hypothesis tests we choose the two highest observations, and consider the groups corresponding to those.  Once we have these groups, we split each in half in the sense that half of the $\vec{P}^{(i)}$ in a given group will form one new group, and the other half will form another new group.  This way we end up with 4 groups, again not disjoint, and do the same as before.  Let the groups in step $j$ be denoted by $\vec{I}^{(j)}_1 ,\dots ,\vec{I}^{(j)}_4$.  The measurements we collect are
\[
Y^{(j)}_i = <a \sqrt{j} \1_{\vec{I}^{(j)}_i} , \vec{x}> + W^{(j)}_i , \ j=1,\dots ,\log_2 \frac{p}{4}; i=1,\dots ,4 \ .
\]
In the final step $j= \log_2 \frac{p}{4}$ each group consists of a single $\vec{P}^{(i)}$.  The output set of the search phase $\vec{P}$ will consist of the union of those two groups for which the final observation is largest.

First we specify the parameter $a$ so as to ensure we don't use more than half of our measurement budget.  Each $\vec{I}^{(j)}_i$ contains at most $(p-1) \frac{p}{2^{j+1}} = n /2^j$ components $i=1,\dots ,4$, and $j=1,\dots ,\log_2 \frac{p}{4}$, hence
\[
\| A_{search} \|_F^2 \leq \sum_{j=1}^{\log_2 \frac{p}{4}} \frac{n}{2^{j-2}} j a^2 \leq 8 n a^2 \ .
\]
Therefore $a =\sqrt{\frac{m}{16 n}}$ ensures we use at most $m/2$ energy in the search phase.

Now we need to show that $S \subset \vec{P}$ with high probability.  Without loss of generality suppose that $\vec{I}^{(j)}_1 ,\dots ,\vec{I}^{(j)}_4$ are indexed such that the center of the star is in group $\vec{I}^{(j)}_1$, and for the number of signal components in $\vec{I}^{(j)}_i$ denoted by $N^{(j)}_i$ we have $N^{(j)}_i \geq N^{(j)}_{i+1}$.  Hence $\vec{I}^{(j)}_1$ contains exactly $s$ components, and because $\displaystyle{\sum_{i=2}^{4}} \ N^{(j)}_i \leq s$ we know $N^{(j)}_3 \leq s/2$.  Using this we conclude that in each step $j=1,\dots ,\log_2 \frac{p}{4}$ the probability that $Y^{(j)}_1 < \max \{ Y^{(j)}_3 , Y^{(j)}_4 \}$ can be bounded from above with \eqref{eqn:tail_bound} by
\[
3 \cdot \frac{1}{2} e^{- \frac{j s^2 m \mu^2}{392 n}} \ .
\]
From this we get that whenever $\mu \geq \sqrt{\frac{392 n}{s^2 m} \log \frac{9}{2 \varepsilon}}$ we have
\[
\P_S (S \nsubseteq \vec{P}) \leq \sum_{j=1}^{\log_2 \frac{p}{4}} \left( \frac{\varepsilon}{3} \right)^j \leq \varepsilon /2 \ .
\]
By construction we make $4 \log_2 \frac{p}{4}$ observations in this phase, and also $|\vec{P}| \leq 2 (p-1)$.

In the search phase we can directly apply the CASS procedure on $\vec{P}$ to estimate the support.  Since $\sqrt{2n} > p-1$ we know that whenever $\mu \geq \sqrt{\frac{64 \sqrt{2n}}{m} \log \frac{4s}{\varepsilon}}$ the probability of error is at most $\varepsilon /2$, and we take at most $2s \log_2 \frac{p-1}{s}$ measurements.  When considering $\E_S (|\hat{S} \triangle S|)$ as the error metric one can set the probability of error to $\varepsilon /2s$ and use the procedure above.

\section{Sketch proof of Proposition~\ref{prop:samplecomp_submat}}\label{app:samplecomp_submat_proof}

We use half the energy for the search phase, and half for the refinement phase.  In step $j$ of the search phase the groups $\vec{I}^{(j)}$ contain $n/2^j s_c$ components and there are at most $2s_c$ components.  Hence the energy used is at most
\[
\sum_{j=1}^{\log_2 \frac{n_c}{2s_c}} 2s_c \frac{n}{2^j s_c} j a^2 = 4 n a^2 \ .
\]
Thus $a = \sqrt{\frac{m}{8 n}}$.  This means that for the probability of error we have
\[
\sum_{j=1}^{\log_2 \frac{n_c}{2s_c}} 2s_c \frac{1}{2} e^{- \frac{s_r^2 j m \mu^2}{64 n}} \ ,
\]
so whenever $\mu \geq \sqrt{\frac{64 n}{s_r^2 m} \log \frac{2 s_c}{\varepsilon}}$ the probability of error is at most $\varepsilon /2$.

In the refinement phase the energy used is
\[
\sum_{j=1}^{\log_2 \frac{n_r}{2s_r}} 2s_r \frac{n_r s_c}{2^j s_r} j a^2 = 4 n_r s_c a^2 \ ,
\]
hence $a = \sqrt{\frac{m}{8 n_r s_c}}$.  Therefore the probability of error is at most
\[
\sum_{j=1}^{\log_2 \frac{n_r}{2s_r}} 2s_r \frac{1}{2} e^{- \frac{s_c j m \mu^2}{64 n_r}} \ ,
\]
which means whenever $\mu \geq \sqrt{\frac{64 n_r}{s_c m} \log \frac{2 s_r}{\varepsilon}}$ the probability of error is at most $\varepsilon /2$.

Considering the expected Hamming-distance as the error metric, we can use the procedure above with probability of error set to $\varepsilon /2s$.

\section{Removing the expectation from the energy constraint \eqref{eqn:budget}}\label{app:no_expectations}

We now investigate what difference would it make if we considered a more demanding energy constraint by removing the expectation from \eqref{eqn:budget}.  That is, we now wish to consider algorithms that satisfy
\[
\sup_{S \in \C} \ \| A \|^2_F \ =\ \E_S\left(\sum_j \|A_j\|^2 \right)\ \leq m\ .
\]

First, note that all the lower bounds remain valid with the latter constraint as well, since the constraint $\E (\| A \|_F^2 ) \leq m$ is more forgiving then $\| A \|_F^2 \leq m$.

Considering the procedures in this paper, we begin by noting that the CASS procedure and thus all procedures in Section~4.1 that are derived from it already satisfy $\| A \|_F^2 \leq m$.  This comes at a price of an increase in the constants for the $s$-sets and possibly for other classes as well, though we can only make a rigorous claim for the unstructured case as we don't put an effort into finding the correct constants for structured classes.  All that is left is to address the procedures in Section~3.1.  Since the basis of these procedures is the SLRT described at the beginning of Section~3.1 used for the $s$-sets (Proposition~\ref{prop:vanilla1}), we will only discuss this in detail as results for all other procedures follow similarly.

As a reminder, the procedure for $s$-sets consists of independently performing a Sequential Likelihood-ratio test (SLRT) for each component to assess whether that component is zero or not.  To carry out the test for $\vec{x}_i$ we take
\[
N_i = \inf \left\{ n \in \N:\ \sum_{j=1}^n \log \frac{\d \P_0 (Y_{i,j})}{\d \P_1 (Y_{i,j})} \notin (l,u) \right\}
\]
measurements, where $\log \tfrac{\beta}{1-\alpha} =l <0< u=\log \tfrac{1-\beta}{\alpha}$ are the lower and upper stopping boundaries.  Lemma~1 establishes an upper bound on the expectation of $N_i$ under the null and alternative respectively.  Suppose $H_0$ is true.  In this case the upper bound on the expected energy used by the test is $t_0:= \tfrac{2}{\mu^2} \log \tfrac{2s}{\varepsilon}$ since we set $\beta = \tfrac{\varepsilon}{2s}$.  For our purposes, we need more than a bound on the expectation of $N_i$, and it is enough to show
\begin{equation}\label{eqn:x}
\P \left( \sum_{i\notin S} N_i > c(n-s)t_0 \right) \leq c'\varepsilon \ ,
\end{equation}
with some universal constants $c,c'$.  If this (and a comparable result for $i\in S$) were true, then a union bound would give that the probability that the procedure uses more then $c m$ energy is at most $2c' \varepsilon$.  One then could construct a similar procedure as before with the exception that it is forced to stop once the precision budget is exhausted.  By the previous result this happens with probability proportional to $\varepsilon$.  Hence the minimum signal strength required by a procedure satisfying $\| A \|_F^2 \leq m$ for support recovery would still be on the same order as before, only the constants would need to be adjusted.

To show the result above we need a concentration inequality.  As a start, we show a simple tail bound for $N_i$ under the null.
\begin{align*}
\P_0 (N_i >ct_0) & \leq \P_0 \left( \sum_{j=1}^{ct_0} \log \frac{\d \P_0 (Y_{i,j})}{\d \P_1 (Y_{i,j})} > l \right) \\
& \leq \P_0 \left( \sum_{j=1}^{ct_0} \log \frac{\d \P_0 (Y_{i,j})}{\d \P_1 (Y_{i,j})} > \log \beta \right) \\
& \leq \frac{1}{2} \left( \frac{\varepsilon}{2s} \right)^{\tfrac{(c-1)^2}{2c}} \ .
\end{align*}
when $c>2$.  We continue by using the Craig-Bernstein inequality \cite{Craig_1933} that states that whenever the independent random variables $U_1,\dots ,U_n$ satisfy the moment condition
\[
\E \left( |U_i - \E (U_i)|^k \right) \leq \frac{\Var (U_i)}{2} k! h^{k-2},\ i=1,\dots ,n\ ,
\]
with some $h>0$ then we have
\[
\P \left( \frac{1}{n} \sum_{i=1}^n (U_i - \E (U_i)) \leq \frac{z}{n\delta} + \frac{n\delta \Var (\tfrac{1}{n} \sum_{i=1}^n U_i)}{2(1-c)} \right) \leq e^{-z} \ ,
\]
for $0<h\delta \leq c<1$ and $z>0$.  We thus need to refine the calculations above to get a general moment bound for $N_i$ and then we will use the inequality above with $c=1/2, \delta = 1/2h$ and an appropriate $z$.  We start with the moment condition.
\begin{align*}
\E (N_i^k) & = \sum_{j=1}^\infty j^k \P (N_i = j) \\
& \leq \sum_{c=1}^\infty (ct_0)^k \P ((c-1)t_0 < N_i \leq ct_0) \\
& \leq \sum_{c=1}^\infty (ct_0)^k \P ((c-1)t_0 < N_i) \\
& \leq t_0^k \left( 2^k + \frac{1}{\varepsilon} \sum_{c=3}^\infty c^k \varepsilon^{c/2} \right) \ ,
\end{align*}
using the tail bound on $N_i$ (also using $\varepsilon \leq 1/2$).  We upper bound the sum in the last expression by an integral.
\begin{align*}
\sum_{c=3}^\infty c^k 2^{-c/2} & \leq \int_0^\infty (x+1)^k \sqrt{\varepsilon}^x \d x \\
& = \left[ \frac{2(x+1)^k \sqrt{\varepsilon}^x}{\log \varepsilon} \right]_0^\infty - \frac{2k}{\log \varepsilon} \int_0^\infty (x+1)^{k-1} \sqrt{\varepsilon}^x \d x \\
& = \frac{2}{\log \tfrac{1}{\varepsilon}} + \frac{2k}{\log \tfrac{1}{\varepsilon}} \int_0^\infty (x+1)^{k-1} \sqrt{\varepsilon}^x \d x \\
& = \dots \\
& = \sum_{l=0}^k \left( \frac{2}{\log \tfrac{1}{\varepsilon}} \right)^{l+1} \frac{k!}{(k-l)!} \\
& \leq k! \sum_{l=0}^\infty \left( \frac{2}{\log \tfrac{1}{\varepsilon}} \right)^l \leq k! \frac{\log \tfrac{1}{\varepsilon}}{\log \tfrac{1}{\varepsilon} -2} \ .
\end{align*}
Plugging this back yields
\[
\E (N_i^k) \leq t_0^k (2^k + \frac{1}{\varepsilon} k! \frac{\log \tfrac{1}{\varepsilon}}{\log \tfrac{1}{\varepsilon} -2}) \leq k! (K t_0)^k \ .
\]
Since the variance is on the order of $t_0^2$ (say let $\Var (N_i) = K" t_0^2$) this shows that the moment condition above is satisfied with $h= K' t_0$, where $K'$ is some constant.  Hence taking $z=\log \tfrac{1}{\varepsilon}$ the Craig-Bernstein inequality yields
\[
\P \left( \sum_{i\notin S} (N_i - \E (N_i)) \leq 2K' \log \frac{1}{\varepsilon} t_0 + \frac{K"}{2 K'}(n-s)t_0 \right) \leq \varepsilon \ .
\]
Unless $\varepsilon$ is very small (less than $e^{-(n-s)}$), the expression on the left side of the inequality above is upper bounded by $c(n-s)t_0$ and thus we have shown \eqref{eqn:x}.


\bibliographystyle{acm}
\bibliography{CS_references}

\end{document}